\newlength{\bibitemsep}\setlength{\bibitemsep}{.25\baselineskip plus .05\baselineskip minus .05\baselineskip}
\newlength{\bibparskip}\setlength{\bibparskip}{0pt}
\let\oldthebibliography\thebibliography
\renewcommand\thebibliography[1]{%
  \oldthebibliography{#1}%
  \setlength{\parskip}{\bibitemsep}%
  \setlength{\itemsep}{\bibparskip}%
}
\apptocmd{\thebibliography}{\raggedright}{}{}
\newcommand{\bs}{\boldsymbol}
\newcommand{\mb}{\mathbf}
\newcommand{\mc}{\mathcal}
\newcommand{\mf}{\mathfrak}
\newcommand{\mr}{\mathrm}
\declaretheoremstyle[spaceabove=6pt, spacebelo=6pt, headfont=\scshape, bodyfont=\itshape, postheadspace=.5em]{thm}
\declaretheoremstyle[spaceabove=6pt, spacebelo=6pt, headfont=\scshape, bodyfont=\normalfont, postheadspace=.5em]{defn}
\theoremstyle{thm}
\newtheorem{thm}{Theorem}
\numberwithin{thm}{section}
\newtheorem{cor}[thm]{Corollary}
\newtheorem{lemma}[thm]{Lemma}
\newtheorem{prop}[thm]{Proposition}
\newtheorem*{thmA}{Theorem \ref{hodge.7}}
\newtheorem*{thmB}{Theorem \ref{rect.6}}
\newtheorem*{corC}{Corollary \ref{int.9}} 
\theoremstyle{defn}
\newtheorem{defn}[thm]{Definition} 
\newtheorem{ex}[thm]{Example}
\newtheorem*{motivation}{Motivation}
\newtheorem{notation}[thm]{Notation}
\newtheorem{rmk}[thm]{Remark}
\newtheorem*{summary}{Summary}
\numberwithin{equation}{thm}
\setlist[enumerate,1]{label=\textup{(\textit{\roman*})},itemsep=.125\baselineskip,parsep=0pt,topsep=.125\baselineskip,listparindent=\parindent,itemindent=\parindent,wide=\parindent}
\setlist[enumerate,2]{label=\textup{(\alph*)},itemsep=.125\baselineskip,parsep=0pt,topsep=.125\baselineskip,listparindent=\parindent,leftmargin=3\parindent}
\setlist[enumerate,3]{itemsep=.125\baselineskip,parsep=0pt,topsep=.125\baselineskip,listparindent=\parindent,leftmargin=\parindent}
\setlist[itemize]{itemsep=.125\baselineskip,parsep=0pt,topsep=.125\baselineskip,listparindent=\parindent}
\DeclarePairedDelimiter{\delimpar}{(}{)}
\DeclarePairedDelimiter{\delimbrk}{[}{]}
\DeclarePairedDelimiter{\delimbrc}{\{}{\}}
\DeclarePairedDelimiter{\delimbar}{\lvert}{\rvert}
\DeclarePairedDelimiter{\delimang}{\langle}{\rangle}
\newcommand{\prns}[2][0]{%
  \ifcase#1\relax
    \delimpar{#2}\or
    \delimpar[\big]{#2}\or
    \delimpar[\Big]{#2}\or
    \delimpar[\bigg]{#2}\or
    \delimpar[\Bigg]{#2}
  \else
    \delimpar*{#2}
  \fi}
\newcommand{\brk}[2][0]{%
  \ifcase#1\relax
    \delimbrk{#2}\or
    \delimbrk[\big]{#2}\or
    \delimbrk[\Big]{#2}\or
    \delimbrk[\bigg]{#2}\or
    \delimbrk[\Bigg]{#2}
  \else
    \delimbrk*{#2}
  \fi}
\newcommand{\brc}[2][0]{%
  \ifcase#1\relax
    \delimbrc{#2}\or
    \delimbrc[\big]{#2}\or
    \delimbrc[\Big]{#2}\or
    \delimbrc[\bigg]{#2}\or
    \delimbrc[\Bigg]{#2}
  \else
    \delimbrc*{#2}
  \fi}
\newcommand{\bars}[2][0]{%
  \ifcase#1\relax
    \delimbar{#2}\or
    \delimbar[\big]{#2}\or
    \delimbar[\Big]{#2}\or
    \delimbar[\bigg]{#2}\or
    \delimbar[\Bigg]{#2}
  \else
    \delimbar*{#2}
  \fi}
\newcommand{\ang}[2][0]{%
  \ifcase#1\relax
    \delimang{#2}\or
    \delimang[\big]{#2}\or
    \delimang[\Big]{#2}\or
    \delimang[\bigg]{#2}\or
    \delimang[\Bigg]{#2}
  \else
    \delimang*{#2}
  \fi}
\newcommand{\scr}[3][0]{\ifblank{#2}{\mathstrut{\vphantom{\prns[#1]{}}}_{#3}}{\ifblank{#3}{\mathstrut{\vphantom{\prns[#1]{}}}^{#2}}{\mathstrut{\vphantom{\prns[#1]{}}}^{#2}_{#3}}}}
\def\scripts#1#2#3{\def\scripts@{\prns[#1]{#3}}\def\scripts@@{#2}\def\scripts@@@{#2}\@ifnextchar^\@sup\@nsup}
\def\@sup^#1{\def\scripts@@@{\scripts@@^{#1}}\@ifnextchar_\@supsub{\scripts@@@\scripts@}}
\def\@supsub_#1{\scripts@@@_{#1}\scripts@}
\def\@nsup{\@ifnextchar_{\@sub}{\scripts@@@\scripts@}}
\def\@sub_#1{\def\scripts@@@{\scripts@@_{#1}}\@ifnextchar^\@subsup{\scripts@@@\scripts@}}
\def\@subsup^#1{\scripts@@@^{#1}\scripts@}
\renewcommand{\1}[1]{\mathbf{1}_{#1}}
\newcommand{\ab}{\operatorname{\mathcal{A}b}}
\newcommand{\alg}[2][0]{\scripts{#1}{\operatorname{\mathcal{A}lg}}{#2}}
\newcommand{\altdgnerve}[2][0]{\operatorname{\overline{\mathfrak{N}}}_{\mathrm{dg}}\ifblank{#2}{}{\prns[#1]{#2}}}
\newcommand{\calg}[2][0]{\scripts{#1}{\operatorname{\mathcal{CA}lg}}{#2}}
\newcommand{\cat}{\operatorname{\mathcal{C}at}}
\newcommand{\CAT}{\operatorname{\mathcal{CAT}}}
\newcommand{\colim}{\operatorname*{colim}}
\newcommand{\cpx}[2][0]{\scripts{#1}{\operatorname{\mathcal{C}px}}{#2}}
\renewcommand{\d}{\operatorname{d}}
\renewcommand{\D}[2][0]{\scripts{#1}{\operatorname{\mathcal{D}}}{#2}}
\newcommand{\tD}[2][0]{\scripts{#1}{\operatorname{\widetilde{\mathcal{D}}}}{#2}}
\newcommand{\DA}[2][0]{\scripts{#1}{\operatorname{\mathcal{DA}}}{#2}}
\newcommand{\DGCAT}{\operatorname{\mathcal{DGCAT}}}
\newcommand{\dgnerve}[2][0]{\operatorname{\mathfrak{N}}_{\mathrm{dg}}\ifblank{#2}{}{\prns[#1]{#2}}}
\renewcommand{\dim}[2][0]{\operatorname{dim}\ifblank{#2}{}{\prns[#1]{#2}}}
\newcommand{\ext}[3][0]{\ifblank{#2}{\operatorname{ext}}{\scripts{#1}{\operatorname{ext}}{#2,\, #3}}}
\newcommand{\fin}{\operatorname{\mathcal{F}in}_*}
\newcommand{\fun}[3][0]{\ifblank{#2}{\operatorname{\mathcal{F}un}}{\scripts{#1}{\operatorname{\mathcal{F}un}}{#2,\, #3}}}
\newcommand{\gr}[2][0]{\operatorname{gr}\ifblank{#2}{}{\scripts{#1}{}{#2}}}
\newcommand{\h}{\mathfrak{h}}
\renewcommand{\H}[2][0]{\scripts{#1}{\operatorname{\mathcal{H}}}{#2}}
\newcommand{\ho}[2][0]{\ifblank{#2}{\operatorname{ho}}{\scripts{#1}{\operatorname{ho}}{#2}}}
\renewcommand{\hom}[3][0]{\ifblank{#2}{\operatorname{hom}}{\scripts{#1}{\operatorname{hom}}{#2,\, #3}}}
\newcommand{\id}{\operatorname{id}}
\newcommand{\ind}[2][0]{\operatorname{\mathcal{I}nd}\ifblank{#2}{}{\prns[#1]{#2}}}
\newcommand{\inftyone}{\ensuremath{(\infty,1)}}
\newcommand{\isom}{\stackrel{\hspace{-.1em}\raisebox{0.1em}{\smash{\ensuremath{\sim}}}}{\smash{\ensuremath{\to}}}}
\newcommand{\kk}{\boldsymbol{\kappa}}
\newcommand{\K}[2][0]{\scripts{#1}{\operatorname{\mathcal{K}}}{#2}}
\newcommand{\KK}{\mathbf{K}}
\newcommand{\map}[3][0]{\ifblank{#2}{\operatorname{map}}{\scripts{#1}{\operatorname{map}}{#2,\, #3}}}
\newcommand{\mhc}{\operatorname{\mathcal{MHC}}}
\newcommand{\MHC}{\operatorname{\mathbf{MHC}}}
\newcommand{\mhm}[2][0]{\operatorname{\mathcal{MHM}}\scripts{#1}{}{#2}}
\newcommand{\mhs}{\operatorname{\mathcal{MHS}}}
\renewcommand{\mod}[2][0]{\ifblank{#2}{\operatorname{\mathcal{M}od}}{\scripts{#1}{\operatorname{\mathcal{M}od}}{#2}}}
\newcommand{\nerve}[2][0]{\operatorname{\mathfrak{N}}\ifblank{#2}{}{\prns[#1]{#2}}}
\newcommand{\on}{\operatorname}
\newcommand{\op}{^{\mathrm{op}}}
\newcommand{\pr}{\operatorname{\mathcal{P}r}}
\newcommand{\psh}[3][0]{\operatorname{\mathcal{PS}h}\ifblank{#2}{}{\prns[#1]{#2,\, #3}}}
\newcommand{\qcat}{\operatorname{\mathcal{QC}at}}
\newcommand{\QCAT}{\operatorname{\mathcal{QCAT}}}
\newcommand{\qcoh}[2][0]{\operatorname{\mathcal{QC}oh}\ifblank{#1}{}{\prns[#1]{#2}}}
\newcommand{\resp}[1]{\textup{(}resp.\@ #1\textup{)}}
\newcommand{\sch}[2][]{\mathrm{\mathcal{S}ch}^{\textup{#1}}\ifblank{#2}{}{_{/#2}}}
\newcommand{\set}{\operatorname{\mathcal{S}et}}
\newcommand{\sh}[3][0]{\ifblank{#2}{\operatorname{\mathcal{S}h}}{\scripts{#1}{\operatorname{\mathcal{S}h}}{#2,\, #3}}}
\newcommand{\SH}[2][0]{\operatorname{\mathcal{SH}}\ifblank{#2}{}{\scripts{#1}{}{#2}}}
\newcommand{\sm}[2][]{\mathrm{\mathcal{S}m}^{\textup{#1}}\ifblank{#2}{}{_{/#2}}}
\newcommand{\snerve}[2][0]{\operatorname{\mathfrak{N}_{\Delta}}\ifblank{#2}{}{\prns[#1]{#2}}}
\newcommand{\spc}[2][0]{\ifblank{#2}{\operatorname{\mathcal{S}pc}}{\scripts{#1}{\operatorname{\mathcal{S}pc}}{#2}}}
\newcommand{\spec}[2][0]{\operatorname{Spec}\prns[#1]{#2}}
\newcommand{\sset}{\operatorname{\mathcal{S}et}_{\Delta}}
\newcommand{\sym}[2][0]{\operatorname{sym}\ifblank{#2}{}{\prns[#1]{#2}}}
\newcommand{\trun}{\mathfrak{t}}
\newcommand{\umor}[3][0]{\ifblank{#2}{\mathbf{mor}}{\scripts{#1}{\mathbf{mor}}{#2,\, #3}}}
\newcommand{\yon}[3][0]{\operatorname{\mathbf{y}}_{#2}{\ifblank{#3}{}{\prns[#1]{#3}}}}
\title{\textsc{Rectification of Deligne's mixed Hodge structures}}
\author{Brad Drew}
\date{}
\begin{document}
\maketitle

\begin{abstract}
\noindent
We promote Beilinson's triangulated equivalence between the bounded derived category of rational polarizable mixed Hodge structures and the derived category of rational polarizable mixed Hodge complexes to an equivalence of symmetric monoidal quasi-categories.
We use this equivalence to construct a presheaf of commutative differential graded algebras in the ind-completion of the category of rational mixed Hodge structures which computes Deligne's mixed Hodge structure on the rational Betti cohomology of $\mb{C}$-schemes of finite type.
This leads to a presheaf\textemdash in the quasi-categorical sense\textemdash of $\mb{E}_{\infty}$-algebras computing integral mixed Hodge structures.
\end{abstract}

\tableofcontents

\subsection*{Introduction}

\noindent
Let $\sch[ft]{\mb{C}}$ denote the category of $\mb{C}$-schemes of finite type and let $X \in \sch[ft]{\mb{C}}$.
The derived category $\mr{D}\prns{X\prns{\mb{C}}^{\mr{an}}, \mb{Q}}$ of analytic sheaves of $\mb{Q}$-modules, M.~Saito's derived category $\mr{D}^{\mr{b}}\mhm{X}$ of mixed Hodge modules (\cite{Saito_modules-de-hodge, Saito_mixed-hodge}) and the $\mb{P}^1$-stable $\mb{A}^1$-homotopy category $\mc{SH}\prns{X}$ of F.~Morel and V.~Voevodsky, as developed by J.~Ayoub (\cite{Ayoub_six-operationsI, Ayoub_six-operationsII}), admit Grothendieck six-functor formalisms.
They are related by symmetric monoidal triangulated functors $\omega^*_X: \mr{D}^{\mr{b}}\mhm{X} \to \mr{D}\prns{X\prns{\mb{C}}^{\mr{an}}, \mb{Q}}$ (\cite{Saito_mixed-hodge}) and $\varrho^*_{\mr{Betti},X}: \mc{SH}\prns{X} \to \mr{D}\prns{X\prns{\mb{C}}^{\mr{an}}, \mb{Q}}$ (\cite{Ayoub_operations-de-Grothendieck}) compatible with Grothendieck's six functors.
One expects that the restriction of $\varrho^*_{\mr{Betti},X}$ to the full subcategory $\mc{SH}\prns{X}_{\aleph_0} \subseteq \mc{SH}\prns{X}$ spanned by the $\aleph_0$-presentable objects actually factors as the composite of a symmetric monoidal triangulated \emph{Hodge realization} functor $\varrho^*_{\mr{Hdg},X}: \mc{SH}\prns{X}_{\aleph_0} \to \mr{D}^{\mr{b}}\mhm{X}$ and $\omega^*_X$, and that this functor $\varrho^*_{\mr{Hdg},X}$ is itself compatible with Grothendieck's six functors.

Higher algebra, as developed in \cite{Lurie_higher-algebra}, offers an elegant approach to this open problem, consisting of two main ingredients:
$(i)$ a sufficiently refined construction of $\varrho^*_{\mr{Hdg},X}$ for the base case $X = \spec{\mb{C}}$, and $(ii)$ some general results concerning Grothendieck's six-functor formalism in the context of stable symmetric monoidal quasi-categories.
Our goal here is to address $(i)$ as follows, deferring discussion of $(ii)$ and the question of constructing Hodge realization functors over more general bases $X$ to a forthcoming preprint.

For $\bs{\Lambda} \in \brc{\mb{Z}, \mb{Q}}$, let $\mhs^{\mr{p}}_{\bs{\Lambda}}$ denote the category of polarizable mixed Hodge $\bs{\Lambda}$-structures and $\on{gr}\mhs^{\mr{p}}_{\bs{\Lambda}}$ the category of $\mb{Z}$-graded objects thereof.
We rectify P.~Deligne's functor $\mr{H}^{\bullet}_{\mr{Betti}}\prns{-, \mb{Z}}: \prns{\sch[ft]{\mb{C}}}\op \to \on{gr}\mhs^{\mr{p}}_{\mb{Z}}$, assigning to $X$ the graded polarizable mixed Hodge $\mb{Z}$-structure on its Betti cohomology (\cite{Deligne_hodgeIII}), to a presheaf of with values in a quasi-category of $\mb{E}_{\infty}$-algebras in the derived quasi-category of $\mhs^{\mr{p}}_{\bs{\Lambda}}$.
With rational coefficients, we further rectify this to a strict presheaf of commutative differential graded algebras in the ind-completion $\ind{\mhs^{\mr{p}}_{\mb{Q}}}$.

The issue of rectification of mixed Hodge structures considered here is of interest independent from the aforementioned motivic questions.
Indeed, the question has a long history: cf.~\cite[8.15]{Navarro-Aznar_hodge-deligne}, \cite[2.3.6]{Guillen-Navarro_critere-d'extension}, \cite[4.4]{Cirici-Guillen_E_1-formality}.
The fundamental component in such rectification results is always A.~Beilinson's equivalence of triangulated categories $\mr{D}^{\mr{b}}\prns{\mhs^{\mr{p}}_{\mb{Z}}} \simeq \mr{D}^{\mr{b}}_{\mc{H}^{\mr{p}}, \mb{Z}}$ from the bounded derived category of $\mhs^{\mr{p}}_{\mb{Z}}$ to the derived category of polarizable mixed Hodge $\mb{Z}$-complexes (\cite[3.11]{Beilinson_absolute-hodge}).
Below, we establish the following refinement of this equivalence with rational coefficients.

\begin{thmA}
Beilinson's equivalence $\mr{D}^{\mr{b}}\prns{\mhs^{\mr{p}}_{\mb{Q}}} \simeq \mr{D}^{\mr{b}}_{\mc{H}^{\mr{p}},\mb{Q}}$ can be promoted to an equivalence of symmetric monoidal quasi-categories.
\end{thmA}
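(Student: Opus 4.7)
The plan is to present both sides of Beilinson's equivalence as underlying quasi-categories of explicit symmetric monoidal quasi-categorical enhancements, to construct a strictly symmetric monoidal comparison functor between these enhancements, and to conclude by invoking the principle that a symmetric monoidal functor of symmetric monoidal quasi-categories is an equivalence in $\calg{\qcat}$ if and only if its underlying morphism is an equivalence in $\qcat$.

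For the left-hand side, the essential point is that $\mhs^{\mr{p}}_{\mb{Q}}$ is a $\mb{Q}$-linear symmetric monoidal abelian category, since polarizability is stable under tensor product (polarizable pure Hodge structures form a semisimple Tannakian abelian category). Consequently, the pretriangulated dg category $\on{Ch}^{\mr{b}}\prns{\mhs^{\mr{p}}_{\mb{Q}}}$ of bounded chain complexes inherits a symmetric monoidal dg structure in which quasi-isomorphisms are closed under the tensor product, and inverting quasi-isomorphisms in its dg nerve yields a symmetric monoidal quasi-category whose homotopy category is $\mr{D}^{\mr{b}}\prns{\mhs^{\mr{p}}_{\mb{Q}}}$. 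For the right-hand side, I would build a symmetric monoidal dg enhancement of the derived category of polarizable mixed Hodge $\mb{Q}$-complexes out of diagrams of (bi)filtered dg complexes satisfying Beilinson's strictness axiom, with a tensor product computed componentwise using a dg model of Deligne's d\'ecalage to preserve the mixed Hodge complex condition; the quasi-isomorphisms again form a symmetric monoidal class, and the localization is again symmetric monoidal. The comparison functor is the tautological embedding sending a bounded chain complex $K^{\bullet}$ of polarizable mixed Hodge structures to its underlying tuple of (bi)filtered complexes, with filtrations induced degreewise from the weight and Hodge filtrations; it is manifestly strictly symmetric monoidal at the dg level.

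The main obstacle I anticipate is controlling the symmetric monoidal structure on the dg enhancement of the target, because the naive tensor product of mixed Hodge complexes need not preserve the strictness axiom on the weight filtration: either one must work with sufficiently cofibrant objects, or one must apply d\'ecalage before tensoring, and either way these operations must be set up coherently to yield a strictly associative and commutative product at the dg (or symmetric monoidal model) level. Once such an enhancement is available, the comparison functor induces Beilinson's equivalence on homotopy categories; since both sides are stable quasi-categories and an exact functor of stable quasi-categories is an equivalence if and only if its action on homotopy categories is an equivalence, the underlying morphism of quasi-categories is an equivalence, whence the promotion to an equivalence in $\calg{\qcat}$ is automatic.
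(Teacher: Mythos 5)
Your overall architecture coincides with the paper's: build symmetric monoidal dg enhancements of both sides, compare them by a dg functor, pass to the localizations by acyclics, use Beilinson's theorem \cite[3.11]{Beilinson_absolute-hodge} plus the fact that an exact functor of stable qcategories is an equivalence if and only if it is one on homotopy categories, and finally invoke \cite[2.1.3.8]{Lurie_higher-algebra} to upgrade an underlying equivalence to a symmetric monoidal one. However, the step you yourself flag as ``the main obstacle'' is precisely the mathematical content of the theorem, and your proposal leaves it unresolved. The paper's resolution is that no d\'ecalage and no cofibrancy device is needed: working with Beilinson's weight-shifted definition of mixed Hodge complexes (\ref{hodge.1}, i.e.\ \cite[3.9]{Beilinson_absolute-hodge} rather than 3.2, so that $\h^n\gr{K_1}^W_k$ is pure of weight $k+n$), the naive componentwise tensor product of (bi)filtered complexes, with the usual convolution of filtrations, sends a pair of polarizable mixed Hodge complexes to another one; the strictness and purity axioms are preserved by a filtered K\"unneth argument together with the closure of polarizable pure Hodge structures under tensor product (\ref{hodge.2}$(ii)$, citing \cite[3.20]{Peters-Steenbrink_mixed-hodge}). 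This yields a strictly associative and commutative symmetric monoidal dg structure on $\MHC^{\mr{p}}_{\KK}$ directly. Your alternative\textemdash applying d\'ecalage before tensoring, or restricting to ``sufficiently cofibrant'' objects\textemdash would break strict coherence at the dg level, and you give no mechanism for restoring it; as written, the symmetric monoidal enhancement of the target is simply not constructed, so the proof does not go through.

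Two smaller corrections. First, your comparison functor is not the ``manifestly strictly symmetric monoidal'' tautological embedding: to land in the weight-shifted category one must replace $W$ by $\tilde{W}_k\prns{K^n} := W_{k+n}\prns{K^n}$ (\ref{hodge.3}), and one must then check by a reindexing computation that this shifted assignment is still compatible with the convolution filtrations on tensor products (\ref{hodge.4}); this is short but not vacuous. Second, the passage from the symmetric monoidal dg category to a symmetric monoidal stable qcategory, and the fact that the quotient by acyclics inherits the monoidal structure, are not automatic: the paper uses the $\mb{Q}$-linearity (characteristic zero) via \cite[2.5, 2.6]{Drew_verdier-quotients}, and verifies that the tensor product preserves acyclics separately in each variable by means of the symmetric monoidal forgetful functor $\prns{K_{\cdot},F,W} \mapsto K_1$ to complexes of $\KK$-modules. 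Your appeal to ``quasi-isomorphisms form a symmetric monoidal class, so the localization is symmetric monoidal'' is the right principle, but it only becomes available once the monoidal dg enhancement of $\MHC^{\mr{p}}_{\KK}$ exists, which is the point at issue.
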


\noindent
Combined with technical results about model structures on complexes in ind-completions of $\mb{Q}$-linear Tannakian categories established in \S1, this allows us to deduce the following rather strong rectification result.

\begin{thmB}
There is functor $\tilde{\Gamma}_{\mr{Hdg}}\prns{-, \mb{Q}}$ from $\prns{\sch[ft]{\mb{C}}}\op$ the category of commutative differential graded algebras in $\ind{\mhs^{\mr{p}}_{\mb{Q}}}$ such that, for each $X$, the cohomology of $\tilde{\Gamma}_{\mr{Hdg}}\prns{X, \mb{Q}}$ is naturally isomorphic to Deligne's mixed Hodge structure on $\mr{H}^{\bullet}_{\mr{Betti}}\prns{X, \mb{Q}}$.
\end{thmB}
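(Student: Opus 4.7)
The plan is to assemble $\tilde{\Gamma}_{\mr{Hdg}}\prns{-, \mb{Q}}$ in three stages. First, I would construct a strictly $1$-functorial presheaf of commutative algebras in polarizable mixed Hodge $\mb{Q}$-complexes; second, I would transport it through the symmetric monoidal equivalence of Theorem \ref{hodge.7} to a quasi-categorical presheaf of commutative algebras in $\D{\ind{\mhs^{\mr{p}}_{\mb{Q}}}}$; third, I would rectify the result to a strict presheaf of commutative differential graded algebras in $\ind{\mhs^{\mr{p}}_{\mb{Q}}}$ using the model-theoretic results of \S1.

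For the first stage, I would invoke the Thom--Whitney style construction of Navarro Aznar \cite{Navarro-Aznar_hodge-deligne}, extended to all of $\sch[ft]{\mb{C}}$ via the descent criterion of Guillen--Navarro \cite{Guillen-Navarro_critere-d'extension}, to obtain a strict contravariant functor from $\prns{\sch[ft]{\mb{C}}}\op$ to commutative algebras in polarizable mixed Hodge $\mb{Q}$-complexes whose underlying mixed Hodge complex at $X$ computes Deligne's mixed Hodge structure on $\mr{H}^{\bullet}_{\mr{Betti}}\prns{X, \mb{Q}}$. The crucial input is that the tensor products assembling the de Rham, $\mb{Q}$-Betti and filtered components into strictly commutative algebras are compatible with the Hodge and weight filtrations after passage to Thom--Whitney or Godement resolutions. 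Taking nerves then yields a functor of quasi-categories into commutative algebras in $\mr{D}^{\mr{b}}_{\mc{H}^{\mr{p}}, \mb{Q}}$; postcomposing with the symmetric monoidal quasi-categorical equivalence from Theorem \ref{hodge.7} gives a functor into commutative algebras in $\mr{D}^{\mr{b}}\prns{\mhs^{\mr{p}}_{\mb{Q}}}$, which enlarges to $\D{\ind{\mhs^{\mr{p}}_{\mb{Q}}}}$ after ind-completion.

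The third stage invokes the symmetric monoidal combinatorial model structure on $\cpx{\ind{\mhs^{\mr{p}}_{\mb{Q}}}}$ established in \S1. Because $\mhs^{\mr{p}}_{\mb{Q}}$ is $\mb{Q}$-linear and Tannakian, the forgetful functor from the model category of strict commutative DG algebras in $\cpx{\ind{\mhs^{\mr{p}}_{\mb{Q}}}}$ to that of $\mb{E}_{\infty}$-algebras is a Quillen equivalence. Straightening the quasi-categorical presheaf of the previous stage along this rectification and then applying a functorial cofibrant replacement inside the CDGA model category produces $\tilde{\Gamma}_{\mr{Hdg}}\prns{-, \mb{Q}}$ as an honest $1$-functor whose cohomology at $X$ is naturally isomorphic to Deligne's mixed Hodge structure on $\mr{H}^{\bullet}_{\mr{Betti}}\prns{X, \mb{Q}}$.

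The main obstacle, as I anticipate it, is precisely the passage from quasi-categorical to strict $1$-categorical functoriality. Theorem \ref{hodge.7} transports only up to quasi-categorical equivalence, so promoting the composite back to an ordinary functor out of $\prns{\sch[ft]{\mb{C}}}\op$ demands the simultaneous use of the functorial cofibrant replacements of \S1 and of the $\mb{Q}$-linear rectification between $\mb{E}_{\infty}$- and strictly commutative algebras. The $\mb{Q}$-linearity of $\mhs^{\mr{p}}_{\mb{Q}}$ together with the enlargement to $\ind{\mhs^{\mr{p}}_{\mb{Q}}}$, which supplies enough colimits and cofibrant objects, is what makes this last rigidification possible; for integral coefficients one obtains only the weaker $\mb{E}_{\infty}$-algebra statement mentioned in the abstract.
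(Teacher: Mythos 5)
Your first stage contains the essential gap. The Guill\'en--Navarro extension criterion \cite{Guillen-Navarro_critere-d'extension} does not produce a strict presheaf of commutative algebras in $\MHC^{\mr{p}}_{\mb{Q}}$ on $\prns{\sch[ft]{\mb{C}}}\op$: its output is built from choices of cubical hyperresolutions and is functorial only up to canonical isomorphism in the homotopy category of the relevant descent category, i.e.\ it yields a functor valued in a homotopy category, not a lift to the level of complexes with strict composition. That loss of strictness and of higher coherence is precisely the rectification problem that \ref{rect.6} solves, so your stage 1 presupposes the hardest part of the statement. Even over smooth separated schemes, what \cite[8.15]{Navarro-Aznar_hodge-deligne} provides strictly is a functor on the category $\on{\mc{C}pt}$ of compactified pairs $\prns{X, \overline{X}}$; eliminating the dependence on $\overline{X}$ is already nontrivial and is handled in \ref{rect.2} by rectifying against the combinatorial model structure of \ref{tannakian.8} via \cite[1.3.4.25]{Lurie_higher-algebra} and then taking a filtered colimit over $\on{\mc{C}pt}_X$ inside $\calg{\cpx{\ind{\mhs^{\mr{p}}_{\mb{Q}}}}\scr{\otimes}{}}$, which is one of the places where the passage to $\ind{\mhs^{\mr{p}}_{\mb{Q}}}$ is indispensable.

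By contrast, the paper extends from smooth separated schemes to all of $\sch[ft]{\mb{C}}$ only \emph{after} passing to the quasi-category $\calg{\D{\ind{\mhs^{\mr{p}}_{\mb{Q}}}}\scr{\otimes}{}}$: since Betti cohomology is $\mb{A}^1$-invariant and Nisnevich excisive, Proposition \ref{rect.5}, resting on Voevodsky's comparison \cite[4.7]{Voevodsky_unstable-motivic} between $\mb{A}^1$-local cdh-sheaves on $\sch[ft]{\mb{C}}$ and scdh-sheaves on $\sm[sft]{\mb{C}}$, produces the cdh-excisive extension, and the preservation of the symmetric monoidal structure by this extension is itself proved there by induction on dimension using resolution of singularities\textemdash an issue your route would equally have to confront for a Guill\'en--Navarro-type extension and which you do not address. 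Only then is the whole presheaf strictified, once, using \ref{tannakian.10} together with \cite[1.3.4.25]{Lurie_higher-algebra}; your appeal to a comparison between strictly commutative algebras and $\mb{E}_{\infty}$-algebras plus functorial cofibrant replacement is in the right spirit, but the operative mechanism is the rectification of diagrams against the model structure of \ref{tannakian.8}. To repair your outline you must either prove a genuinely strict, multiplicative extension of Navarro--Aznar's functor to singular schemes (not available in the literature, and essentially equivalent to the theorem you are proving) or postpone the extension to singular schemes until after the passage to quasi-categories, as the paper does.
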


\noindent
Gluing the functor $\tilde{\Gamma}_{\mr{Hdg}}\prns{-, \mb{Q}}$ of \ref{rect.6} with the ``singular cochain complex'' functor, we obtain the following variant with integral coefficients.

\begin{corC}
There is a functor $\tilde{\Gamma}_{\mr{Hdg}}\prns{-, \mb{Z}}$ from $\prns{\sch[ft]{\mb{C}}}\op$ to the quasi-category of $\mb{E}_{\infty}$-algebras in the symmetric monoidal derived quasi-category $\D{\ind{\mhs^{\mr{p}}_{\mb{Z}}}}$ such that, for each $X$, the cohomology of $\tilde{\Gamma}_{\mr{Hdg}}\prns{X, \mb{Z}}$ is naturally isomorphic to Deligne's mixed Hodge structure on $\mr{H}^{\bullet}_{\mr{Betti}}\prns{X, \mb{Z}}$.
\end{corC}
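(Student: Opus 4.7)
The plan is to lift the rational presheaf $\tilde{\Gamma}_{\mr{Hdg}}\prns{-, \mb{Q}}$ of Theorem~\ref{rect.6} to an integral one by gluing it with the integral singular cochain $\mb{E}_{\infty}$-algebra along their common rationalization. This reflects Deligne's description of an integral polarizable mixed Hodge structure as a finitely generated $\mb{Z}$-module together with a compatible rational polarizable mixed Hodge structure on its rationalization; equivalently, the abelian category $\mhs^{\mr{p}}_{\mb{Z}}$ is a fiber product $\mhs^{\mr{p}}_{\mb{Q}} \times_{\mod{\mb{Q}}} \mod{\mb{Z}}$ along rationalization and the forgetful functors to underlying modules.

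I would promote this pullback square of abelian categories to a pullback of presentable stable symmetric monoidal quasi-categories
$$\D{\ind{\mhs^{\mr{p}}_{\mb{Z}}}} \simeq \D{\ind{\mhs^{\mr{p}}_{\mb{Q}}}} \times_{\D{\mod{\mb{Q}}}} \D{\mod{\mb{Z}}},$$
using that ind-completion and the derived quasi-category functor both commute with fiber products whose structure functors are exact, colimit-preserving, and symmetric monoidal, which may be argued via the model-categorical framework of \S1. Since $\on{CAlg}\prns{-}$ and postcomposition with presheaf $\infty$-categories preserve limits, this induces a corresponding pullback description for presheaves of $\mb{E}_{\infty}$-algebras on $\prns{\sch[ft]{\mb{C}}}\op$. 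Next I would produce a natural equivalence between the rationalization of the integral singular cochain presheaf $C^{\bullet}_{\mr{sing}}\prns{-\prns{\mb{C}}^{\mr{an}}, \mb{Z}}$ and the image of $\tilde{\Gamma}_{\mr{Hdg}}\prns{-, \mb{Q}}$ under the forgetful functor $\D{\ind{\mhs^{\mr{p}}_{\mb{Q}}}} \to \D{\mod{\mb{Q}}}$; pointwise this is the classical comparison built into Deligne's Hodge complex, and at the presheaf level it is inherited from the rectification construction that produces $\tilde{\Gamma}_{\mr{Hdg}}\prns{-, \mb{Q}}$ in Theorem~\ref{rect.6}. Taking the pullback of these two $\mb{E}_{\infty}$-algebra presheaves then defines $\tilde{\Gamma}_{\mr{Hdg}}\prns{-, \mb{Z}}$, and its cohomology in each degree computes Deligne's integral mixed Hodge structure on $\mr{H}^{n}_{\mr{Betti}}\prns{X, \mb{Z}}$ because both components have the correct cohomology and the pullback square for $\mhs^{\mr{p}}_{\mb{Z}}$ was exactly what was used to glue them.

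The main obstacle is the construction of the coherent natural $\mb{E}_{\infty}$-equivalence between $C^{\bullet}_{\mr{sing}}\prns{-, \mb{Q}}$ and the rational cochains of $\tilde{\Gamma}_{\mr{Hdg}}\prns{-, \mb{Q}}$ at the quasi-categorical level, since pointwise equivalences do not assemble into a natural transformation of presheaves of $\mb{E}_{\infty}$-algebras without the coherent naturality provided by the symmetric monoidal enhancement of Beilinson's equivalence from Theorem~\ref{hodge.7} and the rectification framework of Theorem~\ref{rect.6}. A secondary technical point is verifying preservation of the relevant fiber product under ind-completion and the derived quasi-category functor, for which the results of \S1 on complexes in ind-completions of Tannakian categories should suffice.
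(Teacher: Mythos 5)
Your proposal follows essentially the same route as the paper: the paper proves exactly your pullback description $\D{\ind{\mhs^{\mr{p}}_{\mb{Z}}}} \simeq \D{\ind{\mhs^{\mr{p}}_{\mb{Q}}}} \times_{\D{\mod{\ab}_{\mb{Q}}}} \D{\mod{\ab}_{\mb{Z}}}$ (\ref{int.4}, \ref{int.5}, \ref{int.8}) and then obtains $\tilde{\Gamma}_{\mr{Hdg}}\prns{-,\mb{Z}}$ by gluing the rational functor of \ref{rect.6} with an integral cochain functor over their common rational Betti realization, via the universal property of the fiber product of symmetric monoidal presentable qcategories. The only substantive point you leave implicit is the one the paper spends \ref{int.6}, \ref{int.7} and \ref{rect.5} on, namely producing the integral singular-cochain presheaf as a coherently symmetric monoidal (K\"unneth-compatible) functor on all of $\sch[ft]{\mb{C}}$\textemdash done there via Ayoub's Betti realization of the motivic stable homotopy category and Spanier--Whitehead duality\textemdash which is also what makes the comparison with the rationalization of $\tilde{\Gamma}_{\mr{Hdg}}\prns{-,\mb{Q}}$ (your flagged obstacle) go through.
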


As mentioned above, our intended application of these results is the construction of Hodge realization functors $\varrho^*_{\mr{Hdg},X}$.
It is straightforward to check, using \ref{int.6} and M.~Robalo's universal property of the $\mb{P}^1$-stable $\mb{A}^1$-homotopy quasi-category (\cite[Corollary 1.2]{Robalo_K-theory-and-the-bridge}), that these results do indeed lead to the desired Hodge realization functor over $X = \spec{\mb{C}}$.
This construction actually provides a symmetric monoidal functor between symmetric monoidal quasi-categories, rather than a mere symmetric monoidal triangulated functor.
In this sense, we obtain a refined version of previous constructions due to A.~Huber (\cite{Huber_mixed-motives, Huber_realization-of-voevodsky's, Huber_corrigendum}), M.~Levine (\cite[2.3.10]{Levine_mixed-motives}), and F.~Lecomte and N.~Wach (\cite{Lecomte-Wach_realisation-de-hodge}).

Aside from playing a key role in the larger project of constructing $\varrho^*_{\mr{Hdg},X}$ for more general $X$, let us mention another application of our results.
Using \ref{rect.6} \resp{\ref{int.9}}, one can construct a motivic $\mb{E}_{\infty}$-ring spectrum $\mc{E}_{\mr{Hdg}}$ in $\mc{SH}\prns{\mb{C}}$ representing rational \resp{integral} absolute Hodge cohomology (\cite[\S5]{Beilinson_absolute-hodge}) and a morphism of $\mb{E}_{\infty}$-spectra $\mr{H}\mb{Z} \to \mc{E}_{\mr{Hdg}}$ from the motivic Eilenberg-Mac\thinspace Lane spectrum to $\mc{E}_{\mr{Hdg}}$ inducing regulator \resp{cycle-class} morphisms from the rational higher $\mr{K}$-theory \resp{higher Chow groups} of the smooth $\mb{C}$-scheme of finite type $X$ to its rational \resp{integral} absolute Hodge cohomology.

This motivic $\mb{E}_{\infty}$-spectrum $\mc{E}_{\mr{Hdg}}$ will be crucial to our approach to the construction of $\varrho^*_{\mr{Hdg}, X}$ for more general $X$:
higher algebra allows us to make sense of a well-behaved symmetric monoidal quasi-category of modules over $\mc{E}_{\mr{Hdg}}$ in $\mc{SH}\prns{X}$ and we show in a forthcoming work that this quasi-category of modules is naturally a full sub-quasi-category of the derived quasi-category $\D{\ind{\mhs^{\mr{p}}_{\mb{Z}}}}$.
Our strategy for bases $X$ of higher dimension is to generalize this result. 

\subsection*{Relation to other work}

In some form or other, the essential results of the first three sections below are contained in the author's $2013$ PhD thesis.
Interesting related work has appeared since then.

In \cite[Appendix A.2]{Pridham_tannaka-duality}, J.P.~Pridham discusses an construction related to \ref{rect.6}, applying different techniques and treating only the case of smooth $\mb{C}$-schemes.

Working with real coefficients, in \cite{Bunke-Nikolaus-Tamme_beilinson-regulator}, U.~Bunke, T.~Nikolaus and G.~Tamme have used similar techniques for lifting regulator morphisms to morphisms of motivic $\mb{E}_{\infty}$-ring spectra, further analyzing the structure of the motivic $\mb{E}_{\infty}$-ring spectrum representing real absolute Hodge cohomology and its relation to differential algebraic $\mr{K}$-theory.

An alternative construction of a motivic commutative ring spectrum representing absolute Hodge cohomology with real coefficients will be presented in the PhD thesis of A.~Navarro~Garamendia (\cite{Navarro_thesis}).

Using our rectification result, W.~Soergel and M.~Wendt have studied a motivic $\mb{E}_{\infty}$-ring spectrum, denoted by $\mc{E}_{\rm{GrH}}$ in \cite{Soergel-Wendt_perverse-motives}, which represents an interesting variant of absolute Hodge cohomology.

\subsection*{Organization}

We begin in \S1 by establishing some technical results on model structures and Tannakian categories, specifically showing that the injective model structure on the category of complexes of ind-objects in a $\mb{Q}$-linear Tannakian category is symmetric monoidal (\ref{tannakian.7}); that the category of commutative algebras in this symmetric monoidal category admits a model structure (\ref{tannakian.8}); and that this model structure on commutative algebras allows for a useful rectification result (\ref{tannakian.10}).

In \S2, working with rational coefficients, we construct a symmetric monoidal quasi-category of mixed Hodge complexes and use it to promote Beilinson's triangulated equivalence $\mr{D}^{\mr{b}}\prns{\mhs^{\mr{p}}_{\mb{Q}}} \simeq \mr{D}^{\mr{b}}_{\mc{H}^{\mr{p}},\mb{Q}}$ to an equivalence of symmetric monoidal quasi-categories (\ref{hodge.7}).

In \S3, we combine the results from the previous sections to construct the functor of \ref{rect.6} in two steps.
First, we restrict construct the functor after restricting the domain to separated, smooth $\mb{C}$-schemes (\ref{rect.2}).
Then, using a result of V.~Voevodsky, we extend the functor to all $\mb{C}$-schemes of finite type (\ref{rect.6}).

In \S4, we establish some general results on fiber products of stable quasi-categories and t-structures (\ref{int.2}, \ref{int.3}) and use them to show that the derived quasi-category of mixed Hodge $\mb{Z}$-structures is a fiber product of the derived quasi-categories of mixed Hodge $\mb{Q}$-structures and Abelian groups over the derived quasi-category of $\mb{Q}$-modules (\ref{int.5}).
We then establish the required functoriality of singular cochain complexes of associated analytic spaces (\ref{int.8}) and deduce \ref{int.9}.

\subsection*{Notation and conventions}

\noindent
\textsc{Grothendieck universes:}
We assume that each set is an element of a Grothendieck universe.
Fix uncountable Grothendieck universes $\mf{U} \in \mf{V}$ such that the categories $\set$, $\ab$ and $\cat$ of $\mf{U}$-sets, $\mf{U}$-small Abelian groups and $\mf{U}$-small categories are $\mf{V}$-small. 
Unless context dictates otherwise, all commutative rings and schemes will be $\mf{U}$-small.
We shall consider variations on such monstrosities as the category $\CAT$ of $\mf{V}$-small categories, which is not $\mf{V}$-small, but ambiguity is unlikely to result from our refusal to name a sufficiently large third Grothendieck universe.

\vspace{.25\baselineskip}

\noindent
\textsc{Quasi-categories:}
We freely employ the language of quasi-categories and higher algebra as developed in \cite{Lurie_higher-topos, Lurie_higher-algebra}.
For brevity, we contract the word ``quasi-category'' to ``\emph{qcategory}''.

\vspace{.25\baselineskip}

\noindent
\textsc{Categories as qcategories:}
We regard all categories as qcategories by tacitly taking their nerves.
As justification for this convention, observe that the nerve functor $\nerve{}: \cat \to \sset$ is right Quillen with respect to the model structure on $\cat$ whose weak equivalences and fibrations are the equivalences of categories and the isofibrations, respectively, and the Joyal model structure on $\sset$, and the functor induced between the qcategories underlying these model structures is fully faithful (\cite[2.8]{Joyal_notes-on-quasi-categories}).

\vspace{.25\baselineskip}

\noindent
\textsc{Functors and limits:}
We say that a functor $F: \mc{C} \to \mc{D}$ between qcategories is \emph{$\mf{U}$-continuous} \resp{\emph{$\mf{U}$-cocontinuous}} if it preserves $\mf{U}$-limits \resp{$\mf{U}$-colimits}, i.e., limits \resp{colimits} of $\mf{U}$-small diagrams.
We also write $F \dashv G$ to indicate that the functor $F: \mc{C} \to \mc{D}$ is left adjoint to the functor $G: \mc{D} \to \mc{C}$ (\cite[5.2.2.1]{Lurie_higher-topos}).

\vspace{.25\baselineskip}

\noindent
\textsc{Presentability:}
Let $\kappa$ denote an infinite regular $\mf{U}$-cardinal.
We preserve the terminology from the theory of $1$-categories (\cite{Adamek-Rosicky_locally-presentable}) and refer to an object $X$ of a qcategory $\mc{C}$ as \emph{$\kappa$-presentable} if $\map{X}{-}_{\mc{C}}: \mc{C} \to \spc{}$ preserves $\kappa$-filtered colimits, i.e., if it is ``$\kappa$-compact'' in the sense of \cite[5.3.4.5]{Lurie_higher-topos}.
We say that the qcategory $\mc{C}$ is \emph{locally $\mf{U}$-presentable} \resp{\emph{locally $\kappa$-presentable}} if it is ``presentable'' \resp{``$\kappa$-compactly generated''} in the sense of \cite[5.5.0.18, 5.5.7.1]{Lurie_higher-topos}.

\vspace{.25\baselineskip}

\noindent
\textsc{Localization:}
If $\mc{C}$ is a $\mf{U}$-small qcategory and $\mf{W}$ a class of morphisms of $\mc{C}$, then there exists a functor $\lambda: \mc{C}  \to \mc{C}\brk{\mf{W}^{-1}}$ with the universal property that, for each $\mf{U}$-small qcategory $\mc{D}$, composition with $\lambda$ induces a fully faithful functor $\fun{\mc{C}\brk{\mf{W}^{-1}}}{\mc{D}} \hookrightarrow \fun{\mc{C}}{\mc{D}}$ whose essential image is spanned by those functors that send each element of $\mf{W}$ to an equivalence in $\mc{D}$ (\cite[1.3.4.2]{Lurie_higher-algebra}).
We refer to $\lambda$ or, abusively, $\mc{C}\brk{\mf{W}^{-1}}$, as a \emph{localization of $\mc{C}$ with respect to $\mf{W}$}.
If $\lambda$ admits a fully faithful right adjoint $\iota$, then we say that $\lambda$ is a \emph{reflective localization of $\mc{C}$}.
This applies in particular to the locally presentable setting:
if $\mc{C}$ is a locally $\mf{U}$-presentable category and $S$ is a $\mf{U}$-set of morphisms of $\mc{C}$, then the localization $\lambda: \mc{C} \to \mc{C} \brk{S^{-1}}$ is reflective, the essential image of its right adjoint is the full subqcategory spanned by the $S$-local objects, i.e., the objects $X \in \mc{C}$ such that, for each $f \in S$, the morphism $\map{f}{X}_{\mc{C}}$ is a weak homotopy equivalence, and $\mc{C}\brk{S^{-1}}$ is locally $\mf{U}$-presentable (\cite[5.5.4.15, 5.5.4.20]{Lurie_higher-topos}).

\vspace{.25\baselineskip}

\noindent
\textsc{Symmetric monoidal qcategories:}
A symmetric monoidal qcategory is, by definition (\cite[2.0.0.7]{Lurie_higher-algebra}), a coCartesian fibration $p: \mc{C}^{\otimes} \to \fin$ such that the morphisms $\rho^i: \ang{n} \to \ang{1}$ given by 
$
\rho^i\prns{j}
:=
1$ if $i = j$ and 
$
\rho^i\prns{j} = *$ if
$i \neq j$
induce functors $\rho^i_!: \mc{C}^{\otimes}_{\ang{n}} \to \mc{C}^{\otimes}_{\ang{1}}$ which in turn induce equivalences $\mc{C}^{\otimes}_{\ang{n}} \simeq \prns{\mc{C}^{\otimes}_{\ang{1}}}^n$.
We systematically suppress the fibration $p$ from the notation, referring to ``the symmetric monoidal qcategory $\mc{C}^{\otimes}$''.
We also refer to $\mc{C} := \mc{C}^{\otimes}_{\ang{1}}$ as the \emph{qcategory underlying $\mc{C}^{\otimes}$}.
Similarly, we use the notation $F^{\otimes}: \mc{C}^{\otimes} \to \mc{D}^{\otimes}$ for a possibly lax symmetric monoidal functor and $F: \mc{C} \to \mc{D}$ for the underlying functor.

Appealing to \cite[2.4.2.6]{Lurie_higher-algebra}, the category $\calg{\qcat^{\times}}$ of commutative algebra objects of $\qcat^{\times}$ is a convenient model for the qcategory of $\mf{U}$-small symmetric monoidal qcategories:
its objects correspond to $\mf{U}$-small symmetric monoidal qcategories and its morphisms to symmetric monoidal functors.

\vspace{.25\baselineskip}

\noindent
\textsc{Qcategories underlying model categories:}
The model categories appearing in the sequel will prove to be \emph{$\mf{U}$-combinatorial}, i.e., cofibrantly generated model categories whose underlying categories are locally $\mf{U}$-presentable (\cite[1.8]{Beke_sheafifiable-homotopy}, \cite[\S A.2.6]{Lurie_higher-topos}, \cite[1.21]{Barwick_left-and-right}).
Many will even prove to be \emph{$\mf{U}$-tractable} model categories, i.e., $\mf{U}$-combinatorial model categories whose generating cofibrations and trivial cofibrations have cofibrant domains (\cite[1.21]{Barwick_left-and-right}).

If $\mb{M}$ is a $\mf{U}$-combinatorial model category and $\mf{W}$ is its class of weak equivalences, then its \emph{underlying qcategory $\mb{M}\brk{\mf{W}^{-1}}$} is locally $\mf{U}$-presentable (\cite[1.3.4.15, 1.3.4.16]{Lurie_higher-algebra}).
If $\mb{M}^{\otimes}$ is a symmetric monoidal $\mf{U}$-combinatorial model category, then it admits an \emph{underlying locally $\mf{U}$-presentable symmetric monoidal qcategory $\mb{M}\brk{\mf{W}^{-}}\scr{\otimes}{}$} (\cite[4.1.3.6, 4.1.4.8]{Lurie_higher-algebra}).
Strictly speaking, the qcategory underlying the symmetric monoidal qcategory $\mb{M}\brk{\mf{W}^{-1}}\scr{\otimes}{}$ is defined to be the full subqcategory of $\mb{M}\brk{\mf{W}^{-1}}$ spanned by the cofibrant objects of $\mb{M}$, but the inclusion of this full subqcategory is an equivalence as each object of $\mb{M}$ is weakly equivalent to a cofibrant object.

\vspace{\baselineskip}

\noindent
\textsc{Notation:}
While we maintain most of the notations of \cite{Lurie_higher-topos,Lurie_higher-algebra}, the following list specifies the notable deviations therefrom and other frequently recurring symbols.

\renewcommand{\arraystretch}{1.2}

\begin{longtable}[c]{p{1.1in}>{\raggedright\arraybackslash}p{4.5in}}
$\mc{C}_{\aleph_0}$ & the full subqcategory of $\mc{C}$ spanned by the $\aleph_0$-presentable objects (\cite[5.3.4.5]{Lurie_higher-topos})
\\
$\mc{C}^{\amalg}$ & the coCartesian symmetric monoidal structure on the qcategory $\mc{C}$ (\cite[\S2.4.3]{Lurie_higher-algebra})
\\
$\mc{C}^{\times}$ & the Cartesian symmetric monoidal structure on the qcategory $\mc{C}$ (\cite[2.4.1.1]{Lurie_higher-algebra})
\\
$\mc{C}\brk{\mf{W}^{-1}}$ & a localization of the qcategory $\mc{C}$ with respect to the class of morphisms $\mf{W}$ (\cite[1.3.4.1]{Lurie_higher-algebra}) 
\\
$\ab$ & the category of $\mf{U}$-small Abelian groups
\\
$\calg{\mc{C}^{\otimes}}$ & the qcategory of commutative algebras in the symmetric monoidal qcategory $\mc{C}^{\otimes}$ (\cite[2.1.3.1]{Lurie_higher-algebra})
\\
$\fun{\mc{C}^{\otimes}}{\mc{D}^{\otimes}}^{\otimes}$ & the qcategory of symmetric monoidal functors $F^{\otimes}: \mc{C}^{\otimes} \to \mc{D}^{\otimes}$ (\cite[2.1.3.7]{Lurie_higher-algebra}) 
\\
$\h^r$ & the degree $r$ cohomology functor $\trun^{\leq r}\trun^{\geq r}: \mc{C} \to \mc{C}^{\heartsuit}$ of a $\trun$-structure on the stable qcategory $\mc{C}$ (\cite[1.2.1.4]{Lurie_higher-algebra})
\\
$\ho{\mc{C}}$ & the homotopy category of the qcategory $\mc{C}$ (\cite[1.2.3]{Lurie_higher-topos})
\\
$\ind{\mc{C}}$ & the ind-completion of the qcategory $\mc{C}$ (\cite[5.3.5.1]{Lurie_higher-topos})
\\
$\map{X}{Y}_{\mc{C}}$ & the mapping space between two objects $X$ and $Y$ of the qcategory $\mc{C}$ (\cite[1.2.2]{Lurie_higher-topos})
\\
$\mod{\mc{C}}_A$ & the qcategory of modules over $A \in \calg{\mc{C}^{\otimes}}$ (\cite[4.5.1.1]{Lurie_higher-algebra})
\\
$\nerve{\mc{C}}$ & the nerve of the category $\mc{C}$
\\
$\dgnerve{\bs{\mc{C}}}$, $\altdgnerve{\bs{\mc{C}}}$ & two constructions of the differential graded nerve of the differential graded category $\bs{\mc{C}}$ (\cite[1.3.1.6, 1.3.1.16]{Lurie_higher-algebra})
\\
$\snerve{\bs{\mc{C}}}$ & the simplicial nerve of the simplicial category $\bs{\mc{C}}$ (\cite[1.1.5.5]{Lurie_higher-topos})
\\
$\psh{\mc{C}}{\mc{D}}$ & $\fun{\mc{C}\op}{\mc{D}}$
\\
$\qcat$ & the qcategory of $\mf{U}$-small \resp{$\mf{V}$-small} qcategories (\cite[3.0.0.1]{Lurie_higher-topos})
\\
$\qcat^{\mr{Ex}}$, $\QCAT^{\mr{Ex}}$ & the qcategory of $\mf{U}$-small \resp{$\mf{V}$-small} stable qcategories and exact functors (\cite[\S1.1.4]{Lurie_higher-algebra})
\\
$\qcat^{\times}$
& the Cartesian symmetric monoidal qcategories of $\mf{U}$-small \resp{$\mf{V}$-small} qcategories (\cite[2.4.1.5]{Lurie_higher-algebra}) 
\\
$\sch[ft]{S}$ & the essentially $\mf{U}$-small category of $S$-schemes of finite type
\\
$\sm[ft]{S}$, $\sm[sft]{S}$ & the full subcategory of $\sch[ft]{S}$ spanned by the smooth \resp{smooth and separated} $S$-schemes
\\
$\sset$ & the category of simplicial $\mf{U}$-sets
\\
$\spc{}$ & the qcategory of $\mf{U}$-small spaces, i.e., the qcategory underlying the model structure on $\sset$ whose weak equivalences and fibrations are the weak homotopy equivalences and the Kan fibrations, respectively (\cite[1.2.16.1]{Lurie_higher-topos})
\\
$\spc{}_*$ & the qcategory of pointed objects in $\spc{}$ (\cite[4.8.1.20]{Lurie_higher-algebra})
\\
$\trun^{\leq r}$, $\trun^{\geq r}$ & the truncations of a cohomological $\trun$-structure on a stable qcategory $\mc{C}$
\\
$\mf{U} \in \mf{V}$ & fixed Grothendieck universes
\\
$\yon{}{}$ &
the Yoneda embedding (\cite[\S5.1.3]{Lurie_higher-topos})
\end{longtable}

\renewcommand{\arraystretch}{1}

\section{Deriving Tannakian categories}

\begin{motivation}
One nice property of combinatorial model structures\textemdash among many others\textemdash is that they allow for convenient rectification results.
For instance, by \cite[1.3.4.25]{Lurie_higher-algebra}, if $\mb{M}$ is a $\mf{U}$-combinatorial model category, $\mf{W}$ its class of weak equivalences and $\mc{C}$ a $\mf{U}$-small category, then any functor $F: \mc{C} \to \mb{M}\brk{\mf{W}^{-1}}$ can be rectified to a functor $F': \mc{C} \to \mb{M}$, i.e., there exists a functor $F': \mc{C} \to \mb{M}$ whose composite with the localization $\mb{M} \to \mb{M}\brk{\mf{W}^{-1}}$ is equivalent to $F$.
In a similar vein but under more restrictive hypotheses, if $\mb{M}^{\otimes}$ is a $\mf{U}$-combinatorial symmetric monoidal model category, then commutative algebras in the underlying symmetric monoidal qcategory $\mb{M}\brk{\mf{W}^{-1}}\scr{\otimes}{}$ can be rectified to commutative algebras in $\mb{M}^{\otimes}$ (\cite[4.5.4.7]{Lurie_higher-algebra}).
Our goal in this section is to show that the category of complexes of ind-objects in a Tannakian category admits a $\mf{U}$-combinatorial model structure allowing for both of these types of rectifications.
\end{motivation}

\begin{summary}
After a brief review of the theory of Tannakian categories, we show that, for $\mb{T}$ a $\mf{U}$-small Tannakian category, the categories $\cpx{\ind{\mb{T}}}$ and $\calg{\cpx{\ind{\mb{T}}}\scr{\otimes}{}}$ admit $\mf{U}$-combinatorial model structures (\ref{tannakian.7}, \ref{tannakian.8}).
Theorem \ref{tannakian.10} shows that commutative algebras in $\D{\ind{\mb{T}}}\scr{\otimes}{}$ can be rectified to commutative algebras in $\cpx{\ind{\mb{T}}}\scr{\otimes}{}$.
\end{summary}

\begin{defn}
\label{tannakian.1}
Let $\mb{A}$ be an Abelian category.
\begin{enumerate}
\item
As in \cite[3.1]{Drew_verdier-quotients}, we construct its \emph{bounded derived qcategory} $\D{\mb{A}}^{\mr{b}}$ as follows:
take the differential graded nerve $\K{\mb{A}}^{\mr{b}} := \dgnerve{\cpx{\mb{A}}^{\mr{b}}}$ of the differential graded category of bounded cochain complexes in $\mb{A}$ (\cite[1.3.1.6]{Lurie_higher-algebra}) and then take the Verdier quotient $\D{\mb{A}}^{\mr{b}} := \K{\mb{A}}^{\mr{b}}/\on{\mc{A}c}\prns{\mb{A}}$ with respect to the full subqcategory $\on{\mc{A}c}\prns{\mb{A}}$ spanned by the acyclic complexes.
The analogous construction for unbounded complexes results in the \emph{unbounded derived qcategory} $\D{\mb{A}}$ of $\mb{A}$.
\item
The category $\mb{A}$ is \emph{$\mf{U}$-Grothendieck Abelian} if it is an Abelian, locally $\mf{U}$-presentable category in which $\aleph_0$-filtered $\mf{U}$-colimits preserve finite limits.
By \cite[3.10]{Beke_sheafifiable-homotopy}, this is equivalent to the classical definition as an AB5 category with a generator.
If $\mb{A}$ is essentially $\mf{U}$-small, then its ind-completion $\ind{\mb{A}}$ is $\mf{U}$-Grothendieck Abelian.
If $\mb{A}$ is $\mf{U}$-Grothendieck Abelian, then $\cpx{\mb{A}}$ admits a $\mf{U}$-combinatorial model structure (\cite[A.2.6.1]{Lurie_higher-topos}) whose cofibrations and weak equivalences are the monomorphisms and quasi-isomorphisms, respectively (\cite[3.13]{Beke_sheafifiable-homotopy}), called the \emph{injective model structure} and denoted by $\cpx{\mb{A}}\scr{}{\mr{inj}}$.
Its homotopy category is the unbounded derived category of $\mb{A}$.
Let $\D{\mb{A}}$ denote the stable, locally $\mf{U}$-presentable qcategory underlying $\cpx{\mb{A}}\scr{}{\mr{inj}}$ (\cite[1.3.4.22]{Lurie_higher-algebra}).
\item
If $\mb{A}^{\otimes}$ is a symmetric monoidal structure on $\mb{A}$, then $\cpx{\mb{A}}^{\mr{b}}$ and $\cpx{\mb{A}}$ inherit symmetric monoidal structures given informally by
\begin{equation}
\label{tannakian.1.1}
\prns{K \otimes L}^n 
:= 
\bigoplus_{r \in \mb{Z}} \prns{K^r \otimes L^{n-r}},
\qquad
\d\prns{x \otimes y}
:=
\d\prns{x} \otimes y + \prns{-1}^{\deg\prns{x}}x \otimes \d\prns{y}.
\end{equation}
By \cite[1.3.4.5, 4.1.3.4]{Lurie_higher-algebra}, $\K{\mb{A}}^{\mr{b}}$ and $\K{\mb{A}}$ inherit symmetric monoidal structures from $\cpx{\mb{A}}^{\mr{b}}\scr{\otimes}{}$ and $\cpx{\mb{A}}\scr{\otimes}{}$, respectively.
\item
If $\mb{A}^{\otimes}$ is a symmetric monoidal structure on $\mb{A}$ such that $\prns{-} \otimes \prns{-}$ is exact separately in each variable, then, by \cite[3.2]{Drew_verdier-quotients}, the Verdier quotient functor $q: \K{\mb{A}}^{\mr{b}} \to \D{\mb{A}}^{\mr{b}}$ underlies a symmetric monoidal functor $q^{\otimes}$ realizing $\D{\mb{A}}^{\mr{b}}\scr{\otimes}{}$ as a \emph{symmetric monoidal Verdier quotient of $\K{\mb{A}}^{\mr{b}}\scr{\otimes}{}$ by $\on{\mc{A}c}\prns{\mb{A}}$} in the sense of \cite[1.5]{Drew_verdier-quotients}.
This means that, for each stable symmetric monoidal qcategory $\mc{C}^{\otimes}$, composition with $q^{\otimes}$ induces a fully faithful functor $\fun{\D{\mb{A}}^{\mr{b}}\scr{\otimes}{}}{\mc{C}^{\otimes}}^{\otimes} \hookrightarrow \fun{\K{\mb{A}}^{\mr{b}}\scr{\otimes}{}}{\mc{C}^{\otimes}}^{\otimes}$ whose essential image is spanned by the symmetric monoidal functors sending each object of $\on{\mc{A}c}\prns{\mb{A}}$ to a zero object.
\end{enumerate}
\end{defn}

\begin{defn}
\label{tannakian.2}
An essentially $\mf{U}$-small closed symmetric monoidal category $\mb{T}^{\otimes}$ is \emph{Tannakian} if it satisfies the following conditions:
\begin{enumerate}
\item
$\mb{T}$ is Abelian;
\item
$\hom{\1{\mb{T}}}{\1{\mb{T}}}_{\mb{T}}$ is a field of characteristic zero;
\item
each object of $\mb{T}$ is $\otimes$-dualizable \textup{(\cite[4.6.1.12]{Lurie_higher-algebra})};
\item
for each $V \in \mb{T}$, the composite $\1{\mb{T}} \xrightarrow{\eta} V \otimes V^{\vee} \simeq V^{\vee} \otimes V \xrightarrow{\varepsilon} \1{\mb{T}}$ is a nonnegative integer under the identification of $\mb{Z}$ with its image in the field $\hom{\1{\mb{T}}}{\1{\mb{T}}}_{\mb{T}}$ of characteristic zero, where $\eta$ and $\varepsilon$ are the coevaluation and evaluation morphisms, respectively, and the equivalence $V \otimes V^{\vee} \simeq V^{\vee} \otimes V$ is the symmetry isomorphism.
\end{enumerate}
\end{defn}

\begin{rmk}
\label{tannakian.3}
This definition is more restrictive than the original one of \cite[III, 3.2.1]{Saavedra_categories-tannakiennes}, but the two are equivalent once we require $\hom{\1{\mb{T}}}{\1{\mb{T}}}_{\mb{T}}$ to be a field of characteristic zero by \cite[7.1]{Deligne_categories-tannakiennes}.
\end{rmk}

\begin{ex}
\label{tannakian.4}
Let $\KK$ be a field of characteristic zero.
\begin{enumerate}
\item 
If $G$ is an affine gerbe on the $\mr{fpqc}$-site $\prns{\sch[ft]{\KK}}_{\mr{fpqc}}$ of $\KK$-schemes of finite type,
then the essentially $\mf{U}$-small category $\qcoh{G}^{\vee}$ of locally free quasi-coherent sheaves of finite rank on $G$ is Tannakian when equipped with the usual tensor product of quasi-coherent sheaves.
By \cite[1.12]{Deligne_categories-tannakiennes}, every essentially $\mf{U}$-small Tannakian category $\mb{T}$ arises in this way. 
\item 
More concretely, the category $\mhs_{\KK}$ of mixed Hodge $\KK$-structures (\cite[2.3.8]{Deligne_hodgeII}) is Tannakian, as is the full subcategory $\mhs^{\mr{p}}_{\KK} \subseteq \mhs_{\KK}$ spanned by the objects $\prns{H, W, F}$ such that the pure Hodge $\KK$-structure on $\on{gr}^W_n(H)$ induced by the filtration $F$ admits a polarization (\cite[2.1.15]{Deligne_hodgeII}) for each $n\in\mb{Z}$.
\end{enumerate}
\end{ex}

\begin{rmk}
\label{tannakian.5}
Let $\mb{T}^{\otimes}$ be an essentially $\mf{U}$-small Tannakian category.
\begin{enumerate}
\item
By \cite[7.1]{Deligne_categories-tannakiennes}, there exists a field extension $\KK := \hom{\1{\mb{T}}}{\1{\mb{T}}}_{\mb{T}} \hookrightarrow \KK'$ and a $\KK$-linear, exact symmetric monoidal functor $\omega^{\otimes}: \mb{T}^{\otimes} \to \mod{\ab}_{\KK'}\scr{\otimes}{}$.
By \cite[1.19]{Deligne-Milne_tannakian-categories}, $\omega$ is faithful.
Using the exactness and faithfulness of $\omega$, one finds that $\mb{T}$ is Noetherian, since $\mod{\ab}_{\KK'}$ is.
By \emph{Noetherian}, we mean that each family of subobjects of each fixed object $V \in \mb{T}$ contains a maximal element.
\item
If $\mb{A}$ is an Abelian category, we define \emph{homological dimension of $A \in \mb{A}$} to be $\on{hdim}\prns{A} := \sup\brc{n \in \mb{Z}_{\geq0} \mid \exists B \in \mb{A} \brk{\ext{A}{B}^n_{\mb{A}}}} \in \mb{Z}_{\geq 0} \cup \brc{\infty}$ and we define the \emph{homological dimension of $\mb{A}$} to be $\on{hdim}\prns{\mb{A}} := \sup\brc{\on{hdim}\prns{A} \mid A \in \mb{A}} \in \mb{Z}_{\geq0} \cup \brc{\infty}$.
Since each $V \in \mb{T}$ is $\otimes$-dualizable, the adjunction $\prns{-} \otimes V \dashv V^{\vee} \otimes \prns{-}$ shows that $\on{hdim}\prns{\mb{T}} = \on{hdim}\prns{\1{\mb{T}}}$.
As a consequence, if $\on{hdim}\prns{\1{\mb{T}}} < \infty$, then $\mb{T}$ satisfies the hypotheses of \cite[4.7]{Drew_verdier-quotients} and it follows that the $\aleph_0$-presentable objects of $\D{\ind{\mb{T}}}$ are precisely the $\otimes$-dualizable objects and the natural symmetric monoidal functor $\ind{\D{\mb{T}}^{\mr{b}}}\scr{\otimes}{} \to \D{\ind{\mb{T}}}\scr{\otimes}{}$ is an equivalence.
This applies in particular to $\mb{T}^{\otimes} = \mhs^{\mr{p}, \otimes}_{\KK}$, since $\on{hdim}\prns{\mhs^{\mr{p}}_{\KK}} = 1$ by \cite[3.35]{Peters-Steenbrink_mixed-hodge}.
\end{enumerate}
\end{rmk}

\begin{lemma}
\label{tannakian.6}
Let $\KK$ be a field of characteristic zero, $F^{\otimes}: \mb{T}^{\otimes} \to \mb{T}'^{\otimes}$ a $\KK$-linear, exact symmetric monoidal functor between two essentially $\mf{U}$-small $\KK$-linear Tannakian categories.
Then:
\begin{enumerate}
\item
$\cpx{\ind{F}}: \cpx{\ind{\mb{T}}}\scr{\otimes}{\mr{inj}} \to \cpx{\ind{\mb{T}'}}\scr{\otimes}{\mr{inj}}$ is a $\KK$-linear, exact, faithful symmetric monoidal left Quillen functor;
\item
there is an essentially commutative square
\[
\xymatrix{
\mb{T}
\ar[rr]_-{F}
\ar@{^{(}->}[d]
&
&
\mb{T}'
\ar@{^{(}->}[d]
\\
\cpx{\ind{\mb{T}}}
\ar[rr]^-{\cpx{\ind{F}}}
&
&
\cpx{\ind{\mb{T}'}}
}
\]
in which vertical arrows are the evident inclusions in degree zero; and
\item
the functor $\D{\ind{F}}: \D{\ind{\mb{T}}} \to \D{\ind{\mb{T}'}}$ is conservative and $\trun$-exact with respect to the natural $\trun$-structures.
\end{enumerate}
\end{lemma}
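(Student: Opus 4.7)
The approach is to deduce everything from one structural fact—faithfulness of $F$—and then propagate it through $\ind{}$, $\cpx{}$, and $\D{}$, tracking exactness and symmetric monoidality along the way. Faithfulness of $F$ follows from Tannakian rigidity: for any nonzero $V \in \mb{T}$, axiom (iv) of \ref{tannakian.2} interprets the trace of $\on{id}_V$ as a positive integer $\dim{V}$; since $F$ is $\KK$-linear, exact, and symmetric monoidal, it sends duals to duals and preserves traces, so $\dim{F(V)} = \dim{V} > 0$ and in particular $F(V) \neq 0$. An exact functor between Abelian categories that reflects the zero object is faithful (one may alternatively cite \cite[1.19]{Deligne-Milne_tannakian-categories}).

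For (i), let $\ind{F}: \ind{\mb{T}} \to \ind{\mb{T}'}$ denote the unique cocontinuous extension of $F$, which is automatically left adjoint, $\KK$-linear, and symmetric monoidal. Exactness of $\ind{F}$ follows from the exactness of $F$ together with the fact that in Grothendieck Abelian categories $\aleph_0$-filtered colimits commute with finite limits, combined with the canonical presentation of each object of $\ind{\mb{T}}$ as a filtered colimit of objects of $\mb{T}$. For faithfulness of $\ind{F}$ it suffices to show that $\ind{F}(Z) = 0$ implies $Z = 0$; writing $Z = \colim_i Z_i$ with $Z_i \in \mb{T}$, any morphism $V \to Z$ from an $\aleph_0$-presentable $V \in \mb{T}$ factors through some $Z_i$, and becoming zero in $\colim F(Z_i) = \ind{F}(Z)$ implies that $F(V) \to F(Z_j)$ is already zero at some later stage $j$ by $\aleph_0$-presentability of $F(V)$; faithfulness of $F$ then forces the factoring map $V \to Z_j$ to vanish, so $V \to Z$ is zero and hence $Z = 0$. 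The levelwise extension $\cpx{\ind{F}}$ inherits $\KK$-linearity, exactness, faithfulness, and symmetric monoidality (directly from the formula \eqref{tannakian.1.1}); exactness of $\ind{F}$ ensures that $\cpx{\ind{F}}$ preserves monomorphisms and quasi-isomorphisms, i.e., the cofibrations and weak equivalences of the injective model structure, so it is left Quillen. Part (ii) is tautological: the vertical inclusions place an object in degree zero, and $\cpx{\ind{F}}$ acts levelwise, so the square commutes strictly.

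For (iii), the derived functor $\D{\ind{F}}$ is computed by $\cpx{\ind{F}}$, which is already exact on complexes, so one obtains a natural equivalence $\h^n \circ \D{\ind{F}} \simeq \ind{F} \circ \h^n$ for every $n \in \mb{Z}$. This immediately yields $\trun$-exactness for the natural $\trun$-structures, since those are characterized by vanishing of cohomology groups. For conservativity, $\D{\ind{F}}(K) \simeq 0$ forces $\ind{F}(\h^n(K)) = 0$ for every $n$, and faithfulness of $\ind{F}$ then gives $\h^n(K) = 0$ for every $n$, i.e., $K \simeq 0$. The main obstacle is faithfulness of $F$; once the Tannakian dimension argument settles this, the remainder of the proof is a routine combination of standard properties of ind-completions of Abelian categories, levelwise extension to cochain complexes, and the concrete description of the injective model structure.
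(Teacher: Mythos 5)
Your proposal is correct and follows essentially the same route as the paper: faithfulness of $F$ (the paper simply cites \cite[1.19]{Deligne-Milne_tannakian-categories}, which you also invoke, and your trace argument is a valid substitute), propagation of $\KK$-linearity, exactness, faithfulness and monoidality to $\cpx{\ind{F}}$ (where the paper cites SGA4 and you argue directly with filtered colimits and $\aleph_0$-presentability), left Quillen-ness via preservation of monomorphisms and quasi-isomorphisms, $\trun$-exactness from exactness, and conservativity from faithfulness together with nondegeneracy of the natural $\trun$-structure. The only cosmetic difference is that the paper checks conservativity on heart objects via an injectivity-of-endomorphism-rings square, whereas you use that the faithful exact $\ind{F}$ reflects zero objects; these are interchangeable.
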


\begin{proof}
By \cite[1.19]{Deligne-Milne_tannakian-categories}, $F$ is faithful.
The existence of $\cpx{\ind{F}}$, as well as its $\KK$-linearity, exactness, faithfulness, $\mf{U}$-cocontinuity and compatibility with the symmetric monoidal structures, follows from \cite[Expos\'e I, 8.9.8, 8.6.4]{SGA4a} and the obvious functorial properties of complexes and categories of ind-objects.
The Adjoint Functor Theorem (\cite[1.66]{Adamek-Rosicky_locally-presentable}) implies that the $\mf{U}$-cocontinuous functor $\cpx{\ind{F}}$ is a left adjoint.
As $\cpx{\ind{F}}$ preserves quasi-isomorphisms and monomorphisms, it is left Quillen with respect to the injective model structures.
This proves $(i)$, and $(ii)$ is obvious.

As $\cpx{\ind{F}}$ preserves quasi-isomorphisms, it induces $\D{\ind{F}}: \D{\ind{\mb{T}}} \to \D{\ind{\mb{T}'}}$ by the universal property of the localization.
The derived functor of an exact functor between Abelian categories is $\trun$-exact with respect to the natural $\trun$-structures, so $\D{\ind{F}}$ is $\trun$-exact.
Let us check that $\D{\ind{F}}$ is conservative.
As $\D{\ind{F}}$ is an exact functor between stable qcategories, it suffices to check that it reflects zero objects.
The natural $\trun$-structure on the derived qcategory of an Abelian category is nondegenerate, so it suffices to show that, for each object $K$ of the heart $\D{\ind{\mb{T}}}\scr{\heartsuit}{}$, $\D{\ind{F}}\prns{K} = 0$ implies $K = 0$.
In this case, we may assume $K$ is concentrated in degree zero, given by an object $V$ of $\mb{T}$.
As $\ind{F}$ is faithful, we have a commutative square
\[
\xymatrix{
\pi_0\map{K}{K}_{\D{\ind{\mb{T}}}}
\ar[r]^-{\sim}
\ar[d]_-{\D{\ind{F}}}
&
\hom{V}{V}_{\ind{\mb{T}}}
\ar@{^{(}->}[d]^-{\ind{F}}
\\
\pi_0\map{\D{\ind{F}}\prns{K}}{\D{\ind{F}}\prns{K}}_{\D{\ind{\mb{T}'}}}
\ar[r]^-{\sim}
&
\hom{\ind{F}\prns{V}}{\ind{F}\prns{V}}_{\ind{\mb{T}'}}
}
\]
in which the vertical arrow on the right is injective and $(iii)$ follows.
\end{proof}

\begin{prop}
\label{tannakian.7}
Let $\mb{T}^{\otimes}$ be a $\mf{U}$-small Tannakian category.
Then $\cpx{\ind{\mb{T}}}\scr{\otimes}{\mr{inj}}$ is a left proper, stable, $\mf{U}$-tractable symmetric monoidal model category satisfying the monoid axiom.
\end{prop}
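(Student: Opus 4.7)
The plan is to verify each of the listed properties in turn, handling stability, $\mf{U}$-combinatoriality, $\mf{U}$-tractability, and left properness by quick arguments, and then reducing the symmetric monoidal axioms and the monoid axiom to a single key acyclicity claim. Since $\mb{T}$ is essentially $\mf{U}$-small abelian, $\ind{\mb{T}}$ is $\mf{U}$-Grothendieck abelian by \ref{tannakian.1}, which supplies the $\mf{U}$-combinatorial injective model structure on $\cpx{\ind{\mb{T}}}$ with stable, locally $\mf{U}$-presentable underlying qcategory $\D{\ind{\mb{T}}}$; this handles stability and combinatoriality at once. Since the cofibrations are precisely the monomorphisms, every object is cofibrant, so $\mf{U}$-tractability is automatic and left properness follows from the classical long-exact-cohomology-sequence argument (pushouts of quasi-isomorphisms along monomorphisms are quasi-isomorphisms). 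The symmetric monoidal structure on $\cpx{\ind{\mb{T}}}\scr{\otimes}{}$ then comes from \ref{tannakian.1}\textup{(iii)}, applied to the cocontinuous and exact symmetric monoidal structure on $\ind{\mb{T}}^{\otimes}$ extending that of $\mb{T}^{\otimes}$ (exactness on $\ind{\mb{T}}$ following from exactness on $\mb{T}$ via exactness of filtered colimits in Grothendieck abelian categories).

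I would next reduce both the pushout-product axiom and the monoid axiom to the following \emph{key claim}: for any acyclic $M \in \cpx{\ind{\mb{T}}}$ and any $N \in \cpx{\ind{\mb{T}}}$, the tensor product $M \otimes N$ is acyclic. Granting this claim, the standard ``tensor of two short exact sequences'' calculation in an abelian category with $\otimes$ exact in each variable shows that for monomorphisms $i \colon K \hookrightarrow K'$ and $j \colon L \hookrightarrow L'$, the pushout-product $(K' \otimes L) \sqcup_{K \otimes L} (K \otimes L') \to K' \otimes L'$ is a monomorphism with cokernel $(K'/K) \otimes (L'/L)$; this cokernel is acyclic whenever either $i$ or $j$ is a trivial cofibration, yielding the pushout-product axiom. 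For the monoid axiom, tensoring a trivial cofibration $j \colon A \hookrightarrow B$ with any object $X$ produces a monomorphism with cokernel $(B/A) \otimes X$, acyclic by the key claim, hence itself a trivial cofibration; closure of trivial cofibrations under pushout and transfinite composition then finishes the monoid axiom.

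The hard part, and the main obstacle, will be the key claim itself: for a general Grothendieck abelian symmetric monoidal category, ``acyclic tensored with anything is acyclic'' can easily fail for unbounded complexes absent a K-flatness-type hypothesis, so the Tannakian structure must be exploited essentially. My approach is Tannakian reduction to the case of vector spaces. By \ref{tannakian.5}\textup{(i)}, there is a field extension $\KK \hookrightarrow \KK'$ and a faithful, $\KK$-linear, exact symmetric monoidal fiber functor $\omega^{\otimes} \colon \mb{T}^{\otimes} \to \mod{\ab}_{\KK'}\scr{\otimes}{}$ whose image lies in the essentially $\mf{U}$-small subcategory $\mod{\ab}_{\KK'}^{\vee}$ of finite-dimensional $\KK'$-vector spaces. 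Arguing as in \ref{tannakian.6}, the ind-extension $\ind{\omega} \colon \ind{\mb{T}} \to \ind{\mod{\ab}_{\KK'}^{\vee}} \simeq \mod{\ab}_{\KK'}$ is cocontinuous, exact, faithful, and symmetric monoidal, hence conservative (a faithful exact functor between abelian categories is conservative), so its extension $\cpx{\ind{\omega}}$ to chain complexes commutes with cohomology and therefore reflects acyclicity. Downstairs in $\cpx{\mod{\ab}_{\KK'}}$, every complex is K-flat since $\KK'$ is a field, and acyclic-tensored-with-anything is acyclic by a classical argument. Applying the symmetric monoidal $\cpx{\ind{\omega}}$ to $M \otimes N$ and invoking acyclicity-reflection upstairs then yields the key claim and completes the proof.
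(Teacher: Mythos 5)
Your proof is correct and rests on exactly the same key idea as the paper's: reduce everything to complexes of $\KK'$-vector spaces along the faithful, exact, cocontinuous symmetric monoidal fiber functor $\cpx{\ind{\omega}}$, which preserves and reflects monomorphisms and quasi-isomorphisms. The only difference is organizational: the paper transports the pushout-product axiom wholesale (quoting its validity for $\cpx{\mod{\ab}_{\KK'}}\scr{\otimes}{\mr{inj}}$) and then obtains left properness and the monoid axiom from the fact that every object is cofibrant via Schwede--Shipley, whereas you funnel both axioms through a single acyclicity lemma together with the explicit identification of the cokernel of the pushout-product, which is a slightly more self-contained but equivalent bookkeeping of the same reduction.
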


\begin{proof}
The $\mf{U}$-tractability follows from the remark that all objects is cofibrant.
The stability follows from \cite[1.3.4.24, 1.4.2.27]{Lurie_higher-algebra}.
Let us show that $\cpx{\ind{\mb{T}}}\scr{\otimes}{\mr{inj}}$ is a symmetric monoidal model category.
Choose $\omega^{\otimes}: \mb{T}^{\otimes} \to \mod{\ab}_{\KK'}\scr{\otimes}{}$ as in \ref{tannakian.5}$(i)$.
As the canonical $\mf{U}$-cocontinuous symmetric monoidal functor $\ind{\mod{\ab}_{\KK'}\scr{}{\aleph_0}}\scr{\otimes}{} \to \mod{\ab}_{\KK'}\scr{\otimes}{}$ is an equivalence, $\omega^{\otimes}$ induces a $\KK$-linear, exact, faithful, $\mf{U}$-cocontinuous symmetric monoidal functor $\cpx{\ind{\mb{T}}}\scr{\otimes}{} \to \cpx{\mod{\ab}_{\KK'}}\scr{\otimes}{}$ by \ref{tannakian.6}, which we abusively denote by $\omega^{\otimes}$.

As $\1{\mb{T}}$ is cofibrant, it remains to establish the pushout-product axiom.
Let $f: K \to K'$ and $g: L \to L'$ be two cofibrations of $\cpx{\ind{\mb{T}}}\scr{}{\mr{inj}}$, i.e., two monomorphisms.
We claim that the canonical morphism $f \mathrel{\Box} g: \prns{K \otimes L'} \amalg_{K \otimes L} \prns{K' \otimes L} \to K' \otimes L'$ is a monomorphism, and that it is moreover a quasi-isomorphism if $f$ or $g$ is.
The image of $f \mathrel{\Box} g$ under $\omega$ is the morphism $\omega\prns{f} \mathrel{\Box} \omega\prns{g}$.
Faithful, exact functors preserve and reflect monomorphisms, so the pushout-product axiom in $\cpx{\mod{\ab}_{\KK'}}\scr{\otimes}{\mr{inj}}$, which holds by \cite[2.3]{Drew_verdier-quotients}, implies that $f \mathrel{\Box} g$ is a monomorphism.
Similarly, the faithful, exact functor $\omega$ preserves and reflects quasi-isomorphisms, so if $f$ or $g$ is a quasi-isomorphism, then $\omega\prns{f}$ or $\omega\prns{g}$ is.
This implies that $\omega\prns{f} \mathrel{\Box} \omega\prns{g}$, and hence also $f \mathrel{\Box} g$, is a quasi-isomorphism.
Hence, $\cpx{\ind{\mb{T}}}\scr{}{\mr{inj}}$ is a symmetric monoidal model category.
Since each object is cofibrant, it is left proper and satisfies the monoid axiom (\cite[3.4]{Schwede-Shipley_algebras-and-modules}).
\end{proof}

\begin{lemma}
\label{tannakian.8}
Let $\mb{T}^{\otimes}$ be a $\mf{U}$-small Tannakian category.
The category $\calg{\cpx{\ind{\mb{T}}}\scr{\otimes}{}}$ admits a $\mf{U}$-combinatorial model structure whose weak equivalences \resp{fibrations} are the morphisms inducing quasi-isomorphisms \resp{fibrations} between the underlying objects of $\cpx{\ind{\mb{T}}}\scr{}{\mr{inj}}$.
\end{lemma}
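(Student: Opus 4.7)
The plan is to construct the desired model structure as the transfer of the injective model structure of \ref{tannakian.7} along the free-forgetful adjunction
\[
\on{Sym}: \cpx{\ind{\mb{T}}}\scr{\otimes}{\mr{inj}} \rightleftarrows \calg{\cpx{\ind{\mb{T}}}\scr{\otimes}{}} : U.
\]
I would declare a morphism of commutative algebras to be a weak equivalence or a fibration precisely when its image under $U$ is one, and take as generating (trivial) cofibrations the set $\on{Sym}\prns{I}$ \resp{$\on{Sym}\prns{J}$}, where $I$ and $J$ generate $\cpx{\ind{\mb{T}}}\scr{}{\mr{inj}}$.

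To exhibit this transferred structure I would invoke Smith's combinatorial recognition principle (\emph{cf.}\ \cite[A.2.6.8]{Lurie_higher-topos}). The formal prerequisites are immediate: $\calg{\cpx{\ind{\mb{T}}}\scr{\otimes}{}}$ is locally $\mf{U}$-presentable, as the category of algebras for an accessible monad on a locally $\mf{U}$-presentable category; $U$ preserves $\mf{U}$-filtered colimits, since $\on{Sym}$ does; and the proposed generating sets are $\mf{U}$-small. The substantive hypothesis is that every transfinite composite of pushouts (formed in $\calg{\cpx{\ind{\mb{T}}}\scr{\otimes}{}}$) of a morphism in $\on{Sym}\prns{J}$ has underlying map a quasi-isomorphism in $\cpx{\ind{\mb{T}}}$: the \emph{commutative monoid axiom} of White.

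The main obstacle is this last condition. My plan is to pull back along the faithful, exact, $\mf{U}$-cocontinuous symmetric monoidal functor $\omega^{\otimes}: \cpx{\ind{\mb{T}}}\scr{\otimes}{} \to \cpx{\mod{\ab}_{\KK'}}\scr{\otimes}{}$ furnished by \ref{tannakian.5}$(i)$ and \ref{tannakian.6}$(i)$. Because $\omega^{\otimes}$ is symmetric monoidal and $\mf{U}$-cocontinuous, the induced functor on commutative algebras intertwines the respective free commutative algebra functors and commutes with the pushouts and transfinite composites appearing in the axiom; because $\omega$ is moreover exact and faithful on the underlying Abelian categories, it both preserves and reflects quasi-isomorphisms, exactly as exploited in the proof of \ref{tannakian.7}. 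The commutative monoid axiom for $\cpx{\ind{\mb{T}}}\scr{\otimes}{\mr{inj}}$ thus reduces to the corresponding statement for $\cpx{\mod{\ab}_{\KK'}}\scr{\otimes}{\mr{inj}}$, which is classical in characteristic zero: the invertibility of $n!$ in $\KK'$ realizes each symmetric power $\on{Sym}^n$ as a functorial retract of the tensor power $\prns{-}^{\otimes n}$, so the verification falls back on the ordinary monoid axiom already established in \ref{tannakian.7}. Combined with the $\mf{U}$-combinatoriality supplied by the $\mf{U}$-small generating sets, this produces the claimed model structure.
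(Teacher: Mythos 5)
Your proposal is correct and follows the same skeleton as the paper's proof: local presentability of $\calg{\cpx{\ind{\mb{T}}}\scr{\otimes}{}}$, existence of the transferred structure via White's commutative monoid axiom, and verification of that axiom with the help of the fiber functor $\omega^{\otimes}$ of \ref{tannakian.5} and \ref{tannakian.6}. The one genuine divergence is the last step: the paper reduces along $\overline{\omega}^{\otimes}$ and then invokes the fact that $\cpx{\mod{\ab}_{\KK'}}\scr{\otimes}{}$ is freely powered (\cite[7.1.4.7]{Lurie_higher-algebra}), so that $\prns{\overline{\omega}f}^{\Box n}$ is a projective trivial cofibration of $\mf{S}_n$-objects and $\prns{-}/\mf{S}_n$, being left Quillen for the projective structure, yields the trivial cofibration $\prns{\overline{\omega}f}^{\Box n}/\mf{S}_n$; you instead use the characteristic-zero averaging idempotent to exhibit $f^{\Box n}/\mf{S}_n$ as a functorial retract of $f^{\Box n}$, which is a trivial cofibration by the pushout-product axiom of \ref{tannakian.7}. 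Your route is more elementary, and in fact it makes the reduction along $\omega^{\otimes}$ unnecessary: since $\hom{\1{\mb{T}}}{\1{\mb{T}}}_{\mb{T}}$ is a field of characteristic zero, $\cpx{\ind{\mb{T}}}$ is itself $\mb{Q}$-linear and the retract argument can be run there directly. Three small inaccuracies to repair in the write-up: the transfer condition (underlying maps of relative $\sym{J}$-cell complexes are quasi-isomorphisms) is not literally White's commutative monoid axiom\textemdash it is the conclusion of \cite[3.2]{White_model-structures}, whose hypothesis is the commutative monoid axiom, so you should cite that theorem (as the paper does) rather than Smith's recognition principle alone; the phrase ``$U$ preserves $\mf{U}$-filtered colimits, since $\sym{}$ does'' is a non sequitur (the correct reason is that filtered colimits of commutative algebras are computed on underlying objects); and the final appeal should be to the pushout-product axiom of \ref{tannakian.7}, not to the monoid axiom, which concerns tensoring trivial cofibrations with arbitrary objects and plays no role here.
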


\begin{proof}
By \cite[3.2.3.5]{Lurie_higher-algebra} and \ref{tannakian.7}, $\calg{\cpx{\ind{\mb{T}}}\scr{\otimes}{}}$ is locally $\mf{U}$-presentable.
It therefore suffices to construct a cofibrantly generated model structure with the prescribed weak equivalences and fibrations.
Let $f: K \to L$ be a morphism of $\cpx{\ind{\mb{T}}}$.
We have the pushout-product morphism $f^{\Box 2} := f \mathrel{\Box} f: \prns{K \otimes L} \amalg_{K \otimes K} \prns{L \otimes K} \to L \otimes L$.
Iterating, we obtain morphisms $f^{\Box n}$ for $n \in \mb{Z}_{\geq0}$, which are $\mf{S}_n$-equivariant with respect to the $\mf{S}_n$-actions permuting factors of the tensor products appearing in the domain and codomain.
We thus regard $f^{\Box n}$ as a morphism of $\cpx{\ind{\mb{T}}}\scr{\mf{S}_n}{}$, the category of functors from the groupoid $\mf{S}_n$ into $\cpx{\ind{\mb{T}}}$.
The functor $\cpx{\ind{\mb{T}}} \to \cpx{\ind{\mb{T}}}\scr{\mf{S}_n}{}$ sending $K$ to itself with the trivial $\mf{S}_n$-action admits a left adjoint, the \emph{$\mf{S}_n$-coinvariants} functor $\prns{-}/\mf{S}_n$.

By \cite[3.2]{White_model-structures}, the existence of a cofibrantly generated model structure on the category $\calg{\cpx{\ind{\mb{T}}}\scr{\otimes}{}}$ with the prescribed weak equivalences will follow if we show that $\cpx{\ind{\mb{T}}}\scr{\otimes}{}$ satisfies the \emph{commutative monoid axiom} (\cite[3.1]{White_model-structures}):
for each trivial cofibration $f: K \to L$ of $\cpx{\ind{\mb{T}}}\scr{\otimes}{}$ and each $n \in \mb{Z}_{>0}$, the morphism $f^{\Box n}/\mf{S}_n$ is a trivial cofibration in $\cpx{\ind{\mb{T}}}\scr{}{\mr{inj}}$.

As in \ref{tannakian.5}, choose a $\KK$-linear, faithful, exact symmetric monoidal functor $\omega^{\otimes}: \mb{T}^{\otimes} \to \mod{\ab}_{\KK'}\scr{\otimes}{\aleph_0}$ for some field extension $\hom{\1{\mb{T}}}{\1{\mb{T}}}_{\mb{T}} \hookrightarrow \KK'$, let $\overline{\omega}^{\otimes} := \cpx{\ind{\omega}}\scr{\otimes}{}$, and let $f$ be a trivial cofibration of $\cpx{\ind{\mb{T}}}$ and $n \in \mb{Z}_{> 0}$.
By \ref{tannakian.6}, $\overline{\omega}$ reflects trivial cofibrations, so it suffices to show that $\overline{\omega}\prns{f^{\Box n}/\mf{S}_n}$ is a trivial cofibration.
On the other hand, $\overline{\omega}$ also preserves trivial cofibrations by \ref{tannakian.6}, so $\overline{\omega}f$ is a trivial cofibration.
Since $\overline{\omega}^{\otimes}$ is symmetric monoidal and $\mf{U}$-cocontinuous we have $\overline{\omega}\prns{f^{\Box n}/\mf{S}_n} \simeq \prns{\overline{\omega}f}^{\Box n}/\mf{S}_n$.
The claim therefore follows from the fact that $\cpx{\mod{\ab}_{\KK'}}\scr{\otimes}{}$ is freely powered (\cite[7.1.4.7]{Lurie_higher-algebra}), hence $\prns{\overline{\omega}f}^{\Box n}$ is a projective trivial cofibration in $\cpx{\mod{\ab}_{\KK'}}\scr{\mf{S}_n}{}$.
Indeed, as $\prns{-}/\mf{S}_n$ is left Quillen with respect to the projective model structure on its domain, $\prns{\overline{\omega}f}^{\Box n}/\mf{S}_n$ is a trivial cofibration, as desired.
\end{proof}

\begin{lemma}
\label{tannakian.9}
Consider the following data:
\begin{enumerate}
\item
$\mb{T}^{\otimes}$, a $\mf{U}$-small Tannakian category;
\item
$\KK := \hom{\1{\mb{T}}}{\1{\mb{T}}}_{\mb{T}} \hookrightarrow \KK'$, a field extension;
\item
$\omega^{\otimes}: \mb{T}^{\otimes} \to \mod{\ab}_{\KK'}\scr{\otimes}{\aleph_0}$, a $\KK$-linear exact symmetric monoidal functor;
\item
$\mf{W}$ \resp{$\mf{W}'$}, the class of weak equivalences in the model structure of \textup{\ref{tannakian.8}} on the category $\calg{\cpx{\ind{\mb{T}}}\scr{\otimes}{}}$ \resp{$\calg{\cpx{\mod{\ab}_{\KK'}}\scr{\otimes}{}}$}; and
\item
$\mc{C}$, a $\mf{U}$-small category.
\end{enumerate}
If the forgetful functor $\psi': \calg{\cpx{\mod{\ab}_{\KK'}}\scr{\otimes}{}} \brk{\mf{W}'^{-1}} \to \D{\mod{\ab}_{\KK'}}$ preserves $\mc{C}$-indexed colimits, then so does the forgetful functor $\psi: \calg{\cpx{\ind{\mb{T}}}\scr{\otimes}{}} \brk{\mf{W}^{-1}} \to \D{\ind{\mb{T}}}$.
\end{lemma}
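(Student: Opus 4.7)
The plan is to transport the $\mc{C}$-cocontinuity of $\psi'$ back along the fiber functor $\omega$ using conservativity of $\D{\ind{\omega}}$. By \ref{tannakian.6}, the functor $\overline{\omega} := \cpx{\ind{\omega}}: \cpx{\ind{\mb{T}}}\scr{\otimes}{\mr{inj}} \to \cpx{\mod{\ab}_{\KK'}}\scr{\otimes}{\mr{inj}}$ is a $\KK$-linear, faithful, exact, $\mf{U}$-cocontinuous symmetric monoidal left Quillen functor, whose induced functor $\D{\ind{\omega}}$ on derived qcategories is conservative. Moreover, $\D{\ind{\omega}}$ is $\mf{U}$-cocontinuous: every object of the injective model structure is cofibrant, so $\D{\ind{\omega}}$ is computed by $\overline{\omega}$ itself and inherits its left adjoint property at the qcategorical level.

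The main technical step is to verify that post-composition with $\overline{\omega}$ yields a left Quillen functor $\omega_*: \calg{\cpx{\ind{\mb{T}}}\scr{\otimes}{}} \to \calg{\cpx{\mod{\ab}_{\KK'}}\scr{\otimes}{}}$ between the CAlg model structures of \ref{tannakian.8}. These are transferred along the free-commutative-algebra left adjoint $\sym{}$, so their generating (trivial) cofibrations are of the form $\sym{j}$ for $j$ a generating (trivial) cofibration of the underlying injective model structure. Since $\overline{\omega}$ is symmetric monoidal and left Quillen, $\omega_*(\sym{j}) \simeq \sym{\overline{\omega}(j)}$ is the image of a (trivial) cofibration under the left Quillen functor $\sym{}$, hence itself a (trivial) cofibration. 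Consequently $\omega_*$ is left Quillen, its derived qcategorical functor is $\mf{U}$-cocontinuous, and together with the forgetful functors $\psi$ and $\psi'$ (which preserve weak equivalences by definition of the CAlg model structures) one obtains an essentially commutative square
\[
\xymatrix@C=2em{
\calg{\cpx{\ind{\mb{T}}}\scr{\otimes}{}}\brk{\mf{W}^{-1}}
\ar[r]^-{\omega_*}
\ar[d]_-{\psi}
&
\calg{\cpx{\mod{\ab}_{\KK'}}\scr{\otimes}{}}\brk{\mf{W}'^{-1}}
\ar[d]^-{\psi'}
\\
\D{\ind{\mb{T}}}
\ar[r]^-{\D{\ind{\omega}}}
&
\D{\mod{\ab}_{\KK'}}
}
\]
whose commutativity is tautological at the underlying model-category level, since $\overline{\omega}$ commutes with the forgetful functors to complexes.

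To conclude, I would fix a diagram $X: \mc{C} \to \calg{\cpx{\ind{\mb{T}}}\scr{\otimes}{}}\brk{\mf{W}^{-1}}$ with colimit $X_{\infty}$ and analyze the canonical comparison $\alpha: \colim_{\mc{C}} \psi(X) \to \psi(X_{\infty})$ in $\D{\ind{\mb{T}}}$. Applying $\D{\ind{\omega}}$ and invoking the commutative square together with the $\mf{U}$-cocontinuity of both $\D{\ind{\omega}}$ and $\omega_*$, the morphism $\D{\ind{\omega}}(\alpha)$ identifies with the canonical map $\colim_{\mc{C}} \psi'(\omega_* X) \to \psi'(\colim_{\mc{C}} \omega_* X)$, which is an equivalence by the hypothesis that $\psi'$ preserves $\mc{C}$-indexed colimits. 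Conservativity of $\D{\ind{\omega}}$ then forces $\alpha$ to be an equivalence, as required. I expect the principal subtlety to be the verification that $\omega_*$ is left Quillen, which hinges on extracting the explicit description of the generating cofibrations of the CAlg model structure from the White-style construction underlying \ref{tannakian.8}.
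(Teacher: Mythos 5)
Your argument is correct and follows essentially the same skeleton as the paper's proof: both transport the statement along the fiber functor, using the (strictly commutative, hence homotopy commutative) square $\psi'\calg{\overline{\omega}^{\otimes}} \simeq \overline{\omega}\psi$, the conservativity and $\mf{U}$-cocontinuity of the derived functor $\D{\ind{\omega}}$ from \ref{tannakian.6}, the hypothesis on $\psi'$, and then conclude by the same diagram chase. The one point where you genuinely diverge is in establishing that the induced functor on commutative algebras is $\mf{U}$-cocontinuous after inverting $\mf{W}$: the paper stays qcategorical, noting that the cocontinuous symmetric monoidal $\overline{\omega}^{\otimes}$ admits a lax symmetric monoidal right adjoint $\upsilon^{\otimes}$ (\cite[7.3.2.7]{Lurie_higher-algebra}), so that $\calg{\upsilon^{\otimes}}$ is right adjoint to $\calg{\overline{\omega}^{\otimes}}$, which is therefore cocontinuous; you instead argue at the model level that $\omega_{*}$ is left Quillen for the transferred structures of \ref{tannakian.8}, using that the generating (trivial) cofibrations are of the form $\sym{j}$ and that $\omega_{*}\sym{j}\cong\sym{\overline{\omega}\prns{j}}$. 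That route works, but note the small gap you should fill: preservation of the \emph{generating} (trivial) cofibrations yields ``left Quillen'' only once you also know that $\omega_{*}$ is a left adjoint (equivalently cocontinuous), so that arbitrary (trivial) cofibrations, being retracts of transfinite compositions of pushouts of generators, are preserved. This is true and easy\textemdash since $\overline{\omega}$ is strong symmetric monoidal and $\mf{U}$-cocontinuous, its right adjoint is canonically lax symmetric monoidal and induces a right adjoint on commutative algebra objects, which is precisely the $1$-categorical shadow of the paper's adjunction argument\textemdash but it must be said. With that supplied, your model-level verification is a slightly more explicit, elementary substitute for the paper's appeal to \cite[7.3.2.7]{Lurie_higher-algebra}, at the cost of unwinding the generating sets of White's transferred model structure.
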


\begin{proof}
Suppose $\psi'$ preserves $\mc{C}$-indexed colimits.
Let $\overline{\omega}^{\otimes} := \cpx{\ind{\omega}}\scr{\otimes}{}$ and let $\calg{\overline{\omega}^{\otimes}}: \calg{\cpx{\ind{\mb{T}}}\scr{\otimes}{}} \brk{\mf{W}^{-1}} \to \calg{\cpx{\mod{\ab}_{\KK'}}\scr{\otimes}{}} \brk{\mf{W}'^{-1}}$ denote the induced functor.
It sends $\mf{W}$ to $\mf{W}'$ by \ref{tannakian.6}.
We claim that there is a homotopy equivalence $\psi' \calg{\overline{\omega}^{\otimes}} \simeq \overline{\omega} \psi$.
Indeed, the corresponding square of model categories is essentially commutative by inspection, and each functor involved preserves weak equivalences.
Passing to underlying qcategories, we obtain the desired homotopy commutative square.

Let $\gamma \mapsto A_{\gamma}: \mc{C} \to \calg{\cpx{\ind{\mb{T}}}\scr{\otimes}{}}$ be a functor.
We claim that the canonical morphism $\colim_{\gamma \in \mc{C}} \psi A_{\gamma} \to \psi \colim_{\gamma \in \mc{C}} A_{\gamma}$ is an equivalence.
As $\overline{\omega}$ reflects weak equivalences by \ref{tannakian.6}, it suffices to show that
$
\overline{\omega} \colim_{\gamma \in \mc{C}} \psi A_{\gamma}
\simeq
\overline{\omega} \psi \colim_{\gamma \in \mc{C}} A_{\gamma}
$
is an equivalence.
Note that $\overline{\omega}^{\otimes}$ is $\mf{U}$-cocontinuous.
In particular, it admits a lax symmetric monoidal right adjoint $\upsilon^{\otimes}$ (\cite[7.3.2.7]{Lurie_higher-algebra}), and $\calg{\upsilon^{\otimes}}$ is right adjoint to $\calg{\overline{\omega}^{\otimes}}$, which is thus $\mf{U}$-cocontinuous. 
These remarks, along with the hypothesis that $\psi'$ preserve $\mc{C}$-indexed colimits, provide a homotopy commutative diagram
\[
\xymatrix{
\colim_{\gamma \in \mc{C}} \psi' \calg{\overline{\omega}^{\otimes}} A_{\gamma}
\ar[r]_-{\sim}
\ar[d]^-{\sim}
&
\colim_{\gamma \in \mc{C}} \overline{\omega} \psi A_{\gamma}
\ar[r]_-{\sim}
&
\overline{\omega} \colim_{\gamma \in \mc{C}} \psi A_{\gamma}
\ar[d]
\\
\psi' \colim_{\gamma \in \mc{C}} \calg{\overline{\omega}^{\otimes}} A_{\gamma}
\ar[r]^-{\sim}
&
\psi' \calg{\overline{\omega}^{\otimes}} \colim_{\gamma \in \mc{C}} A_{\gamma}
\ar[r]^-{\sim}
&
\overline{\omega} \psi \colim_{\gamma \in \mc{C}} A_{\gamma}
}
\]
and the claim follows.
\end{proof}

\begin{thm}
\label{tannakian.10}
Let $\mb{T}^{\otimes}$ be a $\mf{U}$-small Tannakian category.
If $\mf{W}$ denotes the class of morphisms of $\calg{\cpx{\ind{\mb{T}}}\scr{\otimes}{}}$ inducing quasi-isomorphisms between the underlying objects of $\cpx{\ind{\mb{T}}}$, then the canonical functor 
$
\phi:
\calg{\cpx{\ind{\mb{T}}}\scr{\otimes}{}} \brk{\mf{W}^{-1}} 
\to 
\calg{\D{\ind{\mb{T}}}\scr{\otimes}{}}
$
is an equivalence.
\end{thm}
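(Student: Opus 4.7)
The plan is to appeal to a general rectification theorem for commutative algebras in a symmetric monoidal model category, in the spirit of Lurie \cite[4.5.4.7]{Lurie_higher-algebra}, or equivalently to proceed directly via the Barr--Beck--Lurie monadicity theorem. In either approach, the hypotheses have been systematically arranged in the earlier results of this section.

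I would first observe that $\phi$ fits into a commutative square with the forgetful functors to $\D{\ind{\mb{T}}}$ on both sides, and then verify that each of these forgetful functors is monadic with a monad that can be identified. The source $\calg{\cpx{\ind{\mb{T}}}\scr{\otimes}{}}\brk{\mf{W}^{-1}}$ is locally $\mf{U}$-presentable via \cite[1.3.4.22]{Lurie_higher-algebra} applied to the model category of \ref{tannakian.8}; the target $\calg{\D{\ind{\mb{T}}}\scr{\otimes}{}}$ is locally $\mf{U}$-presentable by \cite[3.2.3.5]{Lurie_higher-algebra}, and its forgetful functor preserves sifted colimits and is conservative by \cite[3.2.3.1]{Lurie_higher-algebra}. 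On the source side, conservativity is immediate from the definition of $\mf{W}$ in \ref{tannakian.8}, and sifted-colimit preservation follows from \ref{tannakian.9}, applied to any sifted indexing category $\mc{C}$, together with the classical fact that the analogous forgetful functor for commutative differential graded $\KK'$-algebras preserves sifted colimits.

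The Barr--Beck--Lurie theorem then identifies both sides of $\phi$ with the qcategory of algebras over a monad on $\D{\ind{\mb{T}}}$, and it remains to show that the two monads coincide. Both should be the derived free commutative algebra monad $K \mapsto \bigoplus_{n \geq 0} K^{\otimes n}/\mf{S}_n$, the agreement on a cofibrant $K$ being a consequence of the freely powered property established in the proof of \ref{tannakian.8}: the symmetric power functor $\prns{-}^{\Box n}/\mf{S}_n$ preserves weak equivalences between cofibrant objects, since the faithful, exact symmetric monoidal functor $\overline{\omega}^{\otimes}$ of \ref{tannakian.6} reduces this to the freely powered case $\mb{T}^{\otimes} = \mod{\ab}_{\KK'}\scr{\otimes}{\aleph_0}$ of \cite[7.1.4.7]{Lurie_higher-algebra}, and every object of $\cpx{\ind{\mb{T}}}$ is cofibrant. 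The main obstacle is the sifted-colimit preservation on the source side, which is precisely why \ref{tannakian.9} was established; once it is in hand, the identification of the monads and the resulting conclusion that $\phi$ is an equivalence are formal.
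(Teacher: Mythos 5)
Your proposal is correct and follows essentially the same route as the paper: the paper verifies the hypotheses of Lurie's comparison criterion \cite[4.7.4.16]{Lurie_higher-algebra} (a Barr--Beck--Lurie-type argument over $\D{\ind{\mb{T}}}$), using \ref{tannakian.9} for preservation of geometric realizations on the source side and the fiber-functor reduction via $\overline{\omega}^{\otimes}$ to the freely powered case $\cpx{\mod{\ab}_{\KK'}}\scr{\otimes}{}$ of \cite[7.1.4.7, 4.5.4.7]{Lurie_higher-algebra} to identify the free (symmetric power) functors, exactly as you propose. The only cosmetic difference is that you invoke monadicity directly rather than citing \cite[4.7.4.16]{Lurie_higher-algebra}, and note that for the Barr--Beck step only $\Delta^{\mr{op}}$-indexed (geometric realization) colimits are needed, which is the case covered by \ref{tannakian.9}.
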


\begin{proof}
As in \ref{tannakian.5}, choose a $\KK$-linear, faithful, exact symmetric monoidal functor $\omega^{\otimes}: \mb{T}^{\otimes} \to \mod{\ab}_{\KK'}\scr{\otimes}{\aleph_0}$ for some field extension $\hom{\1{\mb{T}}}{\1{\mb{T}}}_{\mb{T}} \hookrightarrow \KK'$ and set $\overline{\omega}^{\otimes} := \cpx{\ind{\omega}}\scr{\otimes}{}$.
By \cite[7.1.4.7, 4.5.4.7]{Lurie_higher-algebra}, $\calg{\cpx{\mod{\ab}_{\KK'}\scr{\otimes}{}}} \brk{\mf{W}'^{-1}} \to \calg{\D{\mod{\ab}_{\KK'}}\scr{\otimes}{}}$ is an equivalence, where $\mf{W}'$ denotes the class of morphisms inducing quasi-isomorphisms between the underlying complexes of $\KK'$-modules.
To prove the claim, we use this special case and the properties of $\overline{\omega}^{\otimes}$ to show that the conditions of \cite[4.7.4.16]{Lurie_higher-algebra} are satisfied.
Consider the diagram
\[
\xymatrix{
\calg{\cpx{\ind{\mb{T}}}\scr{\otimes}{}} \brk{\mf{W}^{-1}}
\ar[rr]_-{\phi}
\ar[dr]_-G
&
&
\calg{\D{\ind{\mb{T}}}\scr{\otimes}{}}
\ar[dl]^-{G'}
\\
&
\D{\ind{\mb{T}}}
}
\]
\begin{enumerate}[topsep=0ex, itemsep=0ex, label=(\arabic*), itemindent=\parindent]
\item
The qcategories $\D{\ind{\mb{T}}}$ and $\calg{\cpx{\ind{\mb{T}}}\scr{\otimes}{}} \brk{\mf{W}^{-1}}$ are locally $\mf{U}$-presentable by \ref{tannakian.1}$(ii)$ and \ref{tannakian.8}: the qcategory underlying a $\mf{U}$-combinatorial model category is locally $\mf{U}$-presentable (\cite[1.3.4.22]{Lurie_higher-algebra}).
The qcategory $\calg{\D{\ind{\mb{T}}} \scr{\otimes}{} }$ is also locally $\mf{U}$-presentable by \cite[4.1.4.8, 3.2.3.5]{Lurie_higher-algebra} and \ref{tannakian.7}.
The forgetful functor 
\[
\calg{\cpx{\ind{\mb{T}}}\scr{\otimes}{}} 
\to 
\cpx{\ind{\mb{T}}}
\]
admits a left adjoint given by the free commutative algebra functor, denoted by $\sym{}$, and these form a Quillen adjunction by definition of the model structure on $\calg{\cpx{\ind{\mb{T}}}\scr{\otimes}{}}$.
In particular, by \cite[1.3.4.27]{Lurie_higher-algebra}, as $G$ is obtained from a right Quillen functor by passing to underlying qcategories, it admits a left adjoint $F$.
By \cite[3.1.3.5]{Lurie_higher-algebra}, $G'$ also admits a left adjoint $F'$.
\item
The functor $G$  preserves geometric realizations of simplicial objects by \ref{tannakian.9} and \cite[7.1.4.7, 4.5.4.12]{Lurie_higher-algebra}.
The functor $G'$ preserves geometric realizations of simplicial objects by \cite[3.2.3.2]{Lurie_higher-algebra}.
The functor $G$ is conservative as it tautologically preserves weak equivalences, and $G'$ is conservative by \cite[3.2.2.6]{Lurie_higher-algebra}.
\item
We now claim that, for each $K \in \cpx{\ind{\mb{T}}}$, the canonical morphism $G'F'\prns{K} \to GF\prns{K}$ is an equivalence. 
As explained in step (e) of the proof of \cite[4.5.4.7]{Lurie_higher-algebra}, it suffices to prove that, for each $K$, the colimit defining the total symmetric power $\sym{K} := \coprod_{n \in \mb{Z}_{\geq0}} \on{sym}^n\prns{K}$ is a homotopy colimit.
Let $\on{\mb{L}\mr{sym}}$ denote the corresponding homotopy colimit functor.
As $\overline{\omega}^{\otimes}$ is symmetric monoidal, $\mf{U}$-cocontinuous and homotopically $\mf{U}$-cocontinuous, we have a homotopy commutative square
\[
\xymatrix{
\on{\mb{L}\mr{sym}}\prns{\overline{\omega} K}
\ar[r]_-{\sim}
\ar[d]
&
\overline{\omega}\on{\mb{L}\mr{sym}}\prns{K}
\ar[d]
\\
\sym{\overline{\omega}K}
\ar[r]^-{\sim}
&
\overline{\omega} \sym{K}
}
\]
and the left vertical arrow is an equivalence by \cite[7.1.4.7]{Lurie_higher-algebra} and step $(e)$ of the proof of \cite[4.5.4.7]{Lurie_higher-topos} applied to $\mb{A}^{\otimes} := \cpx{\mod{\ab}_{\KK'}}\scr{\otimes}{}$.
Since $\overline{\omega}$ is conservative (\ref{tannakian.6}), the claim follows.
Thus, the conditions of \cite[4.7.4.16]{Lurie_higher-algebra} are satisfied.
\qedhere
\end{enumerate}
\end{proof}

\section{Mixed Hodge coefficients}

\setcounter{thm}{-1}

\begin{notation}
\label{hodge.0}
Throughout this section, we fix $\KK \hookrightarrow \mb{R}$, a subfield of the real numbers.
\end{notation}

\begin{motivation}
While mixed Hodge structures arise very naturally in algebraic geometry, they tend to do so as the cohomology of much larger objects, to wit, mixed Hodge complexes.
There is thus a dichotomy between complexes of mixed Hodge structures, which are hard to construct but form a very well-behaved category, and mixed Hodge complexes, which are much easier to construct but, as a 1-category, leave much to be desired.
By a result of A.~Beilinson (\cite[3.11]{Beilinson_absolute-hodge}), their derived categories are nevertheless equivalent.
In this section, we translate this into an equivalence of symmetric monoidal qcategories.
\end{motivation}

\begin{summary}
We begin by reviewing the construction of the symmetric monoidal differential graded category of mixed Hodge complexes (\ref{hodge.1}, \ref{hodge.2}, \ref{hodge.3}).
Theorems \ref{hodge.6} and \ref{hodge.7} lift A.~Beilinson's equivalence (\cite[3.11]{Beilinson_absolute-hodge}) to an equivalence of symmetric monoidal qcategories.
\end{summary}

\begin{defn}
[{\cite[3.9]{Beilinson_absolute-hodge}}]
\label{hodge.1}
We recall the following constructions:
\begin{enumerate}
\item
A \emph{mixed Hodge $\KK$-complex} $\prns{K_{\cdot}, F, W, \alpha, \beta}$ is a diagram 
\begin{equation}
\label{hodge.1.1}
\prns{K_1, W}
\xrightarrow{\alpha}
\prns{K_2, W}
\xleftarrow{\beta}
\prns{K_3, F, W}
\end{equation}
in which $\prns{K_1, W}$ \resp{$\prns{K_2, W}$, resp. $\prns{K_3, F, W}$} is a filtered \resp{filtered, resp. bifiltered} complex of $\KK$-modules \resp{$\mb{C}$-modules, resp. $\mb{C}$-modules}, $\alpha$ is a filtered quasi-isomorphism $\prns{K_1 \otimes_{\KK} \mb{C}, W \otimes_{\KK} \mb{C}} \isom \prns{K_2, W}$, $\beta$ is a filtered quasi-isomorphism $\prns{K_3,W} \isom \prns{K_2, W}$ and the following conditions are satisfied:
\begin{enumerate}
\item
$\bigoplus_{n \in \mb{Z}}\h^n K_1$ is of finite rank; 
\item
for each $k \in \mb{Z}$, the differentials of the complex $\gr{K_3}^W_k$ are strictly compatible with the filtration induced by $F$; and
\item
for each $\prns{k, n} \in \mb{Z}^2$, the isomorphism $\beta^{-1}\alpha: \h^n\gr{K_1}^W_k \otimes_{\KK} \mb{C} \isom \h^n \gr{K_3}^W_k$ and the filtration induced by $F$ endow $\h^n \gr{K_1}^W_k$ with a pure Hodge structure of weight $k+n$.
\end{enumerate}
We refer to the filtrations denoted by ``$W$'' as \emph{weight} filtrations and those by ``$F$'' as \emph{Hodge} filtrations.
By convention, $W$ will always be increasing and $F$ will always be decreasing.
We will also frequently suppress the morphisms $\alpha$ and $\beta$ and refer abusively to the mixed Hodge $\KK$-complex $\prns{K_{\cdot}, F, W}$.
Note that this definition is not equivalent to \cite[3.2]{Beilinson_absolute-hodge}, but rather to \cite[3.9]{Beilinson_absolute-hodge}, the difference being the shift in the weights by the cohomological degree appearing in condition $(\textup{c})$ above.
It seems likely that the following techniques apply to both settings with slight modification.
\item
We say that a mixed Hodge $\KK$-complex $\prns{K_{\cdot}, F, W, \alpha, \beta}$ is \emph{polarizable} if, for each $\prns{k, n} \in \mb{Z}^2$, the pure Hodge $\KK$-structure $\prns{\h^n \gr{K_1}^W_k, F}$ is polarizable.
\item
We define a \emph{morphism of polarizable mixed Hodge $\KK$-complexes} $f: \prns{K_{\cdot}, F, W, \alpha, \beta} \to \prns{K'_{\cdot}, F, W, \alpha', \beta'}$ to be a morphism of diagrams, consisting of morphisms of (bi)filtered complexes $f_1: \prns{K_1, W} \to \prns{K'_1, W}$, $f_2: \prns{K_2, W} \to \prns{K'_2, W}$ and $f_3: \prns{K_3, F, W} \to \prns{K'_3, F, W}$ such that $\alpha' \prns{f_1\otimes_{\KK} \mb{C}} = \prns{f_2 \otimes_{\KK} \mb{C}} \alpha$ and $\beta' f_3 = f_3 \beta$.
Let $\MHC^{\mr{p}}_{\KK}$ denote the category polarizable mixed Hodge $\KK$-complexes and morphisms of such.

The category $\MHC^{\mr{p}}_{\KK}$ inherits a $\KK$-linear-differential-graded-category structure from the differential graded categories of filtered \resp{filtered, resp. bifiltered} complexes of $\KK$-modules \resp{$\mb{C}$-modules, resp. $\mb{C}$-modules} as explained in \cite[1.2.1]{Ivorra_mixed-hodge}.

With translations and cones defined in the evident way (\cite[1.2.6]{Ivorra_mixed-hodge}), $\MHC^{\mr{p}}_{\KK}$ is pretriangulated.
Note that $\MHC^{\mr{p}}_{\KK}$ is locally $\mf{U}$-small, but neither essentially $\mf{U}$-small nor locally $\mf{U}$-presentable, so we will view it as a $\mf{V}$-small category for some suitably large Grothendieck universe $\mf{V}$ containing $\mf{U}$.
\end{enumerate}
\end{defn}

\begin{defn}
\label{hodge.2}
Let $K$ and $L$ be two complexes of $\KK$-modules.
\begin{enumerate}
\item
In \eqref{tannakian.1.1}, we defined the tensor product $K \otimes L$.
If $K$ and $L$ are given filtrations $F$ and $G$, respectively, then we define the tensor product $\prns{K, F} \otimes_{\KK} \prns{L, G}$ to be $K \otimes_{\KK} L$ equipped with the filtration given by
\[
\prns{F \otimes_{\KK} G}^k\prns{K \otimes_{\KK} L}^n
:=
\bigoplus_{r\in \mb{Z}} \sum_{s \in \mb{Z}} \prns{F^{s}K^{r} \otimes_{\KK} G^{k-s}L^{n-r}}
\]
for each $\prns{k, n} \in \mb{Z}^2$.
Defining the tensor product of the additional filtrations analogously, we have a reasonable construction of the tensor product of bifiltered complexes.

With this definition, the category of increasingly filtered complexes of $\KK$-modules is a symmetric monoidal category with unit given by $\KK$ as a complex concentrated in degree $0$ equipped with the trivial filtration $F^k\KK :=0$ for $k \in \mb{Z}_{<0}$ and $F^k\KK = \KK$ for $k \in \mb{Z}_{\geq0}$.
This too extends easily to the setting of bifiltered complexes.
\item
The \emph{tensor product} $\prns{K_{\cdot} \otimes K'_{\cdot}, F \otimes F', W \otimes W', \alpha \otimes \alpha', \beta \otimes \beta'}$ of two mixed Hodge $\KK$-complexes $\prns{K_{\cdot}, F, W, \alpha, \beta}$ and $\prns{K'_{\cdot}, F', W', \alpha', \beta'}$
is defined by
\[
\prns{K_1 \otimes_{\KK} K'_1, W \otimes_{\KK} W'} 
\xrightarrow{\alpha \otimes_{\mb{C}} \alpha'}
\prns{K_2 \otimes_{\mb{C}} K'_2, W \otimes_{\mb{C}} W'}
\xleftarrow{\beta \otimes_{\mb{C}} \beta'}
\prns{K_3 \otimes_{\mb{C}} K'_3, F \otimes_{\mb{C}} F', W \otimes_{\mb{C}} W'}.
\]
A filtered variant of the K\"unneth formula, along with the observation that the tensor product of two polarizable pure Hodge $\KK$-structures is another such, show that this is another object of $\MHC^{\mr{p}}_{\KK}$ (\cite[3.20]{Peters-Steenbrink_mixed-hodge}), so this tensor product makes $\MHC^{\mr{p}}_{\KK}$ a symmetric monoidal category $\MHC^{\mr{p},\otimes}_{\KK}$.

The tensor product bifunctor is compatible with the differential graded structures on filtered and bifiltered complexes, so $\MHC^{\mr{p}, \otimes}_{\KK}$ is in fact a symmetric monoidal $\KK$-linear differential graded category, i.e., a commutative monoid in the symmetric monoidal category $\DGCAT_{\KK}^{\otimes}$ of $\mf{V}$-small $\KK$-linear differential graded categories (\cite[2.1$(vi)$]{Drew_verdier-quotients}).
\item
From the pretriangulated $\KK$-linear symmetric monoidal differential graded category $\MHC^{\mr{p}, \otimes}_{\KK}$, we construct a $\mf{V}$-small stable symmetric monoidal qcategory $\altdgnerve{\MHC^{\mr{p}}_{\KK}}\scr{\otimes}{} \in \calg{\QCAT^{\mr{Ex}, \otimes}}$, using \cite[2.5]{Drew_verdier-quotients}, where $\QCAT^{\mr{Ex}, \otimes}$ denotes the qcategory of $\mf{V}$-small stable qcategories  equipped with the symmetric monoidal structure of \cite[5.4.7]{Lurie_DAGVIII}.
The homotopy category $\ho{\altdgnerve{\MHC^{\mr{p}}_{\KK}}}$ is equivalent to $\mr{H}^0\prns{\MHC^{\mr{p}}_{\KK}}$ (\cite[2.1$(viii)$]{Drew_verdier-quotients}).
\end{enumerate}
\end{defn}

\begin{defn}
\label{hodge.3}
To each $K \in \cpx{\mhs^{\mr{p}}_{\KK}}^{\mr{b}}$ we assign a diagram $\prns{K_{\cdot}, F, W}$ as in \eqref{hodge.1.1} satisfying conditions $\textup{(a)}$ and $\textup{(b)}$ of \ref{hodge.1}$(i)$ in the evident way: $K_1$ and $K_2 = K_3$ are the complexes of $\KK$-modules and $\mb{C}$-modules underlying $K$, respectively, and $F$ and $W$ are the Hodge and weight filtrations, respectively.
We must, however, shift the weight filtration in order to obtain a diagram satisfying \ref{hodge.1}$(i)\textup{(c)}$, setting $\tilde{W}_k\prns{K^n} := W_{k+n}\prns{K^n}$ for each $\prns{k,n} \in \mb{Z}^2$.
This assignment $K \mapsto \prns{K_{\cdot}, F, \tilde{W}}$ extends to a $\KK$-linear differential graded functor $\chi:  \cpx{\mhs^{\mr{p}}_{\KK}}^{\mr{b}} \to \MHC^{\mr{p}}_{\KK}$.
\end{defn}

\begin{prop}
\label{hodge.4}
The $\KK$-linear differential graded functor $\chi: \cpx{\mhs^{\mr{p}}_{\KK}}^{\mr{b}} \to \MHC^{\mr{p}}_{\KK}$ is symmetric monoidal with respect to the symmetric monoidal structures of \textup{\ref{tannakian.1}$(iii)$} and \textup{\ref{hodge.2}}$(ii)$ and induces an exact symmetric monoidal functor $\chi^{\otimes}: \mc{K}^{\mr{b}}\prns{\mhs^{\mr{p}}_{\KK}}\scr{\otimes}{} \to \altdgnerve{\MHC^{\mr{p}}_{\KK}}\scr{\otimes}{}$.
\end{prop}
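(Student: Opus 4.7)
The plan is to deduce the proposition in three stages: first upgrade $\chi$ to a symmetric monoidal $\KK$-linear differential graded functor $\chi^{\otimes}\colon \cpx{\mhs^{\mr{p}}_{\KK}}^{\mr{b},\otimes} \to \MHC^{\mr{p},\otimes}_{\KK}$; then apply the symmetric monoidal differential graded nerve of \cite[2.5]{Drew_verdier-quotients}; and finally observe that the resulting qcategorical functor is automatically exact.

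For the first stage, the structure morphisms of $\chi^{\otimes}$ are tautological in degree $1$ and $3$ of the underlying diagram \eqref{hodge.1.1}: the underlying complexes of $\KK$-modules and $\mb{C}$-modules of $K \otimes L$ are the tensor products of those of $K$ and $L$ with the usual Koszul sign rule, so the K\"unneth identifications give the structure. The filtration $F$ on the $\mb{C}$-component is defined by the same formula as in \ref{hodge.2}$(i)$, matching \ref{tannakian.1}$(iii)$, so compatibility with $F$ is automatic. The only genuine check is that the shifted weight filtration $\tilde{W}$ from \ref{hodge.3} is compatible with the tensor product. This is the single computation
\begin{align*}
\bigl(\tilde{W} \otimes \tilde{W}\bigr)_k\bigl(K \otimes L\bigr)^n
&= \sum_{r,s \in \mb{Z}} \tilde{W}_s K^r \otimes \tilde{W}_{k-s} L^{n-r}
= \sum_{r,s \in \mb{Z}} W_{s+r}K^r \otimes W_{(k+n)-(s+r)} L^{n-r}\\
&= \bigl(W \otimes W\bigr)_{k+n}\bigl(K \otimes L\bigr)^n
= \widetilde{\bigl(W \otimes W\bigr)}_k\bigl(K \otimes L\bigr)^n,
\end{align*}
together with the analogous verification that $\tilde{W}$ on the unit $\KK$ (concentrated in degree $0$) agrees with the trivial weight filtration $W_{-1} = 0$, $W_0 = \KK$ specified in \ref{hodge.2}$(i)$. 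The polarizability of the resulting pure Hodge structure in each weight is part of \ref{hodge.2}$(ii)$. Coherence (associativity, symmetry, unitality) is inherited from the component dg-categories of filtered and bifiltered complexes, since the K\"unneth isomorphism respects these structures strictly.

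For the second stage, the assignment $\altdgnerve{-}$ extends to a symmetric monoidal functor $\calg{\DGCAT_{\KK}^{\otimes}} \to \calg{\QCAT^{\mr{Ex},\otimes}}$ by \cite[2.5]{Drew_verdier-quotients}, so $\chi^{\otimes}$ induces $\altdgnerve{\chi}\scr{\otimes}{}\colon \altdgnerve{\cpx{\mhs^{\mr{p}}_{\KK}}^{\mr{b}}}\scr{\otimes}{} \to \altdgnerve{\MHC^{\mr{p}}_{\KK}}\scr{\otimes}{}$, and $\altdgnerve{\cpx{\mhs^{\mr{p}}_{\KK}}^{\mr{b}}}\scr{\otimes}{} = \K{\mhs^{\mr{p}}_{\KK}}^{\mr{b},\otimes}$ by \ref{tannakian.1}$(i)$ and \ref{tannakian.1}$(iii)$. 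Exactness of the underlying functor is automatic from \cite[2.1]{Drew_verdier-quotients}: a $\KK$-linear dg-functor between pretriangulated $\KK$-linear dg-categories sends cones and shifts to cones and shifts, so the induced functor on dg-nerves preserves zero objects and fiber sequences, hence is exact between stable qcategories.

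The only substantive obstacle is the weight-shift compatibility above; all other ingredients are formal consequences of \ref{hodge.1}, \ref{hodge.2}, \ref{hodge.3} together with the monoidality of the dg-nerve functor from \cite{Drew_verdier-quotients}. I would organize the writeup so that the filtration calculation is displayed explicitly and the remaining coherence and exactness statements are handled by invoking those references.
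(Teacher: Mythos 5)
Your proposal is correct and takes essentially the same route as the paper: the one substantive verification is that the shifted weight filtration $\tilde{W}$ is compatible with tensor products, and your displayed reindexing is precisely the computation in the paper's proof, with the Hodge filtration immediate, the coherence data treated as routine, and the passage to an exact symmetric monoidal functor of qcategories obtained from \cite[2.5]{Drew_verdier-quotients}.
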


\begin{proof}
Once we have shown that $\chi$ underlies a $\KK$-linear symmetric monoidal differential graded functor, the last assertion will follow from \cite[2.5]{Drew_verdier-quotients}.
Let $K$ and $L$ be objects of $\cpx{\mhs^{\mr{p}}_{\KK}}^{\mr{b}}$.
By definition, then complex of $\KK$-modules underlying $\chi \prns{K \otimes L}$ is the complex of $\KK$-modules underlying $K \otimes L$.
To see that $\chi$ is symmetric monoidal, it therefore suffices to check that the Hodge and weight filtrations on $\chi\prns{K \otimes L}$ are equal to the tensor products of the Hodge and weight filtrations, respectively, on $\chi\prns{K}$ and $\chi\prns{L}$.
Fix $\prns{k, n} \in \mb{Z}^2$.
We have the following computations:
\begin{align*}
\tilde{W}_k\prns{\chi\prns{K \otimes_{\KK} L}}^n
=
W_{k+n} \prns[2]{\bigoplus_{r \in \mb{Z}} \prns{K^r \otimes_{\KK} L^{n-r}}}
&=
\bigoplus_{r \in \mb{Z}} \sum_{s \in \mb{Z}} \prns{W_sK^r \otimes_{\KK} W_{k+n -s}L^{n-r}},
\\
\prns{\tilde{W} \otimes_{\KK} \tilde{W}}^k \prns{\chi\prns{K} \otimes_{\KK} \chi \prns{L}}^n
&=
\bigoplus_{r \in \mb{Z}} \sum_{t \in \mb{Z}} \prns{W_{r+t} K^r \otimes_{\mb{K}} W_{k+n-\prns{r+t}} L^{n-r}}.
\end{align*}
Reindexing the last expression by $t := s-r$, we find that the two filtrations are equal.
Essentially the same argument applies for the Hodge filtrations.
Sparing the reader the predictably tedious verification that $\chi^{\otimes}$ satisfies the coherence properties required of a symmetric monoidal differential graded functor, the claim follows.
\end{proof}

\begin{defn}
\label{hodge.5}
Let $\prns{K_{\cdot}, F, W} \in \MHC^{\mr{p}}_{\KK}$.
We say that $\prns{K_{\cdot}, F, W}$ is \emph{acyclic} if the underlying complex of $\KK$-modules $K_1$ is acyclic.
Let $\iota: \on{\mc{A}c} \hookrightarrow \altdgnerve{\MHC^{\mr{p}}_{\KK}}$ denote the full subqcategory spanned by the acyclic objects.
Since the forgetful differential graded functor $\prns{K_{\cdot}, F, W} \mapsto K_1: \MHC^{\mr{p}}_{\KK} \to \cpx{\mod{\ab}_{\KK}}$ preserves cones and translations, $\on{\mc{A}c}$ is a stable subqcategory and $\iota$ is an exact functor.
We define $\mhc^{\mr{p}}_{\KK}:= \altdgnerve{\MHC^{\mr{p}}_{\KK}}/\on{\mc{A}c}$ to be the cofiber of $\iota$ in $\QCAT^{\mr{Ex}}$, i.e., the Verdier quotient of $\altdgnerve{\MHC^{\mr{p}}_{\KK}}$ by $\on{\mc{A}c}$.
\end{defn}

\begin{thm}
[{\cite[3.11]{Beilinson_absolute-hodge}}]
\label{hodge.6}
The exact functor $\chi: \K{\mhs^{\mr{p}}_{\KK}}^{\mr{b}} \to \altdgnerve{\MHC^{\mr{p}}_{\KK}}$ of \textup{\ref{hodge.4}} induces an equivalence $\overline{\chi}: \D{\mhs^{\mr{p}}_{\KK}}^{\mr{b}} \isom \mhc^{\mr{p}}_{\KK}$.
\end{thm}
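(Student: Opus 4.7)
The plan is to construct $\overline{\chi}$ via the universal property of the qcategorical Verdier quotient and then reduce the assertion that $\overline{\chi}$ is an equivalence to Beilinson's classical theorem \cite[3.11]{Beilinson_absolute-hodge} by passing to homotopy categories.

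First, I would verify that $\chi$ carries $\on{\mc{A}c}\prns{\mhs^{\mr{p}}_{\KK}}$ into the full subqcategory $\on{\mc{A}c} \subseteq \altdgnerve{\MHC^{\mr{p}}_{\KK}}$ of \ref{hodge.5}. By the construction in \ref{hodge.3}, the underlying complex of $\KK$-modules of $\chi\prns{K}$ coincides with that of $K$; and since the forgetful functor $\mhs^{\mr{p}}_{\KK} \to \mod{\ab}_{\KK}$ is $\KK$-linear, exact and conservative, a complex in $\cpx{\mhs^{\mr{p}}_{\KK}}^{\mr{b}}$ is acyclic if and only if its underlying complex of $\KK$-modules is. The universal property of the Verdier quotient in $\QCAT^{\mr{Ex}}$ (\cite[\S1.5]{Drew_verdier-quotients}) then produces the desired exact functor $\overline{\chi}: \D{\mhs^{\mr{p}}_{\KK}}^{\mr{b}} \to \mhc^{\mr{p}}_{\KK}$.

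Next, I would reduce the equivalence assertion to the corresponding statement on triangulated homotopy categories. An exact functor $F$ between stable qcategories is an equivalence as soon as $\ho{F}$ is an equivalence of triangulated categories: essential surjectivity descends, while full faithfulness at the level of mapping spaces is equivalent to bijectivity on all $\pi_n \map{X}{Y}_{\mc{C}} \cong \mr{Hom}_{\ho{\mc{C}}}\prns{X, Y\brk{-n}}$ for $n \geq 0$, and a triangulated equivalence supplies such bijections by virtue of respecting the shift. It therefore suffices to identify $\ho{\overline{\chi}}$ with Beilinson's equivalence. Here $\ho{\D{\mhs^{\mr{p}}_{\KK}}^{\mr{b}}} \simeq \mr{D}^{\mr{b}}\prns{\mhs^{\mr{p}}_{\KK}}$ is essentially by construction (\ref{tannakian.1}(\textit{i})); and by \ref{hodge.2}(\textit{iii}) combined with the compatibility of qcategorical and triangulated Verdier quotients on homotopy categories from \cite{Drew_verdier-quotients}, $\ho{\mhc^{\mr{p}}_{\KK}}$ is identified with the triangulated Verdier quotient of $\mr{H}^0\prns{\MHC^{\mr{p}}_{\KK}}$ by the thick subcategory of acyclic objects, which is precisely Beilinson's $\mr{D}^{\mr{b}}_{\mc{H}^{\mr{p}},\KK}$. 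Under these identifications $\ho{\overline{\chi}}$ is exactly the functor of \cite[3.11]{Beilinson_absolute-hodge}, hence an equivalence.

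The deep ingredient is of course Beilinson's theorem itself; everything else is formal. Within the scope of this paper the main point that requires care is the identification of $\ho{\mhc^{\mr{p}}_{\KK}}$ with the corresponding triangulated Verdier quotient, a compatibility supplied by \cite{Drew_verdier-quotients}; with that in hand, the passage from a triangulated equivalence back to a qcategorical equivalence is routine.
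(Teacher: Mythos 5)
Your proposal is correct and follows essentially the same route as the paper: show $\chi$ sends acyclic complexes of mixed Hodge structures to acyclic mixed Hodge complexes (acyclicity being detected on underlying $\KK$-modules), invoke the universal property of the Verdier quotient to obtain $\overline{\chi}$, identify $\ho{\overline{\chi}}$ with Beilinson's functor, and use that an exact functor between stable qcategories is an equivalence if and only if it is one on homotopy categories. Your version merely spells out the last reduction (via $\pi_n$ of mapping spaces) and the identification of $\ho{\mhc^{\mr{p}}_{\KK}}$ with the triangulated Verdier quotient in more detail than the paper does.
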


\begin{proof}
A complex of polarizable mixed Hodge $\KK$-structures is acyclic if and only if the complex of underlying $\KK$-modules is acyclic, so $\chi$ induces an exact functor $\overline{\chi}: \D{\mhs^{\mr{p}}_{\KK}}^{\mr{b}} \to \mhc^{\mr{p}}_{\KK}$ by the universal property of the cofiber defining $\D{\mhs^{\mr{p}}_{\KK}}$ (\ref{tannakian.1}$(i)$).
At the level of homotopy categories, $\overline{\chi}$ induces the functor of \cite[3.11]{Beilinson_absolute-hodge}, so $\ho{\overline{\chi}}$ is an equivalence.
This proves the first assertion: the exact functor $\overline{\chi}$, whose domain and codomain are stable qcategories, is an equivalence if and only if $\ho{\overline{\chi}}$ is an equivalence.
\end{proof}

\begin{thm}
\label{hodge.7}
The canonical functor $\pi: \altdgnerve{\MHC^{\mr{p}}_{\KK}} \to \mhc^{\mr{p}}_{\KK}$ underlies a symmetric monoidal Verdier quotient $\pi^{\otimes}: \altdgnerve{\MHC^{\mr{p}}_{\KK}}\scr{\otimes}{} \to \mhc^{\mr{p}, \otimes}_{\KK}$ of $\altdgnerve{\MHC^{\mr{p}}_{\KK}}\scr{\otimes}{}$ by $\on{\mc{A}c}$ \textup{\ref{tannakian.1}$(iv)$} and the equivalence $\overline{\chi}$ of \textup{\ref{hodge.6}} underlies a symmetric monoidal equivalence $\overline{\chi}^{\otimes}: \D{\mhs^{\mr{p}}_{\KK}}^{\mr{b}}\scr{\otimes}{} \to \mhc^{\mr{p}, \otimes}_{\KK}$.
\end{thm}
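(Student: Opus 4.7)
The plan is to realize $\pi^\otimes$ as a symmetric monoidal Verdier quotient in the sense of \cite[1.5]{Drew_verdier-quotients} and then obtain $\overline\chi^\otimes$ by the relevant universal property, exactly as in \ref{tannakian.1}$(iv)$. The key preliminary observation is that $\on{\mc{A}c} \subseteq \altdgnerve{\MHC^{\mr{p}}_{\KK}}$ is a $\otimes$-ideal: if $(K_{\cdot}, F, W) \in \on{\mc{A}c}$ and $(L_{\cdot}, F, W) \in \MHC^{\mr{p}}_{\KK}$, then, by definition of the tensor product in \ref{hodge.2}$(ii)$, the underlying complex of $\KK$-modules of $(K_{\cdot}\otimes L_{\cdot}, F\otimes F, W\otimes W)$ is $K_1 \otimes_{\KK} L_1$. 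As $\KK$ is a field, the functor $(-)\otimes_{\KK} L_1$ is exact and so annihilates acyclic complexes; thus $K\otimes L \in \on{\mc{A}c}$. It follows that $-\otimes L: \altdgnerve{\MHC^{\mr{p}}_{\KK}} \to \altdgnerve{\MHC^{\mr{p}}_{\KK}}$ sends $\on{\mc{A}c}$ into itself for each $L$, which is exactly the $\otimes$-ideal condition needed to invoke \cite[3.2]{Drew_verdier-quotients}.

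Once this is established, \cite[3.2]{Drew_verdier-quotients} promotes $\pi$ to a symmetric monoidal Verdier quotient $\pi^\otimes: \altdgnerve{\MHC^{\mr{p}}_{\KK}}\scr{\otimes}{} \to \mhc^{\mr{p}, \otimes}_{\KK}$ with the universal property that, for each stable symmetric monoidal qcategory $\mc{C}^\otimes$, composition with $\pi^\otimes$ induces a fully faithful functor $\fun{\mhc^{\mr{p}, \otimes}_{\KK}}{\mc{C}^\otimes}^\otimes \hookrightarrow \fun{\altdgnerve{\MHC^{\mr{p}}_{\KK}}\scr{\otimes}{}}{\mc{C}^\otimes}^\otimes$ whose image consists of symmetric monoidal functors killing $\on{\mc{A}c}$.

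Next, consider the composite $\pi^\otimes \circ \chi^\otimes: \K{\mhs^{\mr{p}}_{\KK}}^{\mr{b}}\scr{\otimes}{} \to \mhc^{\mr{p}, \otimes}_{\KK}$, which exists by \ref{hodge.4}. The complex of $\KK$-modules underlying $\chi(K)$ is the complex of $\KK$-modules underlying $K$, so $\chi$ sends acyclic objects of $\K{\mhs^{\mr{p}}_{\KK}}^{\mr{b}}$ into $\on{\mc{A}c}$, and hence $\pi^\otimes \chi^\otimes$ sends acyclic complexes to zero objects. By the symmetric monoidal Verdier quotient property for $q^\otimes: \K{\mhs^{\mr{p}}_{\KK}}^{\mr{b}}\scr{\otimes}{} \to \D{\mhs^{\mr{p}}_{\KK}}^{\mr{b}}\scr{\otimes}{}$ from \ref{tannakian.1}$(iv)$, there is an essentially unique symmetric monoidal functor $\overline\chi^\otimes: \D{\mhs^{\mr{p}}_{\KK}}^{\mr{b}}\scr{\otimes}{} \to \mhc^{\mr{p}, \otimes}_{\KK}$ with $\overline\chi^\otimes q^\otimes \simeq \pi^\otimes \chi^\otimes$. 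By uniqueness, its underlying functor is forced to be $\overline\chi$ of \ref{hodge.6}.

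Finally, since $\overline\chi$ is an equivalence by \ref{hodge.6} and a symmetric monoidal functor between symmetric monoidal qcategories is an equivalence if and only if its underlying functor is, we conclude that $\overline\chi^\otimes$ is a symmetric monoidal equivalence. The main obstacle is conceptual rather than computational: one must verify that the ambient symmetric monoidal Verdier quotient machinery of \cite{Drew_verdier-quotients} applies, which reduces to the $\otimes$-ideal check above; everything after that is a straightforward application of universal properties.
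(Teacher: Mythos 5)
Your proposal is correct and follows essentially the same route as the paper: establish that $\on{\mc{A}c}$ is a $\otimes$-ideal (the paper deduces this from the symmetric monoidal, acyclicity-reflecting forgetful functor to $\cpx{\mod{\ab}_{\KK}}$ via \cite[3.4]{Drew_verdier-quotients}, whereas you argue it directly using exactness of $\otimes_{\KK}$ over the field $\KK$, which is the same content), invoke the symmetric monoidal Verdier quotient machinery, factor $\pi^{\otimes}\chi^{\otimes}$ through $q^{\otimes}$ by the universal property, and conclude via \ref{hodge.6} and the fact that a symmetric monoidal functor with underlying equivalence is a symmetric monoidal equivalence (\cite[2.1.3.8]{Lurie_higher-algebra}). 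The only cosmetic discrepancy is the citation: the existence of the symmetric monoidal Verdier quotient is \cite[2.6]{Drew_verdier-quotients} rather than \cite[3.2]{Drew_verdier-quotients}.
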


\begin{proof}
The functor $\prns{K_{\cdot}, F, W} \mapsto K_1: \MHC^{\mr{p}, \otimes}_{\KK} \to \cpx{\mod{\ab}_{\KK}}$ is symmetric monoidal and reflects acyclicity, so it follows from \cite[3.4]{Drew_verdier-quotients} applied to $\mb{A}^{\otimes} = \mod{\ab}_{\KK}\scr{\otimes}{}$ that the tensor product in $\MHC^{\mr{p}}_{\KK}$ preserves acyclic objects separately in each variable.
The first assertion follows by \cite[2.6]{Drew_verdier-quotients}.

By the universal property of the symmetric monoidal Verdier quotient $\pi^{\otimes}$, the composite $\pi^{\otimes}\chi^{\otimes}$ factors through the symmetric monoidal Verdier quotient $\K{\mhs^{\mr{p}}_{\KK}}^{\mr{b}}\scr{\otimes}{} \to \D{\mhs^{\mr{p}}_{\KK}}^{\mr{b}}\scr{\otimes}{}$ of \ref{tannakian.1}$(iv)$.
The exact functor underlying the resulting symmetric monoidal functor $\overline{\chi}^{\otimes}: \D{\mhs^{\mr{p}}_{\KK}}^{\mr{b}}\scr{\otimes}{} \to \mhc^{\mr{p}, \otimes}_{\KK}$ must be equivalent to $\chi$ by the universal property of the cofiber $\D{\mhs^{\mr{p}}_{\KK}} := \K{\mhs^{\mr{p}}_{\KK}}^{\mr{b}}/\on{\mc{A}c}\prns{\mhs^{\mr{p}}_{\KK}}$, where $\on{\mc{A}c}\prns{\mhs^{\mr{p}}_{\KK}} \hookrightarrow \K{\mhs^{\mr{p}}_{\KK}}^{\mr{b}}$ denotes the full subqcategory spanned by acyclic complexes.
By \cite[2.1.3.8]{Lurie_higher-algebra}, the fact (\ref{hodge.6}) that $\overline{\chi}$ is an equivalence implies that the symmetric monoidal functor $\overline{\chi}^{\otimes}$ is an equivalence.
\end{proof}

\section{Rectification}

\setcounter{thm}{-1}

\begin{notation}
\label{rect.0}
Throughout this section, we fix the following notation:
\begin{enumerate}
\item
$\KK \hookrightarrow \mb{R}$, a subfield of the real numbers; and
\item
$\kk \hookrightarrow \mb{C}$, a subfield of the complex numbers.
\end{enumerate}
\end{notation}

\begin{motivation}
We arrive now at our intended applications.
Having in the previous two sections constructed the requisite equivalence between the symmetric monoidal qcategories of complexes of mixed Hodge structures and mixed Hodge complexes and the necessary ingredients for the rectification of presheaves of commutative algebras in the symmetric monoidal derived category of mixed Hodge structures, we now perform the desired rectifications.
Specifically, we show that the functor assigning to each $X \in \sch[ft]{\kk}$ the graded polarizable mixed Hodge structure $\mr{H}^{\bullet}_{\mr{Betti}}\prns{X, \KK}$ of \cite[8.2.1]{Deligne_hodgeIII}, equipped with the ring structure given by the cup product, can be obtained by taking the cohomology of a presheaf 
\[
\tilde{\Gamma}_{\mr{Hdg}}:
\prns{\sch[ft]{\kk}}\op
\to
\calg{\cpx{\ind{\mhs^{\mr{p}}_{\KK}}}\scr{\otimes}{}}
\]
of commutative algebras in the symmetric monoidal category $\cpx{\ind{\mhs^{\mr{p}}_{\KK}}}\scr{\otimes}{}$.
\end{motivation}

\begin{summary}
We begin by constructing the presheaf $\tilde{\Gamma}_{\mr{Hdg}}$ on the category of separated $\kk$-schemes of finite type (\ref{rect.2}).
In order to extend $\tilde{\Gamma}_{\mr{Hdg}}$ to singular $\kk$-schemes, we appeal to a result of V.~Voevodsky that requires some terminology from $\mb{A}^1$-homotopy theory, which we recall in \ref{rect.4}.
Proposition \ref{rect.5} is a general result providing sufficient conditions for a presheaf on $\sm[sft]{\kk}$ to extend naturally to a functor on $\sch[ft]{\kk}$.
The desired presheaf on $\sch[ft]{\kk}$ is then constructed in \ref{rect.6}.
\end{summary}

\begin{defn}
\label{rect.1}
We denote by $\on{\mc{C}pt}$ the category of \emph{smooth compactifications}, whose objects are the dense open immersions $j: X \hookrightarrow \overline{X}$ in $\sm[sft]{\mb{C}}$ such that $\overline{X}$ is smooth and proper over $\spec{\mb{C}}$ and $\overline{X} - X$ is a normal crossings divisor, and whose morphisms are commutative squares in $\sm[sft]{\mb{C}}$.
We abusively denote objects of $\on{\mc{C}pt}$ by ordered pairs $\prns{X, \overline{X}}$, suppressing the morphism $j$.
\end{defn}

\begin{thm}
\label{rect.2}
There exists a functor $\tilde{\Gamma}_{\on{Hdg}}: \prns{\sm[sft]{\kk}}\op \to \calg{\cpx{\ind{\mhs^{\mr{p}}_{\KK}}}\scr{\otimes}{}}$ such that, for each $X \in \sm[sft]{\kk}$ and each $r \in \mb{Z}$, $\h^r\tilde{\Gamma}_{\on{Hdg}}\prns{X}$ is naturally isomorphic to the $\KK$-linear Betti cohomology $\on{H}^r_{\on{Betti}}\prns{X\otimes_{\kk}\mb{C}, \KK}$ equipped with the mixed Hodge $\KK$-structure of \textup{\cite[3.2.5]{Deligne_hodgeII}}.
\end{thm}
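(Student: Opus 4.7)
The plan is to build $\tilde{\Gamma}_{\on{Hdg}}$ in four stages: construct a strict commutative algebra in $\MHC^{\mr{p}, \otimes}_{\KK}$ for each smooth compactification; transport it through the symmetric monoidal equivalence of \ref{hodge.7} and the inclusion $\D{\mhs^{\mr{p}}_{\KK}}^{\mr{b}} \hookrightarrow \D{\ind{\mhs^{\mr{p}}_{\KK}}}$ of \ref{tannakian.5}(ii); descend from $\on{\mc{C}pt}$ to $\sm[sft]{\kk}$ by a localization argument; and finally rectify via \ref{tannakian.10}.

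First, for each $(X, \overline{X}) \in \on{\mc{C}pt}$ with $D := \overline{X} - X$, I would follow Navarro-Aznar's Thom--Whitney construction (\cite{Navarro-Aznar_hodge-deligne}) to assemble $A_{(X, \overline{X})} \in \calg{\MHC^{\mr{p}, \otimes}_{\KK}}$: the bifiltered $\mb{C}$-complex $K_3$ is the logarithmic de Rham complex $\Omega^{\bullet}_{\overline{X}}(\log D)$ with the stupid Hodge filtration and the pole-order weight filtration, reindexed as in \ref{hodge.3}; the filtered $\KK$-complex $K_1$ is a Thom--Whitney resolution of $j_*\underline{\KK}$ on $\overline{X}(\mb{C})^{\mr{an}}$; and the filtered $\mb{C}$-complex $K_2$ is a Thom--Whitney bridge between them. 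The crucial point is that Thom--Whitney resolutions carry strictly graded-commutative multiplications compatible with all filtrations, so that $A_{(X, \overline{X})}$ really lies in $\calg{\MHC^{\mr{p}, \otimes}_{\KK}}$ and not merely among $\mb{E}_\infty$-algebras, and the assignment is $1$-contravariantly functorial on $\on{\mc{C}pt}$.

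Next, postcomposing with the zigzag
\[
\calg{\MHC^{\mr{p}, \otimes}_{\KK}} \to \calg{\altdgnerve{\MHC^{\mr{p}}_{\KK}}\scr{\otimes}{}} \xrightarrow{\pi^{\otimes}} \calg{\mhc^{\mr{p}, \otimes}_{\KK}} \xleftarrow[\sim]{\overline{\chi}^{\otimes}} \calg{\D{\mhs^{\mr{p}}_{\KK}}^{\mr{b}}\scr{\otimes}{}} \hookrightarrow \calg{\D{\ind{\mhs^{\mr{p}}_{\KK}}}\scr{\otimes}{}}
\]
provided by \ref{hodge.7} and \ref{tannakian.5}(ii) yields a functor $\on{\mc{C}pt}\op \to \calg{\D{\ind{\mhs^{\mr{p}}_{\KK}}}\scr{\otimes}{}}$. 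To descend along the projection $p: \on{\mc{C}pt} \to \sm[sft]{\mb{C}}$, $(X, \overline{X}) \mapsto X$, I would observe that the fibers are cofiltered\textemdash by Nagata and Hironaka, any two compactifications of $X$ are dominated by a third obtained by taking the closure of the image of $X$ in the product and resolving singularities\textemdash and that every vertical morphism induces a filtered quasi-isomorphism on $K_1$, hence an equivalence in $\mhc^{\mr{p}}_{\KK}$. The left Kan extension along $p\op$ therefore produces a functor $\prns{\sm[sft]{\mb{C}}}\op \to \calg{\D{\ind{\mhs^{\mr{p}}_{\KK}}}\scr{\otimes}{}}$ whose value on $X$ is canonically the common value on any compactification; precomposing with $X \mapsto X \otimes_{\kk} \mb{C}$ gives a presheaf on $\prns{\sm[sft]{\kk}}\op$.

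Finally, applying \ref{tannakian.10} together with \cite[1.3.4.25]{Lurie_higher-algebra}, applied to the $\mf{U}$-combinatorial model structure of \ref{tannakian.8}, rectifies this presheaf to a strict $1$-functor $\tilde{\Gamma}_{\on{Hdg}}$ into $\calg{\cpx{\ind{\mhs^{\mr{p}}_{\KK}}}\scr{\otimes}{}}$. The identification of $\h^r \tilde{\Gamma}_{\on{Hdg}}\prns{X}$ with Deligne's mixed Hodge structure is built into the construction at the level of mixed Hodge complexes and is preserved by every subsequent functor, all of which are $\trun$-exact for the natural $\trun$-structures. The main obstacle is Step 1: producing a strictly commutative model of the rational component $K_1$ that is $1$-functorial in smooth compactifications and compatible with the logarithmic de Rham piece. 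The Thom--Whitney apparatus is designed for exactly this purpose, so the difficulty is technical rather than conceptual; the subsequent descent and rectification steps are comparatively routine given the machinery of \S1 and \S2.
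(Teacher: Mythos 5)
Your proposal assembles the same ingredients as the paper but organizes the descent step differently. Your Step 1 is, in effect, the result the paper simply cites: \cite[8.15]{Navarro-Aznar_hodge-deligne} already provides the functor $\on{\mc{C}pt}\op \to \calg{\mr{Z}^0\prns{\MHC^{\mr{p},\otimes}_{\KK}}}$ via the Thom--Whitney construction, so the piece you flag as the main obstacle is exactly the quoted input, not something to be reproved. Your Step 2 (the zigzag through \ref{hodge.7}, the inclusion $\D{\mhs^{\mr{p}}_{\KK}}^{\mr{b}} \hookrightarrow \D{\ind{\mhs^{\mr{p}}_{\KK}}}$, and \ref{tannakian.10}) matches the paper. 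The genuine divergence is the passage from $\on{\mc{C}pt}$ to $\sm[sft]{\mb{C}}$: the paper first rectifies over $\on{\mc{C}pt}\op$ using \ref{tannakian.8} and \cite[1.3.4.25]{Lurie_higher-algebra}, and then defines $\tilde{\Gamma}_{\mr{Hdg}}\prns{X}$ as an honest $1$-categorical $\aleph_0$-filtered colimit of strict commutative algebras over $\on{\mc{C}pt}_X$; this makes functoriality in $X$ elementary and identifies cohomology because the transition maps are weak equivalences (the underlying maps are identities on singular cochains) and filtered colimits are exact in $\ind{\mhs^{\mr{p}}_{\KK}}$. You instead descend first, by a quasi-categorical left Kan extension along $p\op: \on{\mc{C}pt}\op \to \prns{\sm[sft]{\mb{C}}}\op$ with values in $\calg{\D{\ind{\mhs^{\mr{p}}_{\KK}}}\scr{\otimes}{}}$, and rectify only at the end over $\prns{\sm[sft]{\kk}}\op$; this is viable (it is essentially what the paper does later in \ref{rect.6}$(ii)$ over $\sch[ft]{\kk}$), but your assertion that the Kan extension's value at $X$ is ``canonically the common value on any compactification'' is not automatic: the value is a colimit over the comma category $\prns{p\op \downarrow X}$, whose objects are pairs $\prns{\prns{Y,\overline{Y}},\, a: X \to Y}$ with arbitrary $Y$, and the maps in that diagram are not all equivalences. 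You need a cofinality argument showing that the subcategory of compactifications of $X$ itself is final, i.e., that for each such pair the category of compactifications $\overline{X}$ of $X$ admitting an extension $\overline{X} \to \overline{Y}$ of $a$ is nonempty and filtered (graph closure plus Nagata and Hironaka, as in \cite[3.2.11]{Deligne_hodgeII}); only then does weak contractibility of the index together with the fact that vertical morphisms become equivalences give the claimed value. This is a routine but real step that your plan leaves implicit; the paper's order of operations (rectify, then take a strict filtered colimit) is precisely what lets it avoid any $\infty$-categorical cofinality bookkeeping.
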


\begin{proof}
It suffices to treat the case in which $\kk = \mb{C}$ and then compose with the functor $\prns{\sm[sft]{\kk}}\op \to \prns{\sm[sft]{\mb{C}}}\op$.
Making the obvious modification from the $\mb{Q}$-linear to the $\KK$-linear setting and forgetting the $\mb{Z}$-linear component, \cite[8.15]{Navarro-Aznar_hodge-deligne} provides us with a functor $\overline{\Gamma}_0: \on{\mc{C}pt}\op \to \calg{\mr{Z}^0\prns{\MHC^{\mr{p}, \otimes}_{\KK}}}$.
By construction, for each $r \in \mb{Z}$, the image of an object $\prns{X, \overline{X}}$ under the composite
\begin{equation}
\label{rect.2.1}
\on{\mc{C}pt}\op
\xrightarrow{\overline{\Gamma}_0}
\calg{\mr{Z}^0\prns{\MHC^{\mr{p}, \otimes}_{\KK}}}
\to
\mr{Z}^0\prns{\MHC^{\mr{p}}_{\KK}}
\to
\mhc^{\mr{p}}_{\KK}
\xrightarrow{\chi^{-1}}
\D{\mhs^{\mr{p}}_{\KK}}^{\mr{b}}
\xrightarrow{\h^r}
\mhs^{\mr{p}}_{\KK}
\end{equation}
is naturally isomorphic to Deligne's mixed Hodge structure on $\on{H}^r_{\on{Betti}}\prns{X, \KK}$ as constructed in \cite[3.2.5$(iii)$]{Deligne_hodgeII}.

Let $\mf{W}$ be the class of weak equivalences of the model structure of \ref{tannakian.8} on the category $\calg{\cpx{\ind{\mhs^{\mr{p}}_{\KK}}}\scr{\otimes}{}}$.
We construct $\overline{\Gamma}: \on{\mc{C}pt}\op \to \calg{\cpx{\ind{\mhs^{\mr{p}}_{\KK}}}\scr{\otimes}{}}\brk{\mf{W}^{-1}}$ as the following composite:
\[
\xymatrix{
\on{\mc{C}pt}\op
\ar[dd]_-{\overline{\Gamma}}
\ar[r]_-{\overline{\Gamma}_0}
&
\calg{\mr{Z}^0\prns{\MHC^{\mr{p}, \otimes}_{\KK}}}
\ar[r]_-{q}
&
\calg{\altdgnerve{\MHC^{\mr{p}}_{\KK}}\scr{\otimes}{}}
\ar[d]^-{\pi}
\\
&
&
\calg{\mhc^{\mr{p}, \otimes}_{\KK}}
\ar[d]^-{\overline{\chi}^{-1}}_-{\sim}
\\
\calg{\cpx{\ind{\mhs^{\mr{p}}_{\KK}}}\scr{\otimes}{}}\brk{\mf{W}^{-1}}
&
\calg{\D{\ind{\mhs^{\mr{p}}_{\KK}}}\scr{\otimes}{}}
\ar[l]_-{\phi^{-1}}^-{\sim}
&
\calg{\D{\mhs^{\mr{p}}_{\KK}}^{\mr{b}}\scr{\otimes}{}}.
\ar@{_{(}->}[l]_-{\iota}
}
\]
Here, $q$ is the functor given by \cite[2.7]{Drew_verdier-quotients}, $\pi$ and $\overline{\chi}^{-1}$ are those of \ref{hodge.7}, $\iota$ is that of \cite[4.7]{Drew_verdier-quotients} and $\phi^{-1}$ is that of \ref{tannakian.10}. 
Note that $\iota$ and $\phi^{-1}$ do not affect cohomology objects, so $\overline{\Gamma}$ also recovers Deligne's mixed Hodge structures by \eqref{rect.2.1}.
As 
\[
\overline{\Gamma} \in \fun{\on{\mc{C}pt}\op}{\calg{\cpx{\ind{\mhs^{\mr{p}}_{\KK}}}\scr{\otimes}{}} \brk{\mf{W}^{-1}}}
\]
and its codomain is the qcategory underlying a $\mf{U}$-combinatorial model category (\ref{tannakian.8}), \cite[1.3.4.25]{Lurie_higher-algebra} implies that $\overline{\Gamma}$ can be rectified to $\overline{\Gamma}: \on{\mc{C}pt}\op \to \calg{\cpx{\ind{\mb{\mhs^{\mr{p}}_{\KK}}}}\scr{\otimes}{}}$.

We now have a functor $\overline{\Gamma}$ between $1$-categories.
Let $\on{\mc{C}pt}_X \subseteq \on{\mc{C}pt}$ denote the subcategory of smooth compactifications of a fixed object $X \in \sm[sft]{\mb{C}}$, i.e., the subcategory spanned by the morphisms $\prns{f, \overline{f}}$ such that $f = \id_X$.
Then $\on{\mc{C}pt}_X$ is nonempty by theorems of M.~Nagata (\cite[4.1]{Conrad_deligne's-notes}) and H.~Hironaka (\cite{Hironaka_resolution-of-singularities}) and $\aleph_0$-filtered by a standard argument (\cite[3.2.11]{Deligne_hodgeII}).
Also, if $\prns{f, \overline{f}}: \prns{X, \overline{X}} \to \prns{X, \overline{X}'}$ is a morphism of $\on{\mc{C}pt}$, then $\overline{\Gamma}\prns{f, \overline{f}} \in \mf{W}$.
Indeed, the morphism of complexes of $\KK$-modules underlying $\overline{\Gamma}_0\prns{f, \overline{f}}$ is the identity on the singular cochain complex of $X$.
We may therefore construct the desired functor $\tilde{\Gamma}_{\mr{Hdg}}: \prns{\sm[sft]{\mb{C}}}\op \to \calg{\cpx{\ind{\mhs^{\mr{p}}_{\KK}}}\scr{\otimes}{}}$ by defining
\[
\tilde{\Gamma}_{\mr{Hdg}}\prns{X}
:=
\colim_{\overline{X} \in \on{\mc{C}pt}_X} \overline{\Gamma}\prns{X, \overline{X}}
\]
and having $\tilde{\Gamma}_{\mr{Hdg}}$ act in the evident way on morphisms.
\end{proof}

\begin{cor}
\label{rect.3}
The functor $\tilde{\Gamma}_{\mr{Hdg}}$ of \textup{\ref{rect.2}} induces a functor $\Gamma_{\mr{Hdg}}: \prns{\sm[sft]{\kk}}\op \to \D{\ind{\mhs^{\mr{p}}_{\KK}}}$ underlying a symmetric monoidal functor $\Gamma^{\otimes}_{\mr{Hdg}}: \prns{\sm[sft]{\kk}}\scr{\mr{op}, \amalg}{} \to \D{\ind{\mhs^{\mr{p}}_{\KK}}}\scr{\otimes}{}$.
\end{cor}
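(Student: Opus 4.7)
The plan is to extract $\Gamma^{\otimes}_{\mr{Hdg}}$ from $\tilde{\Gamma}_{\mr{Hdg}}$ in two steps: first descend to the derived qcategory via the rectification of \ref{tannakian.10}, and then invoke the universal property of the coCartesian symmetric monoidal structure to upgrade the resulting functor to a symmetric monoidal one.

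For the first step, let $\mf{W}$ denote the class of weak equivalences of \ref{tannakian.8} and let
\[
\lambda: \calg{\cpx{\ind{\mhs^{\mr{p}}_{\KK}}}\scr{\otimes}{}} \to \calg{\cpx{\ind{\mhs^{\mr{p}}_{\KK}}}\scr{\otimes}{}}\brk{\mf{W}^{-1}}
\]
be the associated localization. Composing $\tilde{\Gamma}_{\mr{Hdg}}$ with $\lambda$ and then with the equivalence $\phi$ of \ref{tannakian.10}, we obtain a functor
\[
\hat{\Gamma}_{\mr{Hdg}}: \prns{\sm[sft]{\kk}}\op \to \calg{\D{\ind{\mhs^{\mr{p}}_{\KK}}}\scr{\otimes}{}}.
\]

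For the second step, observe that $\sm[sft]{\kk}$ admits finite products\textemdash with terminal object $\spec{\kk}$ and binary products given by fibre products over $\spec{\kk}$\textemdash so $\prns{\sm[sft]{\kk}}\op$ admits finite coproducts and hence the coCartesian symmetric monoidal structure $\prns{\sm[sft]{\kk}}\scr{\mr{op}, \amalg}{}$ is defined. The universal property of this structure (see \cite[\S2.4.3]{Lurie_higher-algebra}) yields a canonical equivalence
\[
\fun{\prns{\sm[sft]{\kk}}\scr{\mr{op}, \amalg}{}}{\D{\ind{\mhs^{\mr{p}}_{\KK}}}\scr{\otimes}{}}^{\otimes} \simeq \fun{\prns{\sm[sft]{\kk}}\op}{\calg{\D{\ind{\mhs^{\mr{p}}_{\KK}}}\scr{\otimes}{}}}
\]
under which $\hat{\Gamma}_{\mr{Hdg}}$ corresponds to the desired symmetric monoidal functor $\Gamma^{\otimes}_{\mr{Hdg}}$; its underlying functor $\Gamma_{\mr{Hdg}}$ is then obtained by composing $\hat{\Gamma}_{\mr{Hdg}}$ with the forgetful functor $\calg{\D{\ind{\mhs^{\mr{p}}_{\KK}}}\scr{\otimes}{}} \to \D{\ind{\mhs^{\mr{p}}_{\KK}}}$. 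As the argument is a direct combination of \ref{tannakian.10} with the universal property of the coCartesian monoidal structure, no real obstacle arises; the substantive content of the corollary is already contained in the constructions of \S1 and \S2.
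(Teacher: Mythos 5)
Your first step (compose $\tilde{\Gamma}_{\mr{Hdg}}$ with the localization of \ref{tannakian.8} and the equivalence $\phi$ of \ref{tannakian.10} to obtain a functor into $\calg{\D{\ind{\mhs^{\mr{p}}_{\KK}}}\scr{\otimes}{}}$) is exactly what the paper does. The gap is in the second step: the universal property of the coCartesian structure (\cite[2.4.3.18]{Lurie_higher-algebra}, or \cite[3.2.4.9]{Lurie_higher-algebra} as the paper cites it) identifies functors $\prns{\sm[sft]{\kk}}\op \to \calg{\D{\ind{\mhs^{\mr{p}}_{\KK}}}\scr{\otimes}{}}$ with \emph{lax} symmetric monoidal functors $\prns{\sm[sft]{\kk}}\scr{\mr{op}, \amalg}{} \to \D{\ind{\mhs^{\mr{p}}_{\KK}}}\scr{\otimes}{}$, not with symmetric monoidal ones, so the equivalence you display is not correct as stated. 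The lax functor is symmetric monoidal precisely when $\hat{\Gamma}_{\mr{Hdg}}$ preserves finite coproducts; since coproducts in $\calg{\D{\ind{\mhs^{\mr{p}}_{\KK}}}\scr{\otimes}{}}$ have underlying object the tensor product (\cite[3.2.4.8]{Lurie_higher-algebra}) and the coproduct of $X$ and $Y$ in $\prns{\sm[sft]{\kk}}\op$ is $X \times_{\kk} Y$, this amounts to showing that the canonical maps $\Gamma_{\mr{Hdg}}\prns{X} \otimes \Gamma_{\mr{Hdg}}\prns{Y} \to \Gamma_{\mr{Hdg}}\prns{X \times_{\kk} Y}$ are equivalences. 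That is a genuine geometric input, not something already contained in \S1--\S2: it is the K\"unneth formula. The paper checks it by composing with the conservative symmetric monoidal functor $\D{\ind{\mhs^{\mr{p}}_{\KK}}}\scr{\otimes}{} \to \D{\mod{\ab}_{\KK}}\scr{\otimes}{}$, reducing to the K\"unneth formula for Betti cohomology, and then using conservativity of the forgetful functor $\psi$ to lift the conclusion back to algebras. Your closing claim that ``no real obstacle arises'' is therefore exactly where the argument is incomplete; without the K\"unneth verification you only obtain a lax symmetric monoidal $\Gamma^{\otimes}_{\mr{Hdg}}$.
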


\begin{proof}
Recall that $\prns{\sm[sft]{\kk}}\scr{\mr{op}, \amalg}{}$ denotes the coCartesian symmetric monoidal structure \textup{(\cite[\S2.4.3]{Lurie_higher-algebra})}.
Let $\tilde{\Gamma}'_{\mr{Hdg}}$ be the composite of $\tilde{\Gamma}_{\mr{Hdg}}$, the localization 
\[
\lambda: 
\calg{\cpx{\ind{\mhs^{\mr{p}}_{\KK}}}\scr{\otimes}{}} 
\to 
\calg{\cpx{\ind{\mhs^{\mr{p}}_{\KK}}}\scr{\otimes}{}}\brk{\mf{W}^{-1}}
\]
(\ref{tannakian.8}) and the equivalence $\phi$ of \ref{tannakian.10}.
Let $\Gamma_{\mr{Hdg}}$ denote the composite of $\tilde{\Gamma}'_{\mr{Hdg}}$ with the forgetful functor $\psi: \calg{\D{\ind{\mhs^{\mr{p}}_{\KK}}}\scr{\otimes}{}} \to \D{\ind{\mhs^{\mr{p}}_{\KK}}}$.
By \cite[3.2.4.9]{Lurie_higher-algebra}, $\Gamma_{\mr{Hdg}}$ underlies a lax symmetric monoidal functor $\Gamma^{\otimes}_{\mr{Hdg}}$ with respect to the coCartesian symmetric monoidal structure on $\prns{\sm[sft]{\kk}}\op$ and $\Gamma^{\otimes}_{\mr{Hdg}}$ is symmetric monoidal if $\tilde{\Gamma}'_{\mr{Hdg}}$ preserves finite coproducts.
The object of $\D{\ind{\mhs^{\mr{p}}_{\KK}}}$ underlying the coproduct of $\tilde{\Gamma}'_{\mr{Hdg}}\prns{X}$ and $\tilde{\Gamma}'_{\mr{Hdg}}\prns{Y}$ in $\calg{\D{\ind{\mhs^{\mr{p}}_{\KK}}}\scr{\otimes}{}}$, i.e., its image under $\psi$, is the tensor product $\Gamma_{\mr{Hdg}}\prns{X} \otimes \Gamma_{\mr{Hdg}}\prns{Y}$ (\cite[3.2.4.8]{Lurie_higher-algebra}).
By the K\"unneth formula, $\Gamma_{\mr{Hdg}}\prns{X} \otimes \Gamma_{\mr{Hdg}}\prns{Y} \simeq \Gamma_{\mr{Hdg}}\prns{X \times_{\kk} Y}$ and $X \times_{\kk} Y$ is the coproduct of $X$ and $Y$ in $\prns{\sm[sft]{\kk}}\op$.
Indeed, composing with the conservative symmetric monoidal functor $\D{\ind{\mhs^{\mr{p}}_{\KK}}}\scr{\otimes}{} \to \D{\mod{\ab}_{\KK}}\scr{\otimes}{}$ reduces the problem to the K\"unneth formula for Betti cohomology.
As $\psi$ is conservative, the claim follows.
\end{proof}

\begin{defn}
\label{rect.4}
Let $S$ be a quasi-compact quasi-separated scheme.
Let $\mf{S} \subseteq \sch{S}$ be a full subcategory stable under fiber products and containing $\varnothing$, $S$ and $\mb{A}^1_{S}$, $\mc{C}$ a qcategory, $F: \mf{S}\op \to \mc{C}$ a functor, and $\mf{Q}$ a class of Cartesian squares in $\mf{S}$ of the form
\begin{equation}
\label{rect.4.1}
\xymatrix{
Y'
\ar[r]_{f'}
\ar[d]_-{g'}
\ar@{}[dr]|-Q
&
Y
\ar[d]^-g
\\
X'
\ar[r]^-f
&
X.
}
\end{equation}
\begin{enumerate}
\item
We say that $F$ is:
\begin{enumerate}
\item
\emph{excisive with respect to $\mf{Q}$} if $F\prns{\varnothing}$ is a final object of $\mc{C}$ and, for each $Q \in \mf{Q}$ as in \eqref{rect.4.1}, the square $F\prns{Q}$ is Cartesian in $\mc{C}$;
\item
\emph{Nisnevich excisive} if it is excisive with respect to the class $\mf{Q}_{\mr{Nis}}\prns{\mf{S}}$ of squares of the form \eqref{rect.4.1} such that $f$ is an open immersion, $g$ an \'etale morphism and the induced morphism $g^{-1}\prns{X-X'}_{\mr{red}} \to \prns{X-X'}_{\mr{red}}$ is an isomorphism;
\item
\emph{$\mr{cdh}$-excisive} if it is Nisnevich excisive and also excisive with respect to the class $\mf{Q}_{\mr{cdh}}\prns{\mf{S}}$ of squares of the form \eqref{rect.4.1} such that $f$ is a closed immersion, $g$ is proper and the induced morphism $g^{-1}\prns{X-X'} \to X-X'$ is an isomorphism;
\item
\emph{$\mr{scdh}$-excisive} if it is Nisnevich excisive and also excisive with respect to the class $\mf{Q}_{\mr{scdh}}\prns{\mc{S}}$ of squares of the form \eqref{rect.4.1} such that $X$, $X'$, $Y$ and $Y'$ are smooth $S$-schemes, $f$ is a closed immersion and $g$ is the blow-up of $X$ along $X'$;
\item
\emph{$\mb{A}^1$-invariant} if, for each $X \in \mf{S}$, the morphism $F\prns{X} \to F\prns{\mb{A}^1_X}$ induced by the canonical projection is an equivalence in $\mc{C}$.
\end{enumerate}
\item
Let $\mc{C} = \spc{}$ be the qcategory of spaces.
If $\tau = \mr{Nis}$ \resp{$\tau = \mr{cdh}$, resp. $\tau = \mr{scdh}$}, then we let $\sh{\mf{S}}{\spc{}}_{\tau} \subseteq \psh{\mf{S}}{\mc{C}}$ denote the full subqcategory spanned by the Nisnevich-excisive \resp{$\mr{cdh}$-excisive, resp. $\mr{scdh}$-excisive} functors,
and we let $\H{\mf{S}}_{\tau} \subseteq \sh{\mf{S}}{\spc{}}_{\tau}$ denote the full subqcategory spanned by the functors which are moreover $\mb{A}^1$-invariant.
The inclusions $\sh{\mf{S}}{\spc{}}_{\tau} \hookrightarrow \psh{\mf{S}}{\spc{}}$ and $\H{\mf{S}}_{\tau} \hookrightarrow \sh{\mf{S}}{\spc{}}_{\tau}$ are reflective subqcategories with respective left adjoints $\lambda_{\tau}$ and $\lambda_{\mb{A}^1}$.
Indeed, by the Yoneda lemma (\cite[5.1.3.1]{Lurie_higher-topos}), the excision property for $F \in \psh{\mf{S}}{\spc{}}$ is equivalent to requiring that $F$ be $\mf{W}_{\tau}$-local (\cite[5.5.4.1]{Lurie_higher-topos}), where $\mf{W}_{\tau}$ is the class of morphisms of the form
\[
\zeta_Q:
\yon{}{X'} \amalg_{\yon{}{Y'}} \yon{}{Y}
\to
\yon{}{X}
\]
induced by the universal property of the pushout with $Q \in \mf{Q}_{\tau}\prns{\mf{S}}$, where $\yon{}{}: \mf{S} \hookrightarrow \psh{\mf{S}}{\spc{}}$ denotes the Yoneda embedding.
Similarly, $\mb{A}^1$-invariance is equivalent to requiring that $F$ be $\mf{W}_{\mb{A}^1}$-local, where $\mf{W}_{\mb{A}^1}$ is the class of morphisms of the form
$
\yon{}{\mb{A}^1_X}
\to
\yon{}{X}
$ with $X \in \mf{S}$.
The subqcategories $\sh{\mf{S}}{\spc{}}_{\tau}$ and $\H{\mf{S}}_{\tau}$ are therefore reflective by \cite[5.5.4.15]{Lurie_higher-topos}.
\end{enumerate}
\end{defn}

\begin{prop}
\label{rect.5}
Let $\mc{C}^{\otimes}$ be a stable locally $\mf{U}$-presentable symmetric monoidal qcategory and $\tilde{F}: \prns{\sm[sft]{\kk}}\op \to \calg{\mc{C}^{\otimes}}$ a Nisnevich-excisive, $\mb{A}^1$-invariant functor.
\begin{enumerate}
\item
If $\iota: \sm[sft]{\kk} \hookrightarrow \sch[ft]{\kk}$ denotes the inclusion, there exists a $\mr{cdh}$-excisive, $\mb{A}^1$-invariant functor $\tilde{\overline{F}}: \prns{\sch[ft]{\kk}}\op \to \calg{\mc{C}^{\otimes}}$ such that $\tilde{\overline{F}}\iota\op \simeq \tilde{F}$.
\item
If $\tilde{F}$ corresponds to a symmetric monoidal functor $F^{\otimes}: \prns{\sm[sft]{\kk}}\scr{\mr{op}, \amalg}{} \to \mc{C}^{\otimes}$ via \textup{\cite[2.4.3.18]{Lurie_higher-algebra}}, then $\tilde{\overline{F}}$ corresponds to a symmetric monoidal functor $\overline{F}^{\otimes}: \prns{\sch[ft]{\kk}}\scr{\mr{op}, \amalg}{} \to \mc{C}^{\otimes}$.
\end{enumerate}
\end{prop}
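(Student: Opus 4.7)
The plan is to invoke Voevodsky's equivalence
\[
\iota^*\colon \H{\sch[ft]{\kk}}_{\mr{cdh}, \mb{A}^1} \isom \H{\sm[sft]{\kk}}_{\mr{scdh}, \mb{A}^1},
\]
valid because $\kk$ has characteristic zero so that Hironaka's resolution of singularities applies, together with the Morel-Voevodsky observation that every Nisnevich-excisive, $\mb{A}^1$-invariant $\spc{}$-valued presheaf on $\sm[sft]{\kk}$ is automatically scdh-excisive: the blow-up square of a smooth closed subscheme inside a smooth ambient scheme becomes a pushout after Nisnevich-$\mb{A}^1$-localization. Composing these two facts yields
\[
\iota^*\colon \H{\sch[ft]{\kk}}_{\mr{cdh}, \mb{A}^1} \isom \H{\sm[sft]{\kk}}_{\mr{Nis}, \mb{A}^1},
\]
which is the geometric heart of the proposition.

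For part (i), I would promote this equivalence to an equivalence between the $\mc{D}$-valued analogues\textemdash i.e., the full subqcategories of $\psh{\sch[ft]{\kk}}{\mc{D}}$ and $\psh{\sm[sft]{\kk}}{\mc{D}}$ spanned, respectively, by the cdh-excisive $\mb{A}^1$-invariant and the Nisnevich-excisive $\mb{A}^1$-invariant functors\textemdash for any locally $\mf{U}$-presentable target $\mc{D}$. This upgrade is formal, since by \cite[5.5.4.15]{Lurie_higher-topos} each side is a reflective localization with respect to a $\mf{U}$-small class of morphisms that is determined by the same geometric data on either category of schemes. Specializing to $\mc{D} := \calg{\mc{C}^{\otimes}}$, which is locally $\mf{U}$-presentable by \cite[3.2.3.5]{Lurie_higher-algebra}, then produces the desired extension $\tilde{\overline{F}}$.

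For part (ii), \cite[2.4.3.18]{Lurie_higher-algebra} identifies symmetric monoidal functors from the coCartesian structure on $\prns{\sm[sft]{\kk}}\op$ to $\mc{C}^{\otimes}$ with finite-coproduct-preserving functors into $\calg{\mc{C}^{\otimes}}$, and similarly for $\sch[ft]{\kk}$. Since finite coproducts in $\prns{\sch[ft]{\kk}}\op$ are fiber products over $\kk$, I would deduce the symmetric monoidality of $\overline{F}^{\otimes}$ from that of $F^{\otimes}$ by checking that the equivalence $\iota^*$ of part (i) intertwines the respective Cartesian product structures, or equivalently the Day convolutions on presheaves. This reduces in turn to the stability of the classes of Nisnevich, cdh, scdh squares and of $\mb{A}^1$-projections under fiber products along arbitrary morphisms, which is immediate.

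The principal obstacle is the coherent upgrade of Voevodsky's equivalence\textemdash classically formulated for $\spc{}$-valued presheaves\textemdash to $\calg{\mc{C}^{\otimes}}$-valued presheaves, compatibly with the symmetric monoidal structure. In particular, one must verify that the reflective localizations $\lambda_{\mr{Nis}}, \lambda_{\mr{scdh}}, \lambda_{\mr{cdh}}$ and $\lambda_{\mb{A}^1}$ descend compatibly to commutative algebras, that they interact correctly with Day convolution, and that the Morel-Voevodsky scdh-from-Nis implication transfers from $\spc{}$-valued to $\calg{\mc{C}^{\otimes}}$-valued presheaves.
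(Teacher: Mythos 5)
Your part (i) follows the same skeleton as the paper (upgrade $\tilde{F}$ from Nisnevich- to $\mr{scdh}$-excisive, extend along the universal property of presheaf categories and the reflective localizations of \ref{rect.4}$(ii)$, then apply Voevodsky's equivalence $\H{\sch[ft]{\kk}}_{\mr{cdh}} \simeq \H{\sm[sft]{\kk}}_{\mr{scdh}}$), but you justify the first step differently: you assert, as a space-level ``Morel--Voevodsky observation,'' that every Nisnevich-excisive, $\mb{A}^1$-invariant $\spc{}$-valued presheaf on $\sm[sft]{\kk}$ is automatically $\mr{scdh}$-excisive, and you never use the hypothesis that $\mc{C}$ is stable. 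That unstable claim is exactly the point that needs a precise reference or proof; the paper deliberately avoids relying on it. Its argument passes through the limit-reflecting forgetful functor $\calg{\mc{C}^{\otimes}} \to \mc{C}$ and the Yoneda reduction to the presheaves $\map{C}{F\prns{-}}_{\mc{C}}$, which are infinitely deloopable \emph{because $\mc{C}$ is stable}, and then invokes Blander's Lemma 5.1; in other words, only a stable form of descent for smooth blow-up squares is needed. As written, your step (``becomes a pushout after Nisnevich-$\mb{A}^1$-localization'') is an unproved strengthening, and it is the reason the stability hypothesis appears in the statement.

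Part (ii) has a genuine gap. You propose to transfer strong monoidality ``formally'' by checking that $\iota^*$ intertwines Cartesian products, or Day convolutions, and you say this reduces to stability of the classes of distinguished squares under fiber products. But the extension $\tilde{\overline{F}}$ is built from a \emph{limit-preserving} functor on $\H{\sm[sft]{\kk}}_{\mr{scdh}}\op$: a singular $X$ is resolved into a colimit of (localized Yoneda images of) smooth schemes, so $\overline{F}\prns{X}$ is computed by a \emph{limit} in $\calg{\mc{C}^{\otimes}}$, and the Künneth map $\alpha_{XY}: \overline{F}\prns{X} \otimes \overline{F}\prns{Y} \to \overline{F}\prns{X \times_{\kk} Y}$ then requires commuting $\otimes$ past those limits, which $\otimes$ does not do in general; equivalently, the Day-convolution universal property you would need is a statement about cocontinuous monoidal functors, and the relevant target here is an opposite category that is not presentably monoidal, so the transfer is not formal. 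This is precisely what the paper's proof of $(ii)$ supplies by hand: using \cite[3.2.4.9]{Lurie_higher-algebra} it reduces to the binary equivalences $\alpha_{XY}$, and proves them by a Noetherian induction on dimension using Hironaka resolutions to produce $\mr{cdh}$-squares, reduction along $X_{\mr{red}} \hookrightarrow X$, a finite Zariski induction to drop separatedness, and, crucially, the fact that in the \emph{stable} $\mc{C}$ the functors $\prns{-} \otimes C$ are exact and hence preserve the finite (Cartesian) limits occurring in these descent squares. Your outline omits this entire inductive argument, and the phrase ``which is immediate'' conceals exactly the non-formal content of the proposition.
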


\begin{proof}
By \cite[3.2.3.5]{Lurie_higher-algebra}, $\calg{\mc{C}^{\otimes}}$ is locally $\mf{U}$-presentable.
We claim that $\tilde{F}$ is $\mr{scdh}$-excisive.
The forgetful functor $\phi: \calg{\mc{C}^{\otimes}} \to \mc{C}$ reflects limits (\cite[3.2.2.5]{Lurie_higher-algebra}), so it suffices to show that the composite $F := \phi\tilde{F}$ is $\mr{scdh}$-excisive.
For each $Q \in \mf{Q}_{\mr{scdh}}\prns{\sm[ft]{\kk}}$, the square $F\prns{Q}$ is Cartesian in $\mc{C}$ if and only if, for each $C \in \mc{C}$, the square $\map{C}{F\prns{Q}}_{\mc{C}}$ is Cartesian in $\spc{}_*$.
Indeed, this follows from the Yoneda lemma (\cite[5.1.3.1]{Lurie_higher-topos}).
Thus, $F$ is $\mr{scdh}$-excisive if and only if the functor $\map{C}{F\prns{-}}_{\mc{C}}: \prns{\sm[ft]{\kk}}\op \to \spc{}_*$ is $\mr{scdh}$-excisive for each $C \in \mc{C}$.
Since $\mc{C}$ is stable, we have an equivalence $\map{\Sigma^1C}{F\prns{-}}_{\mc{C}} \simeq \Omega^1\map{C}{F\prns{-}}_{\mc{C}}$ for each $C \in \mc{C}$, where $\Sigma^1$ denotes the suspension endofunctor of $\mc{C}$ and $\Omega^1$ denotes the loop functor of $\spc{}_*$ (\cite[1.1.2.6]{Lurie_higher-algebra}).
The claim that $F$, and hence also $\tilde{F}$, is $\mr{scdh}$-excisive therefore follows from \cite[Lemma 5.1]{Blander_local-projective}. 

By \cite[5.1.5.6]{Lurie_higher-topos}, there is a canonical equivalence
\[
\fun{\psh{\sm[sft]{\kk}}{\spc{}}\op}{\calg{\mc{C}^{\otimes}}}^{\mr{cont}}
\isom
\psh{\sm[sft]{\kk}}{\calg{\mc{C}^{\otimes}}}
\]
under which $\tilde{F}$ corresponds to an object in the essential image of the fully faithful functor
\[
\fun{\H{\sm[sft]{\kk}}_{\mr{scdh}}\op}{\calg{\mc{C}^{\otimes}}}^{\mr{cont}}
\hookrightarrow
\fun{\psh{\sm[sft]{\kk}}{\spc{}}\op}{\calg{\mc{C}^{\otimes}}}^{\mr{cont}}
\]
given by composition with the localization $\lambda^{\mr{op}}_{\mb{A}^1}\lambda^{\mr{op}}_{\mr{scdh}}: \psh{\sm[sft]{\kk}}{\spc{}}\op \to \H{\sm[sft]{\kk}}_{\mr{scdh}}\op$ of \ref{rect.4}$(ii)$.
Let $\tilde{F}': \H{\sm[sft]{\kk}}_{\mr{scdh}}\op \to \calg{\mc{C}^{\otimes}}$ denote the corresponding functor.
By \cite[4.7]{Voevodsky_unstable-motivic}, composition with $\iota\op$ induces an equivalence $\iota^*:\H{\sch[ft]{\kk}}_{\mr{cdh}} \isom \H{\sm[sft]{\kk}}_{\mr{scdh}}$.
Let $\tilde{\overline{F}}$ denote the composite 
\[
\prns{\sch[ft]{\kk}}\op
\to
\H{\sch[ft]{\kk}}_{\mr{cdh}}\op
\xrightarrow{\iota^{*, \mr{op}}}
\H{\sm[sft]{\kk}}_{\mr{scdh}}\op
\xrightarrow{\tilde{F}'}
\calg{\mc{C}^{\otimes}}.
\]
By construction, $\tilde{\overline{F}}$ is $\mr{cdh}$-excisive and $\mb{A}^1$-invariant, and $\tilde{\overline{F}}\iota\op \simeq \tilde{F}$, which proves $(i)$.
Note that $\overline{F}$ inherits $\mb{A}^1$-invariance and $\mr{cdh}$-excisiveness from $\tilde{\overline{F}}$.

Suppose the lax symmetric monoidal functor $F^{\otimes}: \prns{\sm[sft]{\kk}}\scr{\mr{op}, \amalg}{} \to \mc{C}^{\otimes}$ associated with $\tilde{F}$ by \cite[2.4.3.18]{Lurie_higher-algebra} is in fact a symmetric monoidal functor.
We must show that the lax symmetric monoidal functor $\overline{F}^{\otimes}$ associated with $\tilde{\overline{F}}$ is also symmetric monoidal.
By \cite[3.2.4.9]{Lurie_higher-algebra}, this amounts to showing that, for all $X$ and $Y$ in $\sch[sft]{\kk}$, the canonical morphism $\alpha_{XY}: \overline{F}\prns{X} \otimes \overline{F}\prns{Y} \to \overline{F}\prns{X \times_{\kk} Y}$ is an equivalence.

Fix objects $X$ and $Y$ of $\sch[ft]{\kk}$.
Since $\tilde{\overline{F}}$ restricts to a functor equivalent to $\tilde{F}$ on $\prns{\sm[sft]{\kk}}\op$ and $F^{\otimes}$ is symmetric monoidal, $\alpha_{XY}$ is an equivalence if $X$ and $Y$ are smooth, separated $\kk$-schemes.
Also, if $X = \varnothing$, then $X \times_{\kk} Y = \varnothing$ and $\overline{F}\prns{X}$, $\overline{F}\prns{X} \otimes \overline{F}\prns{Y}$ and $\overline{F}\prns{X \times_{\kk} Y}$ are zero objects in $\mc{C}$, so we may assume $X$ and $Y$ are nonempty and $X$ is singular.

Suppose $X$ and $Y$ are separated over $\kk$ and $Y$ is smooth over $\kk$.
If $\dim{X} = 0$, then $X_{\mr{red}}$ is also smooth and $X_{\mr{red}} \times_{\kk} Y = \prns{X \times_{\kk} Y}_{\mr{red}}$.
By $\mr{cdh}$-excision, the inclusion $X_{\mr{red}} \hookrightarrow X$ induces an equivalence $\overline{F}\prns{X} \isom \overline{F}\prns{X_{\mr{red}}}$, so $\alpha_{XY}$ is an equivalence, because we have a homotopy commutative square
\[
\xymatrix{
\overline{F}\prns{X} \otimes \overline{F}\prns{Y}
\ar[r]_-{\alpha_{XY}}
\ar[d]_-{\sim}
&
\overline{F}\prns{X \times_{\kk} Y}
\ar[d]^-{\sim}
\\
\overline{F}\prns{X_{\mr{red}}} \otimes \overline{F}\prns{Y}
\ar[r]^-{\alpha_{X_{\mr{red}}Y}}
&
\overline{F}\prns{X_{\mr{red}} \times_{\kk} Y}
}
\]
If $\dim{X} > 0$, suppose $\alpha_{SY}$ is an equivalence for all separated $\kk$-schemes $S$ of dimension $< \dim{X}$.
By \cite{Hironaka_resolution-of-singularities}, there exists an element
\begin{equation}
\label{rect.5.1}
\xymatrix{
\overline{Z}
\ar[r]
\ar[d]
\ar@{}[dr]|Q
&
\overline{X}
\ar[d]
\\
Z
\ar[r]
&
X
}
\end{equation}
of $\mf{Q}_{\mr{cdh}}\prns{\sch[ft]{\kk}}$ such that $\overline{X}$ is a smooth $\kk$-scheme, $\dim{Z} < \dim{X}$ and $\dim{\overline{Z}} < \dim{\overline{X}} = \dim{X}$. 
Tensoring $\overline{F}\prns{Q}$ with $\overline{F}\prns{Y}$, we have a Cartesian square
\[
\xymatrix{
\overline{F}\prns{X} \otimes \overline{F}\prns{Y}
\ar[r]
\ar[d]
&
\overline{F}\prns{\overline{X}} \otimes \overline{F}\prns{Y}
\ar[d]
\\
\overline{F}\prns{Z} \otimes \overline{F}\prns{Y}
\ar[r]
&
\overline{F}\prns{\overline{Z}} \otimes \overline{F}\prns{Y}.
}
\]
Indeed, $\mc{C}^{\otimes}$ is a stable symmetric monoidal qcategory, so the endofunctor $\prns{-} \otimes C$ is exact for each $C \in \mc{C}$ and, in particular, it preserves Cartesian squares.
The morphisms $\alpha_{ZY}$ and $\alpha_{\overline{Z}Y}$ are equivalences by the inductive hypothesis and $\alpha_{\overline{X}Y}$ is an equivalence since $\overline{X}$ and $Y$ are both smooth, separated $\kk$-schemes.
It follows that $\alpha_{XY}$ is an equivalence.

Suppose both $X$ and $Y$ are separated $\kk$-schemes.
If $\dim{X} = 0$, then $X_{\mr{red}}$ is smooth over $\kk$, so $\alpha_{X_{\mr{red}}Y}$ is an equivalence by the previous case and we have a homotopy commutative square
\[
\xymatrix{
\overline{F}\prns{X} \otimes \overline{F}\prns{Y}
\ar[r]
\ar[d]_-{\sim}
&
\overline{F}\prns{X \times_{\kk} Y}
\ar[d]^-{\sim}
\\
\overline{F}\prns{X_{\mr{red}}} \otimes \overline{F}\prns{Y}
\ar[r]^-{\sim}
&
\overline{F}\prns{X_{\mr{red}}\times_{\kk}Y}
}
\]
in which the vertical arrows are equivalences by $\mr{cdh}$-excision, since $X_{\mr{red}} \hookrightarrow X$ is a universal homeomorphism.
In general, suppose $\alpha_{SY}$ is an equivalence for each separated $\kk$-scheme $S$ of dimension $< \dim{X}$ and consider the square $Q$ of \eqref{rect.5.1}.
Tensoring $\overline{F}\prns{Q}$ with $\overline{F}\prns{Y}$ and again using the fact that $\mc{C}^{\otimes}$ is a stable symmetric monoidal qcategory, it suffices to show that $\alpha_{ZY}$, $\alpha_{\overline{Z}Y}$ and $\alpha_{\overline{X}Y}$ are equivalences.
However, $\alpha_{ZY}$ and $\alpha_{\overline{Z}Y}$ are equivalences by the inductive hypothesis, and $\alpha_{\overline{X}Y}$ is also an equivalence: permuting the tensor factors, it becomes $\alpha_{Y\overline{X}}$ and $\overline{X}$ is smooth, so we are in the previous case.

If $Y$ is separated and $X$ is arbitrary, choose a finite Zariski cover $\brc{j_{\beta}: X_{\beta} \hookrightarrow X}_{1 \leq \beta \leq n}$ such that $X_{\beta}$ is separated for each $1 \leq \beta \leq n$ and $n > 1$.
Let $X' := \bigcup_{1 \leq \beta <n}X_{\beta}$.
We have an element
\[
\xymatrix{
X' \cap X_n
\ar[r]_-{j''}
\ar[d]_-{j'_n}
\ar@{}[dr]|Q
&
X_n
\ar[d]^-{j_n}
\\
X'
\ar[r]^-{j'}
&
X
}
\]
of $\mf{Q}_{\mr{Nis}}\prns{\sch[ft]{\kk}}$ and $X' \cap X_n$ and $X'$ are both unions of $n-1$ separated open subschemes.
Applying the Nisnevich-excisive functor $\overline{F}$ to the square $Q$ and tensoring $\overline{F}\prns{Q}$ with $\overline{F}\prns{Y}$, we find by induction on $n$ that $\alpha_{XY}$ is an equivalence.
Inducting now on the number of elements in a Zariski cover of $Y$ by separated $S$-subschemes, we find that $\alpha_{XY}$ is an equivalence for arbitrary $X$ and $Y$.
\end{proof}

\begin{thm}
\label{rect.6}
\
\begin{enumerate}
\item
There exists a symmetric monoidal functor $\Gamma^{\otimes}_{\mr{Hdg}}: \prns{\sch[ft]{\kk}}\scr{\mr{op}, \amalg}{} \to \D{\ind{\mhs^{\mr{p}}_{\KK}}}\scr{\otimes}{}$ such that, for each $X \in \sch[ft]{\kk}$ and each $r \in \mb{Z}$, $\h^r\Gamma_{\mr{Hdg}}\prns{X}$ is naturally isomorphic to $\mr{H}^r_{\mr{Betti}}\prns{X \otimes_{\kk}\mb{C},\KK}$ equipped with the mixed Hodge $\KK$-structure of \textup{\cite[8.2.1]{Deligne_hodgeIII}}.
\item
The underlying functor $\Gamma_{\mr{Hdg}}$ factors up to equivalence as 
\[
\prns{\sch[ft]{\kk}}\op 
\xrightarrow{\tilde{\Gamma}_{\mr{Hdg}}} 
\calg{\cpx{\ind{\mhs^{\mr{p}}_{\KK}}}\scr{\otimes}{}}
\xrightarrow{\phi'}
\D{\ind{\mhs^{\mr{p}}_{\KK}}},
\]
where $\phi'$ is induced by the functor $\phi$ of \textup{\ref{tannakian.10}}.
\end{enumerate}
\end{thm}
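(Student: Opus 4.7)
The plan is to derive part (i) by applying \ref{rect.5} to the symmetric monoidal functor of \ref{rect.3}, and then to derive part (ii) by rectifying the underlying functor $\Gamma_{\mr{Hdg}}$ through the combinatorial model category of \ref{tannakian.8} via Lurie's rectification \cite[1.3.4.25]{Lurie_higher-algebra} and the equivalence of \ref{tannakian.10}.

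The first step is to verify the hypotheses of \ref{rect.5} for the symmetric monoidal functor
\[
\Gamma^{\otimes}_{\mr{Hdg}}: \prns{\sm[sft]{\kk}}\scr{\mr{op}, \amalg}{} \to \D{\ind{\mhs^{\mr{p}}_{\KK}}}\scr{\otimes}{}
\]
of \ref{rect.3}; concretely, its adjoint $\tilde{F}: \prns{\sm[sft]{\kk}}\op \to \calg{\D{\ind{\mhs^{\mr{p}}_{\KK}}}\scr{\otimes}{}}$ under \cite[2.4.3.18]{Lurie_higher-algebra} must be Nisnevich excisive and $\mb{A}^1$-invariant. Since the forgetful functor $\calg{\D{\ind{\mhs^{\mr{p}}_{\KK}}}\scr{\otimes}{}} \to \D{\ind{\mhs^{\mr{p}}_{\KK}}}$ reflects limits (\cite[3.2.2.5]{Lurie_higher-algebra}) and since the induced derived fiber functor $\D{\ind{\mhs^{\mr{p}}_{\KK}}} \to \D{\mod{\ab}_{\KK}}$ coming from \ref{tannakian.5} is conservative and $\trun$-exact (as in \ref{tannakian.6}), both properties can be tested on the underlying complex of $\KK$-vector spaces. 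At that level $\tilde{F}$ computes the singular cochain complex of $X\prns{\mb{C}}^{\mr{an}}$ up to quasi-isomorphism, so Nisnevich excision reduces to the Mayer--Vietoris sequence for Nisnevich squares of smooth separated $\mb{C}$-schemes, and $\mb{A}^1$-invariance reduces to the homotopy invariance of Betti cohomology. Proposition \ref{rect.5}(ii) then supplies the required symmetric monoidal extension on $\prns{\sch[ft]{\kk}}\scr{\mr{op}, \amalg}{}$, and the agreement of $\h^r \Gamma_{\mr{Hdg}}$ with Deligne's mixed Hodge structures of \cite[8.2.1]{Deligne_hodgeIII} is inherited from \ref{rect.2} on smooth separated schemes and propagated to arbitrary finite-type $\kk$-schemes by the common $\mr{cdh}$-excisive extension procedure.

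For part (ii), the underlying functor $\Gamma_{\mr{Hdg}}: \prns{\sch[ft]{\kk}}\op \to \D{\ind{\mhs^{\mr{p}}_{\KK}}}$ factors through $\calg{\D{\ind{\mhs^{\mr{p}}_{\KK}}}\scr{\otimes}{}}$ via the lax symmetric monoidal structure of \cite[3.2.4.9]{Lurie_higher-algebra}. Composing with the inverse of the equivalence $\phi$ of \ref{tannakian.10}, we obtain a functor landing in $\calg{\cpx{\ind{\mhs^{\mr{p}}_{\KK}}}\scr{\otimes}{}}\brk{\mf{W}^{-1}}$, the qcategory underlying the $\mf{U}$-combinatorial model category of \ref{tannakian.8}. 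Applying \cite[1.3.4.25]{Lurie_higher-algebra} rectifies this to the strict functor $\tilde{\Gamma}_{\mr{Hdg}}: \prns{\sch[ft]{\kk}}\op \to \calg{\cpx{\ind{\mhs^{\mr{p}}_{\KK}}}\scr{\otimes}{}}$ whose composite with $\phi'$ is equivalent to $\Gamma_{\mr{Hdg}}$.

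The main obstacle is the identification with Deligne's mixed Hodge structures on singular $\kk$-schemes: both our extension and Deligne's construction are characterized by their restriction to smooth proper schemes together with a descent property, but Deligne proceeds via smooth proper hypercoverings while \ref{rect.5} uses iterated $\mr{cdh}$-excision along smooth blow-ups. Reconciling the two requires either the uniqueness theorem for functorial mixed Hodge structures extending the smooth proper case in the style of \cite{Guillen-Navarro_critere-d'extension}, or a direct comparison between the cubical and simplicial resolution procedures at the level of cohomology.
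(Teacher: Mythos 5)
Your proposal follows essentially the same route as the paper: part (i) is obtained by feeding the functor of \ref{rect.3} into \ref{rect.5}, checking $\mb{A}^1$-invariance and Nisnevich excision after composing with the conservative ``underlying complex'' functor to $\D{\mod{\ab}_{\KK}}$, and part (ii) by classifying the result via \cite[2.4.3.18]{Lurie_higher-algebra}, inverting the equivalence $\phi$ of \ref{tannakian.10}, and rectifying with \cite[1.3.4.25]{Lurie_higher-algebra} using the model structure of \ref{tannakian.8}. The comparison with Deligne's mixed Hodge structures on singular schemes that you flag as an obstacle is not treated in any greater detail in the paper either---its proof simply asserts that claim (i) follows from \ref{rect.5}---so your caveat marks a point the paper elides rather than a divergence from its argument.
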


\begin{proof}
For brevity, let $\mc{C}^{\otimes} := \D{\ind{\mhs^{\mr{p}}_{\KK}}}\scr{\otimes}{}$.
Let $F^{\otimes}: \prns{\sm[sft]{\kk}}\scr{\mr{op}, \amalg}{} \to \mc{C}^{\otimes}$ denote the symmetric monoidal functor denoted by $\Gamma^{\otimes}_{\mr{Hdg}}$ in \ref{rect.3}.
Composing $F$ with the ``underlying complex of $\KK$-modules'' functor $\omega: \mc{C} \to \D{\mod{\ab}_{\KK}}$ results in a functor assigning to each $X \in \sm[sft]{\kk}$ the $\KK$-linear singular cochain complex of $X\prns{\mb{C}}^{\mr{an}}$ or, in any case, a complex of $\KK$-modules quasi-isomorphic to it.

Betti cohomology is $\mb{A}^1$-homotopy invariant and Nisnevich excisive:
the $\mb{A}^1$-invariance follows from the contractibility of $\mathbf{A}^1_{\mb{C}}\prns{\mb{C}}^{\mr{an}}$ and homotopy invariance of singular cohomology;
one may check that it is Nisnevich excisive by standard cohomological descent arguments (\cite[Expos\'e V \emph{bis}, 4.1.8, 5.2.3]{SGA4b}), or use the fact (\cite[3.3]{Ayoub_operations-de-Grothendieck}) that the derived categories $\mr{D}\prns{X\prns{\mb{C}}^{\mr{an}},\ \KK}$ of analytic sheaves of complexes of $\KK$-modules form a stable homotopy 2-functor (\cite[1.4.1]{Ayoub_six-operationsI}) and remark that the existence of localization sequences (\cite[1.4.9]{Ayoub_six-operationsI}) implies Nisnevich excision.
As $\omega$ is conservative by \ref{tannakian.6}, $F$ is also $\mb{A}^1$-invariant and Nisnevich excisive. 
Claim $(i)$ now follows from \ref{rect.5}.

By \cite[2.4.3.18]{Lurie_higher-algebra}, the functor $\Gamma^{\otimes}_{\mr{Hdg}}$ of $(i)$ is classified by an essentially unique functor 
$
\tilde{\Gamma}'_{\mr{Hdg}}:
\prns{\sch[ft]{\kk}}\op
\to
\calg{\mc{C}^{\otimes}}
$.
By \ref{tannakian.10}, we therefore have a functor 
\[
\phi^{-1}\tilde{\Gamma}'_{\mr{Hdg}}:
\prns{\sch[ft]{\kk}}\op 
\to
\calg{\cpx{\ind{\mhs^{\mr{p}}_{\KK}}}\scr{\otimes}{}}\brk{\mf{W}^{-1}}.
\]
By \ref{tannakian.8} and \cite[1.3.4.25]{Lurie_higher-algebra}, we can rectify $\phi^{-1}\tilde{\Gamma}'_{\mr{Hdg}}$ to a functor $\tilde{\Gamma}_{\mr{Hdg}}: \prns{\sch[ft]{\kk}}\op \to \calg{\cpx{\ind{\mhs^{\mr{p}}_{\KK}}}\scr{\otimes}{}}$, which proves $(ii)$.
\end{proof}

\section{Integral coefficients}

\setcounter{thm}{-1}

\begin{notation}
\label{int.0}
Throughout this section, we fix the following:
\begin{enumerate}
\item 
$\bs{\Lambda} \hookrightarrow \mb{R}$, a Noetherian subring of the real numbers of global dimension $\leq 1$ such that $\bs{\Lambda} \otimes_{\mb{Z}}\mb{Q}$ is a field, e.g., $\bs{\Lambda} \in \brc{\mb{Z}, \mb{Q}, \mb{R}}$;
\item
$\KK := \bs{\Lambda} \otimes_{\mb{Z}} \mb{Q}$; and
\item
$\kk \hookrightarrow \mb{C}$, a subfield of the complex numbers.
\end{enumerate}
\end{notation}

\begin{motivation}
Working in the $\KK$-linear setting has simplified things in several ways:
it allowed us to apply the results of \cite[\S2]{Drew_verdier-quotients} to pass from symmetric monoidal differential graded categories to symmetric monoidal qcategories without being forced to deal with cofibrant resolutions of our differential graded categories; 
it allowed us to equip the bounded derived category of $\mhs^{\mr{p}}_{\KK}$ with a symmetric monoidal structure without constructing flat resolutions; and
it allowed us to apply the rectification of \ref{tannakian.10}.

In this section, we show that it is possible to work with integral rather than rational coefficients.
However, whereas in \ref{rect.6} we constructed a presheaf of strictly commutative differential graded algebras, in the integral setting, one may at best hope for a presheaf of $\mb{E}_{\infty}$-algebras.
In fact, we content ourselves to ask for a presheaf of $\mb{E}_{\infty}$-algebras at the level of symmetric monoidal qcategories and set aside the question of establishing an analogue of the rectification result \ref{tannakian.10} for $\mb{E}_{\infty}$-algebras with integral coefficients.
\end{motivation}

\begin{summary}
We begin by constructing a t-structure on the fiber product of two stable qcategories equipped with t-structures over a third (\ref{int.2}) and studying the heart of this t-structure (\ref{int.3}).
We then apply this to show that the derived qcategory of $\mhs^{\mr{p}}_{\bs{\Lambda}}$ is the fiber product of $\D{\mhs^{\mr{p}}_{\KK}}^{\mr{b}}$ and $\D{\mod{\ab}_{\bs{\Lambda}}\scr{}{\aleph_0}}^{\mr{b}}$ over $\D{\mod{\ab}_{\KK}\scr{}{\aleph_0}}$ (\ref{int.5}).
After constructing a symmetric monoidal functor computing $\bs{\Lambda}$-linear Betti cohomology (\ref{int.8}), this allows us to establish in \ref{int.9} the $\bs{\Lambda}$-linear analogue of \ref{rect.6}$(i)$.
\end{summary}

\begin{lemma}
\label{int.1}
Consider a commutative diagram
\begin{equation}
\label{int.1.1}
\xymatrix{
\mc{B}
\ar[d]_-{b}
\ar[r]_-{f}
&
\mc{A}
\ar[d]^-{a}
&
\mc{C}
\ar[l]^-{g}
\ar[d]^-{c}
\\
\mc{B}'
\ar[r]^-{f'}
&
\mc{A}'
&
\mc{C}'
\ar[l]_-{g'}
}
\end{equation}
in $\qcat$.
If $a$, $b$ and $c$ are fully faithful, then so is the induced functor $\phi: \mc{B} \times_{\mc{A}} \mc{C} \to \mc{B}' \times_{\mc{A}'} \mc{C}'$.
\end{lemma}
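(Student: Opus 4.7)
The plan is to reduce the claim to the standard description of mapping spaces in a fiber product of qcategories and then to apply the fully faithfulness of $a$, $b$, $c$ factorwise. Recall that $\phi$ is fully faithful if and only if, for each pair of objects $X, Y$ of $\mc{B} \times_{\mc{A}} \mc{C}$, the induced map $\map{X}{Y}_{\mc{B} \times_{\mc{A}} \mc{C}} \to \map{\phi(X)}{\phi(Y)}_{\mc{B}' \times_{\mc{A}'} \mc{C}'}$ is an equivalence in $\spc{}$.

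First, I would fix such objects $X = (X_B, X_C, \gamma_X)$ and $Y = (Y_B, Y_C, \gamma_Y)$, where $\gamma_X: f(X_B) \isom g(X_C)$ and $\gamma_Y: f(Y_B) \isom g(Y_C)$ are the equivalences in $\mc{A}$ which are part of the data. Since the corepresentable functor $\map{X}{-}: \qcat \to \spc{}$ is continuous and $\mc{B} \times_{\mc{A}} \mc{C}$ is defined as a pullback in $\qcat$, there is a natural equivalence
$$\map{X}{Y}_{\mc{B} \times_{\mc{A}} \mc{C}} \simeq \map{X_B}{Y_B}_{\mc{B}} \times_{\map{f(X_B)}{g(Y_C)}_{\mc{A}}} \map{X_C}{Y_C}_{\mc{C}}$$
in $\spc{}$, in which the structure maps to $\map{f(X_B)}{g(Y_C)}_{\mc{A}}$ use $\gamma_X$ and $\gamma_Y$ to identify the endpoints appropriately.

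Second, the commutativity of the diagram \eqref{int.1.1} implies that $\phi$ respects this description: the map on mapping spaces induced by $\phi$ fits into a map of homotopy pullbacks in $\spc{}$ whose three factorwise maps are those induced by $b$, $a$, and $c$, respectively. By hypothesis, $a$, $b$, $c$ are fully faithful, hence each of these factorwise maps is an equivalence in $\spc{}$. Since the formation of fiber products in $\spc{}$ preserves equivalences, the induced map between the pullback spaces is an equivalence, whence $\phi$ is fully faithful.

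The main obstacle, insofar as there is one, is the bookkeeping of the coherence data $\gamma_X$, $\gamma_Y$ and the verification that the limit formula for mapping spaces in a pullback of qcategories really has the expected form, with the correct identifications. This is however a formal consequence of the fact that corepresentable functors preserve limits, so no new ideas are required beyond a careful unwinding of definitions.
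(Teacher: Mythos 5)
Your proof is correct and follows essentially the same route as the paper: describe objects of the fiber product as triples with a compatibility equivalence, identify the mapping space as a fiber product of mapping spaces over the one in $\mc{A}$ (the paper quotes \cite[3.3.3.2]{Lurie_higher-topos} and the analogous mapping-space formula), and conclude because the induced map is a fiber product of three equivalences in $\spc{}$. The only cosmetic difference is your appeal to corepresentability to justify the mapping-space formula, which the paper simply asserts; both amount to the standard fact that mapping spaces in a limit of qcategories are the corresponding limit of mapping spaces.
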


\begin{proof}
By \cite[3.3.3.2]{Lurie_higher-topos}, an object of the fiber product $\mc{B} \times_{\mc{A}} \mc{C}$ in $\qcat$ is determined by objects $A \in \mc{A}$, $B \in \mc{B}$, $C \in \mc{C}$ and equivalences $fB \simeq A \simeq gC$.
Moreover, if $D$ and $D'$ are objects of $\mc{B} \times_{\mc{A}} \mc{C}$ corresponding to such $\prns{A, B, C}$ and $\prns{A', B', C'}$, respectively, then we have
\begin{equation}
\label{int.1.2}
\map{D}{D'}_{\mc{B} \times_{\mc{A}}\mc{C}}
\simeq
\map{B}{B'}_{\mc{B}} \times_{\map{A}{A'}_{\mc{A}}} \map{C}{C'}_{\mc{C}}.
\end{equation}
This also applies to $\mc{B}' \times_{\mc{A}'} \mc{C}'$.
If $a$, $b$ and $c$ are fully faithful, then the induced morphism
\[
\map{B}{B'}_{\mc{B}} \times_{\map{A}{A'}_{\mc{A}}} \map{C}{C'}_{\mc{C}}
\to
\map{bB}{bB'}_{\mc{B}'} \times_{\map{aA}{aA'}_{\mc{A}'}} \map{cC}{cC'}_{\mc{C}'}
\]
is the fiber product of three equivalences in $\spc{}$, hence an equivalence itself.
\end{proof}

\begin{prop}
\label{int.2}
If $f: \mc{B} \to \mc{A}$ and $g: \mc{C} \to \mc{A}$ are t-exact functors between $\mf{U}$-small stable qcategories equipped with t-structures, then $\mc{D} := \mc{B} \times_{\mc{A}} \mc{C}$ admits a t-structure with respect to which the canonical functors $g': \mc{D} \to \mc{B}$ and $f': \mc{D} \to \mc{C}$ are t-exact.
\end{prop}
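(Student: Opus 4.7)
The plan is as follows. First, observe that the fiber product $\mc{D} := \mc{B} \times_{\mc{A}} \mc{C}$ is itself stable: it is a limit in $\qcat$ of stable qcategories along exact functors, and limits of stable qcategories along exact functors are stable, with limits and shifts computed componentwise. The projections $f': \mc{D} \to \mc{C}$ and $g': \mc{D} \to \mc{B}$ are therefore exact.

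Next, I would define the t-structure on $\mc{D}$ in the obvious way, by setting
\[
\mc{D}^{\leq 0} := g'^{-1}\prns{\mc{B}^{\leq 0}} \cap f'^{-1}\prns{\mc{C}^{\leq 0}},
\qquad
\mc{D}^{\geq 0} := g'^{-1}\prns{\mc{B}^{\geq 0}} \cap f'^{-1}\prns{\mc{C}^{\geq 0}},
\]
so that $f'$ and $g'$ are tautologically t-exact provided these subqcategories actually form a t-structure. Closure under the appropriate shifts is immediate from the corresponding property in $\mc{B}$ and $\mc{C}$. For orthogonality, if $X \in \mc{D}^{\leq 0}$ and $Y \in \mc{D}^{\geq 1}$, then using the description of mapping spaces in a fiber product (cf.\ the proof of \ref{int.1}, equation \eqref{int.1.2}),
\[
\map{X}{Y}_{\mc{D}}
\simeq
\map{g'X}{g'Y}_{\mc{B}} \times_{\map{fg'X}{fg'Y}_{\mc{A}}} \map{f'X}{f'Y}_{\mc{C}},
\]
and the two outer mapping spaces are contractible by orthogonality in $\mc{B}$ and $\mc{C}$, whence so is $\map{X}{Y}_{\mc{D}}$.

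The key step is the construction of the truncation triangle. Given $D \in \mc{D}$, which by \cite[3.3.3.2]{Lurie_higher-topos} is determined by a triple $\prns{B, C, \alpha}$ with $B \in \mc{B}$, $C \in \mc{C}$ and $\alpha: fB \isom gC$ an equivalence in $\mc{A}$, t-exactness of $f$ and $g$ gives a canonical chain of equivalences
\[
f\trun^{\leq 0} B
\simeq
\trun^{\leq 0} fB
\xrightarrow{\trun^{\leq 0}\alpha}
\trun^{\leq 0} gC
\simeq
g \trun^{\leq 0} C,
\]
and likewise for $\trun^{\geq 1}$, yielding objects $\trun^{\leq 0} D$ and $\trun^{\geq 1} D$ of $\mc{D}$ whose projections to $\mc{B}$ and $\mc{C}$ are the respective truncations. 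By functoriality of the truncation triangles in $\mc{B}$ and $\mc{C}$, we obtain a compatible pair of fiber sequences $\trun^{\leq 0} B \to B \to \trun^{\geq 1} B$ and $\trun^{\leq 0} C \to C \to \trun^{\geq 1} C$ which assemble, via the universal property of the fiber product (and the fact that limits in $\mc{D}$ are computed componentwise), into a fiber sequence $\trun^{\leq 0} D \to D \to \trun^{\geq 1} D$ in $\mc{D}$.

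The main obstacle is purely a matter of coherence: verifying that the assembly of the two componentwise truncation triangles into a triangle in the fiber product is genuinely functorial and not merely pointwise. This is handled by noting that the truncation functors $\trun^{\leq 0}: \mc{B} \to \mc{B}^{\leq 0}$ and $\trun^{\leq 0}: \mc{C} \to \mc{C}^{\leq 0}$ are functors of qcategories, the t-exactness equivalences $f \trun^{\leq 0} \simeq \trun^{\leq 0} f$ and $g \trun^{\leq 0} \simeq \trun^{\leq 0} g$ are natural, and so the above data determines a functor $\mc{D} \to \mc{D}^{\leq 0}$ by the universal property of the fiber product; the same applies to $\trun^{\geq 1}$ and to the connecting triangle. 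Once this is in place, t-exactness of $f'$ and $g'$ holds by construction.
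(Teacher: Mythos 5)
Your overall strategy coincides with the paper's: define $\mc{D}^{\leq 0}$ and $\mc{D}^{\geq 0}$ componentwise, note closure under the shifts, prove orthogonality via the description \eqref{int.1.2} of mapping spaces in a fiber product, and produce the truncation triangle through the universal property of $\mc{D}$. However, the orthogonality step as you wrote it has a genuine gap: from contractibility of the two outer factors $\map{g'X}{g'Y}_{\mc{B}}$ and $\map{f'X}{f'Y}_{\mc{C}}$ you cannot conclude that the homotopy fiber product is contractible, nor even that its $\pi_0$ vanishes. The homotopy pullback of two contractible spaces over a base $Z$ is a path space of $Z$, and its $\pi_0$ (at the zero maps) is $\pi_1\prns{Z}$. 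Tellingly, your argument never uses the t-structure on $\mc{A}$ or the t-exactness of $f$ and $g$ at this point, and without them the claim is false. The missing ingredient is exactly the vanishing $\pi_1\map{fg'X}{fg'Y}_{\mc{A}} \simeq \pi_0\map{\Sigma fg'X}{fg'Y}_{\mc{A}} = 0$, which holds because $\Sigma fg'X \in \mc{A}^{\leq 0}$ and $fg'Y \in \mc{A}^{> 0}$ by t-exactness; the paper extracts precisely this from the exact sequence \eqref{int.2.1}, and note that only $\pi_0\map{X}{Y}_{\mc{D}} = 0$ is required for the t-structure axiom, not contractibility. Once this term is killed, your argument goes through.

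For the truncation triangle, your construction is essentially the paper's, but you take on more than is needed. Rather than building both truncations and then assembling two componentwise fiber sequences into one (which is what forces you into the coherence questions about natural equivalences $f\trun^{\leq 0} \simeq \trun^{\leq 0} f$ and functorial truncations on $\mc{D}$), it suffices, for each object $X$, to produce the single morphism $X \to X^{>0}$ in $\mc{D}$: take the edges $\Delta^1 \to \mc{B}$ and $\Delta^1 \to \mc{C}$ given by $g'X \to \trun^{>0}g'X$ and $f'X \to \trun^{>0}f'X$, observe that t-exactness of $f$ and $g$ supplies a homotopy between their images in $\fun{\Delta^1}{\mc{A}}$, and invoke the universal property of the fiber product to get the edge in $\mc{D}$. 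Taking its fiber inside $\mc{D}$ and using exactness of $f'$ and $g'$ identifies the projections of that fiber with $\trun^{\leq 0}g'X$ and $\trun^{\leq 0}f'X$, so it lies in $\mc{D}^{\leq 0}$. No functorial truncation on $\mc{D}$ is needed to verify the axioms, so the coherence discussion in your last paragraph can simply be dropped.
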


\begin{proof}
By \cite[1.1.4.2]{Lurie_higher-algebra}, $f'$ and $g'$ are morphisms of $\qcat^{\mr{Ex}}$.
Define $\mc{D}^{\leq 0} \subseteq \mc{D}$ to be the full subqcategory spanned by the objects $D$ such that $g'D \in \mc{B}^{\leq0}$ and $f'D \in \mc{C}^{\leq0}$, and define $\mc{D}^{> 0} \subseteq \mc{D}$ analogously.
If these define a t-structure, then $f'$ and $g'$ are necessarily t-exact.
Since $f'$ and $g'$ are exact, the suspension functor $\Sigma$ preserves $\mc{D}^{\leq0}$ and the loop space functor $\Omega$ preserves $\mc{D}^{> 0}$.

We claim that, if $X \in \mc{D}^{\leq0}$ and $Y \in \mc{D}^{>0}$, then $\pi_0\map{X}{Y}_{\mc{D}} = 0$.
As in \eqref{int.1.2}, we have
\[
\map{X}{Y}_{\mc{D}} 
\simeq 
\map{g'X}{g'Y}_{\mc{B}} \times_{\map{fg'X}{fg'Y}_{\mc{A}}} \map{f'X}{f'Y}_{\mc{C}}.
\]
This homotopy fiber product in $\sset$ induces an exact sequence of homotopy groups 
\begin{equation}
\label{int.2.1}
\pi_1\map{fg'X}{fg'Y}_{\mc{A}}
\to
\pi_0\map{X}{Y}_{\mc{D}}
\to
\pi_0\map{g'X}{g'Y}_{\mc{B}} \oplus \pi_0\map{f'X}{f'Y}_{\mc{C}}
\end{equation}
with base points given by the zero morphisms.
The last term vanishes since $g'X \in \mc{B}^{\leq 0}$, $g'Y \in \mc{B}^{>0}$, $f'X \in \mc{C}^{\leq0}$ and $f'Y \in \mc{C}^{>0}$; the first is isomorphic to
\[
\pi_0\Omega\map{fg'X}{fg'Y}_{\mc{A}}
\simeq
\pi_0\map{\Sigma fg'X}{fg'Y}_{\mc{A}}
=
0,
\]
since $\Sigma fg'X \in \mc{A}^{\leq 0}$ and $fg'Y \in \mc{A}^{>0}$.

Let $X \in \mc{D}$.
We claim that there exists a fiber sequence $X^{\leq0} \to X \to X^{>0}$ in $\mc{D}$ with $X^{\leq0} \in \mc{D}^{\leq0}$ and $X^{>0} \in \mc{D}^{>0}$.
Consider the functors $\delta: \Delta^1 \to \mc{B}$ and $\delta': \Delta^1 \to \mc{C}$ corresponding to the canonical morphisms $\eta: g'X \to \trun^{>0}g'X$ and $\eta': f'X \to \trun^{>0}f'X$, respectively.
We have a homotopy $f\delta \simeq g\delta'$ since $f$ and $g$ are t-exact.
By the universal property of the fiber product $\mc{D}$, $\delta$ and $\delta'$ induce a functor $\Delta^1 \to \mc{D}$ corresponding to a morphism $\tilde{\eta}: X \to X^{>0}$.
By construction, $X^{>0} \in \mc{D}^{>0}$.
Since $f'$ and $g'$ are exact and send $\tilde{\eta}$ to $\eta'$ and $\eta$, respectively, they send the fiber $X^{\leq0}$ of $\tilde{\eta}$ to $\trun^{\leq0}f'X$ and $\trun^{\leq0}g'X$, respectively, so $X^{\leq0} \in \mc{D}^{\leq0}$, as required.
\end{proof}

\begin{prop}
\label{int.3}
Consider a commutative diagram \textup{\eqref{int.1.1}} in which $f'$ and $g'$ are t-exact functors between $\mf{U}$-small stable qcategories equipped with t-structures and $a$, $b$ and $c$ are the fully faithful inclusions of the hearts of these t-structures.
If $f$ is an isofibration, then the heart of the t-structure of \textup{\ref{int.2}} on $\mc{B}' \times_{\mc{A}'} \mc{C}'$ is equivalent to the fiber product $\mc{B} \times_{\mc{A}} \mc{C}$, formed in the $1$-category $\cat$ of $\mf{U}$-small categories.
\end{prop}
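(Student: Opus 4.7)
The plan is to combine three ingredients. First, \ref{int.2} identifies the heart of the t-structure on $\mc{D}' := \mc{B}' \times_{\mc{A}'} \mc{C}'$ as the full subqcategory of $\mc{D}'$ spanned by those objects whose projections land in the hearts of $\mc{B}'$ and $\mc{C}'$, i.e., in the essential images of $b$ and $c$. Second, \ref{int.1} applied to the diagram \eqref{int.1.1} supplies a fully faithful functor $\phi: \mc{B} \times_{\mc{A}} \mc{C} \to \mc{D}'$, where the source is the fiber product formed in $\qcat$. Third, the isofibration hypothesis on $f$, combined with the fact recalled in the introduction that $\nerve{}: \cat \to \sset$ is right Quillen for the canonical and Joyal model structures, will let us identify this qcategorical fiber product with its 1-categorical counterpart.

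In the first step of the argument, I would show that $\phi$ restricts to an equivalence onto the heart of $\mc{D}'$. Its image lies in this heart by construction, since the projections factor through $\mc{B}$ and $\mc{C}$. For essential surjectivity, I would use the description of objects in a qcategorical fiber product recalled in the proof of \ref{int.1}: an object $D$ in the heart of $\mc{D}'$ corresponds to a triple $(B', A', C')$ together with equivalences $f' B' \simeq A' \simeq g' C'$, with $B' \in b(\mc{B})$ and $C' \in c(\mc{C})$. Writing $B' \simeq b(B)$ and $C' \simeq c(C)$ and using the commutativity of \eqref{int.1.1} to rewrite these equivalences as $a(fB) \simeq A' \simeq a(gC)$, I would invoke the full faithfulness of $a$ to replace $A'$ by an equivalent $a(A)$ for some $A \in \mc{A}$ and to lift the equivalences back to $\mc{A}$. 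This exhibits $D$ as lying in the essential image of $\phi$.

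In the second step, I would pass from the qcategorical to the 1-categorical fiber product. Since $\mc{A}$, $\mc{B}$, $\mc{C}$ are 1-categories (as hearts of t-structures are abelian) and $f$ is an isofibration, the right Quillen nerve functor sends $f$ to a categorical fibration of simplicial sets. As the nerve preserves strict limits, $\nerve{\mc{B} \times_{\mc{A}} \mc{C}}$, with the pullback taken in $\cat$, coincides with the strict pullback $\nerve{\mc{B}} \times_{\nerve{\mc{A}}} \nerve{\mc{C}}$, and this in turn computes the homotopy pullback in the Joyal model structure, i.e., the fiber product $\mc{B} \times_{\mc{A}} \mc{C}$ in $\qcat$. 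Chaining with the equivalence of the first step would complete the proof.

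I expect the main obstacle to be essentially bookkeeping in the second step: one must carefully track the interaction between the canonical model structure on $\cat$, the Joyal model structure on $\sset$, and the definition of the fiber product in $\qcat$ to see that the isofibration hypothesis is precisely what forces the strict 1-categorical pullback to agree with the qcategorical one. Everything else should be a direct unwinding of \ref{int.1} and \ref{int.2}.
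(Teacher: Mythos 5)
Your proposal is correct and follows essentially the same route as the paper: full faithfulness of the comparison functor via \ref{int.1}, identification of its essential image with the heart of the t-structure of \ref{int.2} by an object-level argument, and use of the isofibration hypothesis together with the right Quillen nerve to identify the $1$-categorical pullback with the fiber product in $\qcat$. The only differences are cosmetic\textemdash you perform the homotopy-pullback identification on the $\sset$/Joyal side (where fibrancy of nerves substitutes for right properness) rather than in $\cat$ with the canonical model structure, and you run essential surjectivity through the triple description of objects of the fiber product rather than by factoring the classifying maps through the hearts.
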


\begin{proof}
Recall that an \emph{isofibration} is a functor of categories $F: \mc{D} \to \mc{D}'$ such that, for each $D \in \mc{D}$ and each isomorphism $\alpha: FD \isom D'$ in $\mc{D}'$, there exists an isomorphism $\beta: D \isom \tilde{D}'$ such that $F\beta = \alpha$.
By \cite[2.8]{Joyal_notes-on-quasi-categories}, the nerve functor $\nerve{}: \cat \to \sset$ is right Quillen with respect to the model structure on $\cat$ whose weak equivalences and fibrations are the equivalences and the isofibrations, respectively, and the Joyal model structure on $\sset$ (\cite[2.2.5.1]{Lurie_higher-topos}).
Each object is fibrant in this model structure on $\cat$, so the model structure is right proper.
In particular, pullbacks along isofibrations are homotopy pullbacks (\cite[1.19]{Barwick_left-and-right}).
Right Quillen functors between $\mf{U}$-combinatorial model categories induce $\mf{U}$-continuous functors between the underlying qcategories (\cite[1.3.4.26]{Lurie_higher-algebra}), so $\nerve{}$ sends the fiber product of $\mc{B}$ and $\mc{C}$ over $\mc{A}$ in $\cat$ to their fiber product in $\qcat$.
By \ref{int.1}, it follows that the induced functor $\mc{B} \times_{\mc{A}} \mc{C} \to \mc{B}' \times_{\mc{A}'} \mc{C}'$ is fully faithful.
Its essential image is contained in $\prns{\mc{B}' \times_{\mc{A}'} \mc{C}'}^{\heartsuit}$, so it remains to show that the essential image contains each $X \in \prns{\mc{B}' \times_{\mc{A}'} \mc{C}'}^{\heartsuit}$.
Such an object $X$ is classified by a functor $\chi: \Delta^0 \to \mc{B}' \times_{\mc{A}'} \mc{C}'$, which is in turn classified by functors $\chi_0: \Delta^0 \to \mc{B}'$ and $\chi_1: \Delta^0 \to \mc{C}'$ and a homotopy $f'\chi_0 \simeq g'\chi_1$. 
By construction of the t-structure on $\mc{B}' \times_{\mc{A}'} \mc{C}'$, $\chi_0$ and $\chi_1$ must factor through $\mc{B} = \mc{B}'^{\heartsuit}$ and $\mc{C} = \mc{C}'^{\heartsuit}$, respectively, so $\chi$ factors through $\mc{B} \times_{\mc{A}} \mc{C}$, as desired.
\end{proof}

\begin{lemma}
\label{int.4}
The natural functor $\mhs^{\mr{p}}_{\bs{\Lambda}} \to \mhs^{\mr{p}}_{\KK} \times_{\mod{\ab}_{\KK}\scr{}{\aleph_0}} \mod{\ab}_{\bs{\Lambda}}\scr{}{\aleph_0}$ in $\qcat$ is an equivalence.
\end{lemma}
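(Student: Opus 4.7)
The plan is to check that the functor is an equivalence of $1$-categories by unwinding definitions, after reducing the fiber product in $\qcat$ to an ordinary fiber product of categories.

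First, I would verify that the forgetful functor $U: \mhs^{\mr{p}}_{\KK} \to \mod{\ab}_{\KK,\aleph_0}$ is an isofibration. Given $H = (H_{\KK}, W, F) \in \mhs^{\mr{p}}_{\KK}$ and an isomorphism $\alpha: H_{\KK} \isom V$ of finite-dimensional $\KK$-vector spaces, one transports $W$ along $\alpha$ and $F$ along $\alpha \otimes_{\KK} \mb{C}$ to obtain a polarizable mixed Hodge $\KK$-structure on $V$ (the filtered graded pieces are transported via isomorphisms, preserving polarizability), and $\alpha$ then lifts to an isomorphism in $\mhs^{\mr{p}}_{\KK}$.

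Applying the same argument about isofibrations used in the proof of \ref{int.3}, the fiber product $\mhs^{\mr{p}}_{\KK} \times_{\mod{\ab}_{\KK,\aleph_0}} \mod{\ab}_{\bs{\Lambda},\aleph_0}$ formed in $\qcat$ is equivalent to the ordinary fiber product formed in the $1$-category $\cat$. An object of this $1$-categorical fiber product is a triple $(H, M, \alpha)$ where $H = (H_{\KK}, W, F) \in \mhs^{\mr{p}}_{\KK}$, $M$ is a finitely generated $\bs{\Lambda}$-module (since $\bs{\Lambda}$ is Noetherian), and $\alpha: M \otimes_{\bs{\Lambda}} \KK \isom H_{\KK}$ is a $\KK$-linear isomorphism. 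A morphism $(H, M, \alpha) \to (H', M', \alpha')$ is a pair consisting of a morphism $H \to H'$ in $\mhs^{\mr{p}}_{\KK}$ and a $\bs{\Lambda}$-linear morphism $M \to M'$ intertwined by $\alpha$ and $\alpha'$.

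The remaining step is to match this description with $\mhs^{\mr{p}}_{\bs{\Lambda}}$: such a triple is exactly the data of a polarizable mixed Hodge $\bs{\Lambda}$-structure on $M$, since the weight filtration on $M \otimes_{\bs{\Lambda}} \KK$ and the Hodge filtration on $M \otimes_{\bs{\Lambda}} \mb{C}$ can be transported from $W$ and $F$ via $\alpha$ and $\alpha \otimes_{\KK} \mb{C}$ respectively, and the polarizability condition is unchanged. Morphisms match on the nose by the definition of a morphism of mixed Hodge $\bs{\Lambda}$-structures. This exhibits the natural functor as an isomorphism of categories, hence an equivalence. Since the entire proof amounts to definition chasing, there is no serious obstacle; the only point requiring a small amount of care is establishing the isofibration property so as to reduce to the $1$-categorical fiber product.
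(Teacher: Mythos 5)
Your proposal is correct and follows essentially the same route as the paper: verify that the forgetful functor $\mhs^{\mr{p}}_{\KK} \to \mod{\ab}_{\KK}\scr{}{\aleph_0}$ is an isofibration by transport of structure, invoke the argument of \ref{int.3} to replace the fiber product in $\qcat$ by the $1$-categorical one, and identify the latter with $\mhs^{\mr{p}}_{\bs{\Lambda}}$ by unwinding definitions. The only cosmetic difference is that you describe the $1$-categorical fiber product via triples with a comparison isomorphism (a pseudo-pullback) rather than the strict pullback, which the paper identifies with $\mhs^{\mr{p}}_{\bs{\Lambda}}$ on the nose; since the two are equivalent once one leg is an isofibration, this changes nothing.
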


\begin{proof}
Following the discussion in the proof of \ref{int.3}, it suffices to show that we have the corresponding equivalence in $\cat$ and that the fiber functor $\omega: \mhs^{\mr{p}}_{\KK} \to \mod{\ab}_{\KK}\scr{}{\aleph_0}$ sending polarizable mixed Hodge $\KK$-structure to its underlying $\KK$-module is an isofibration.
The former is true by definition of $\mhs^{\mr{p}}_{\bs{\Lambda}}$: its objects are given by pairs $\prns{M, V}$ such that $M$ is a $\bs{\Lambda}$-module of finite type and $V$ is an object of $\mhs^{\mr{p}}_{\KK}$ whose underlying $\KK$-module is $M \otimes_{\bs{\Lambda}}\KK$, and similarly for morphisms.
Also, we can push polarizable mixed Hodge $\KK$-structures forward along isomorphisms of $\KK$-modules to isomorphic mixed Hodge $\KK$-structures, so $\omega$ is an isofibration.
\end{proof}

\begin{thm}
\label{int.5}
There are natural equivalences
\begin{enumerate}
\item
$\phi: \D{\mhs^{\mr{p}}_{\bs{\Lambda}}}^{\mr{b}} \isom \tD{\mhs^{\mr{p}}_{\bs{\Lambda}}}^{\mr{b}} :=  \D{\mhs^{\mr{p}}_{\KK}}^{\mr{b}} \times_{\D{\mod{\ab}_{\KK}\scr{}{\aleph_0}}^{\mr{b}}} \D{\mod{\ab}_{\bs{\Lambda}}\scr{}{\aleph_0}}^{\mr{b}}$,
\item
$\Phi: \D{\ind{\mhs^{\mr{p}}_{\bs{\Lambda}}}} \isom \D{\ind{\mhs^{\mr{p}}_{\KK}}} \times_{\D{\mod{\ab}_{\KK}}} \D{\mod{\ab}_{\bs{\Lambda}}}$.
\end{enumerate}
\end{thm}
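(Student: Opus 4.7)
The plan is to first establish part $(i)$ via a t-structure comparison, then deduce part $(ii)$ by an Ind-completion and compact-generation argument.

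For part $(i)$: since the forgetful functors $\mhs^{\mr{p}}_{\KK} \to \mod{\ab}_{\KK}\scr{}{\aleph_0}$ and $\mod{\ab}_{\bs{\Lambda}}\scr{}{\aleph_0} \to \mod{\ab}_{\KK}\scr{}{\aleph_0}$ are exact, they induce t-exact functors on bounded derived qcategories, so Proposition~\ref{int.2} equips $\tD{\mhs^{\mr{p}}_{\bs{\Lambda}}}^{\mr{b}}$ with a t-structure whose projections are t-exact. The proof of Lemma~\ref{int.4} shows that $\mhs^{\mr{p}}_{\KK} \to \mod{\ab}_{\KK}\scr{}{\aleph_0}$ is an isofibration, and this lifts termwise to the induced bounded derived functor at the level of $1$-categories. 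Proposition~\ref{int.3} then identifies the heart of $\tD{\mhs^{\mr{p}}_{\bs{\Lambda}}}^{\mr{b}}$ with the $1$-categorical fiber product $\mhs^{\mr{p}}_{\KK} \times_{\mod{\ab}_{\KK}\scr{}{\aleph_0}} \mod{\ab}_{\bs{\Lambda}}\scr{}{\aleph_0}$, which by Lemma~\ref{int.4} is canonically $\mhs^{\mr{p}}_{\bs{\Lambda}}$.

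The universal property of this fiber product, applied to the t-exact forgetful functors out of $\D{\mhs^{\mr{p}}_{\bs{\Lambda}}}^{\mr{b}}$, produces the canonical t-exact functor $\phi$, which by construction restricts on hearts to the equivalence of Lemma~\ref{int.4}. Standard dévissage on cohomological amplitude, applied to a t-exact functor between bounded stable qcategories inducing an equivalence on hearts, reduces the proof that $\phi$ is an equivalence to checking that, for all $X, Y \in \mhs^{\mr{p}}_{\bs{\Lambda}}$ and all $n \geq 0$, the induced map $\mathrm{Ext}^n_{\mhs^{\mr{p}}_{\bs{\Lambda}}}(X, Y) \to \mathrm{Ext}^n_{\tD}(\phi X, \phi Y)$ is an isomorphism. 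On the target, formula~\eqref{int.1.2} for mapping spaces in a fiber product of stable qcategories yields a Mayer--Vietoris long exact sequence with terms in $\mathrm{Ext}^{*}$ over $\mhs^{\mr{p}}_{\KK}$, $\mod{\ab}_{\bs{\Lambda}}\scr{}{\aleph_0}$, and $\mod{\ab}_{\KK}\scr{}{\aleph_0}$. The main obstacle lies in constructing the parallel Mayer--Vietoris sequence for $\mathrm{Ext}^{*}_{\mhs^{\mr{p}}_{\bs{\Lambda}}}$ directly from the fiber-product description of $\mhs^{\mr{p}}_{\bs{\Lambda}}$ (an extension in $\mhs^{\mr{p}}_{\bs{\Lambda}}$ is precisely a pair of compatible extensions in $\mhs^{\mr{p}}_{\KK}$ and $\mod{\ab}_{\bs{\Lambda}}\scr{}{\aleph_0}$ whose rationalizations agree in $\mod{\ab}_{\KK}\scr{}{\aleph_0}$) and then verifying its naturality against the target sequence. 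The small homological dimensions ($\leq 1$ for each of the three constituents) truncate both sequences after a few terms, and the five lemma then delivers the required Ext-isomorphisms.

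For part $(ii)$: since $\mod{\ab}_{\KK}$ and $\mod{\ab}_{\bs{\Lambda}}$ are the Ind-completions of their Noetherian subcategories, Ind-completion commutes with the $1$-categorical fiber product of Lemma~\ref{int.4}, giving an abelian equivalence $\ind{\mhs^{\mr{p}}_{\bs{\Lambda}}} \simeq \ind{\mhs^{\mr{p}}_{\KK}} \times_{\mod{\ab}_{\KK}} \mod{\ab}_{\bs{\Lambda}}$. Repeating the argument of $(i)$ with this equivalence in place of Lemma~\ref{int.4}, applied to the unbounded derived qcategories via Propositions~\ref{int.2} and~\ref{int.3} and the same Mayer--Vietoris comparison, yields a canonical t-exact, fully faithful functor $\Phi$ restricting on hearts to the displayed equivalence. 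For essential surjectivity, both sides are stable, locally $\mf{U}$-presentable qcategories; the finite homological dimension of $\mhs^{\mr{p}}_{\bs{\Lambda}}$ (deducible from $(i)$ together with the bounds on the constituent Tannakian and module categories) ensures that the natural functor $\ind{\D{\mhs^{\mr{p}}_{\bs{\Lambda}}}^{\mr{b}}} \to \D{\ind{\mhs^{\mr{p}}_{\bs{\Lambda}}}}$ is an equivalence, and $\Phi$ restricts to $\phi$ on compact objects by $(i)$, so $\Phi$ matches compact generators on the two sides and is therefore an equivalence by Ind-completion.
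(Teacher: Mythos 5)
Your setup for $(i)$ coincides with the paper's (the t-structure of \ref{int.2} on the fiber product, the identification of its heart via \ref{int.3} and \ref{int.4}, and the dévissage reducing full faithfulness to an $\operatorname{Ext}$-comparison between heart objects), but the decisive step is precisely the one you defer. To run your five-lemma comparison you would need a long exact Mayer--Vietoris sequence for Yoneda $\operatorname{Ext}^{*}$ in the abelian fiber product $\mhs^{\mr{p}}_{\bs{\Lambda}} \simeq \mhs^{\mr{p}}_{\KK} \times_{\mod{\ab}_{\KK}\scr{}{\aleph_0}} \mod{\ab}_{\bs{\Lambda}}\scr{}{\aleph_0}$, valid in all degrees and natural with respect to the map into the sequence obtained from \eqref{int.1.2}. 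Such a sequence is not available for free: gluing compatible extensions handles low degrees, but producing the connecting maps into $\operatorname{Ext}^{2}_{\mhs^{\mr{p}}_{\bs{\Lambda}}}$ and proving exactness there (which is what you need even just to conclude that $\mhs^{\mr{p}}_{\bs{\Lambda}}$ has homological dimension $\leq 1$, a fact you invoke to truncate the sequences) is essentially equivalent in strength to the full faithfulness of $\phi$ itself. So the ``main obstacle'' you name is not a verification to be finished off by the five lemma; it is the content of the theorem, and your proposal does not supply it.

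The paper's proof shows you do not need any Ext computation on the source side. Using the fiber-sequence of mapping spaces \eqref{int.2.1} together with the homological dimension bounds ($\operatorname{hdim}(\mhs^{\mr{p}}_{\KK}) = \operatorname{hdim}(\mod{\ab}_{\bs{\Lambda}}\scr{}{\aleph_0}) = 1$, $\operatorname{hdim}(\mod{\ab}_{\KK}\scr{}{\aleph_0}) = 0$, cf.\ \ref{tannakian.5}$(ii)$), one checks only that heart objects of $\tD{\mhs^{\mr{p}}_{\bs{\Lambda}}}^{\mr{b}}$ admit no nonzero maps in degrees $\geq 2$. For a t-exact functor out of a bounded derived qcategory inducing an equivalence on hearts, the comparison on $\operatorname{Ext}^{0}$ and $\operatorname{Ext}^{1}$ is bijective for formal reasons, and the argument of \cite[2]{Wildeshaus_erratum}, developing the remark of \cite[p.3]{Deligne-Goncharov_groupes-fondamentaux} on \cite[4.2]{Buchsbaum_satellites-and-universal}, shows that vanishing of the target in degrees $\geq 2$ already forces full faithfulness (in particular the source vanishing comes out, rather than going in); essential surjectivity then follows because the t-structures are bounded, so the heart generates. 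For $(ii)$ the paper is also more economical: it writes $\Phi \simeq \psi\,\ind{\phi}$, using \cite[5.3.2.11(3)]{Lurie_higher-algebra} to commute $\ind{}$ with the fiber product and \cite[4.6]{Drew_verdier-quotients} to identify $\ind{\D{\mb{A}}^{\mr{b}}} \simeq \D{\ind{\mb{A}}}$; your closing compact-generation remark is in this spirit, but re-running the heart/Mayer--Vietoris argument at the unbounded level is unnecessary and inherits the same gap from $(i)$.
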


\begin{proof}
The functors in question are those induced by the universal properties of the fiber products defining the codomains.
They are thus exact by \cite[1.1.4.2]{Lurie_higher-algebra}. 
The functor $\phi$ is t-exact with respect to the natural t-structure on its domain and the one on its codomain induced by \ref{int.2} and the natural t-structures on $\D{\mhs^{\mr{p}}_{\KK}}^{\mr{b}}$, $\D{\mod{\ab}_{\KK}\scr{}{\aleph_0}}^{\mr{b}}$ and $\D{\mod{\ab}_{\bs{\Lambda}}\scr{}{\aleph_0}}^{\mr{b}}$.
By \ref{int.3} and \ref{int.4}, $\phi$ induces an equivalence between the hearts of these t-structures.
As $\bs{\Lambda}$ is of global dimension $\leq 1$ (\ref{int.0}), $\on{hdim}\prns{\mod{\ab}_{\bs{\Lambda}}\scr{}{\aleph_0}} = 1$ (\ref{tannakian.5}$(ii)$); $\on{hdim}\prns{\mhs^{\mr{p}}_{\KK}} = 1$ by \cite[3.35]{Peters-Steenbrink_mixed-hodge}; and $\on{hdim}\prns{\mod{\ab}_{\KK}\scr{}{\aleph_0}} = 0$.
Using the exact sequence of \eqref{int.2.1}, these homological-dimension computations imply that 
\[
\pi_0\map{M}{N \brk{r}}_{\tD{\mhs^{\mr{p}}_{\bs{\Lambda}}}^{\mr{b}}} 
= 
0
\]
for each $r \in \mb{Z}_{\geq 2}$ and each pair of objects $M$ and $N$ of $\tD{\mhs^{\mr{p}}_{\bs{\Lambda}}}^{\mr{b}}^{\heartsuit}$.
By the argument of \cite[2]{Wildeshaus_erratum}, which develops a remark of \cite[p.3]{Deligne-Goncharov_groupes-fondamentaux} on the proof of \cite[4.2]{Buchsbaum_satellites-and-universal}, $\phi$ is fully faithful.
Since the t-structures on $\D{\mhs^{\mr{p}}_{\KK}}^{\mr{b}}$, $\D{\mod{\ab}_{\bs{\Lambda}}\scr{}{\aleph_0}}^{\mr{b}}$ and $\D{\mod{\ab}_{\KK}\scr{}{\aleph_0}}^{\mr{b}}$, and hence also $\tD{\mhs^{\mr{p}}_{\bs{\Lambda}}}^{\mr{b}}$, are all bounded, the smallest stable subqcategory of $\tD{\mhs^{\mr{p}}_{\bs{\Lambda}}}^{\mr{b}}$ containing the heart is $\tD{\mhs^{\mr{p}}_{\bs{\Lambda}}}^{\mr{b}}$ itself, and it follows that $\phi$ is essentially surjective.

By \cite[5.3.2.11(3)]{Lurie_higher-algebra}, the canonical functor 
\[
\xymatrix{
\ind[1]{\D{\mhs^{\mr{p}}_{\KK}}^{\mr{b}} \times_{\D{\mod{\ab}_{\KK}\scr{}{\aleph_0}}^{\mr{b}}} \D{\mod{\ab}_{\bs{\Lambda}}\scr{}{\aleph_0}}^{\mr{b}}}
\ar[d]^{\psi}
\\
\ind{\D{\mhs^{\mr{p}}_{\KK}}^{\mr{b}}} \times_{\ind{\D{\mod{\ab}_{\KK}\scr{}{\aleph_0}}^{\mr{b}}}} \ind{\D{\mod{\ab}_{\bs{\Lambda}}\scr{}{\aleph_0}}^{\mr{b}}}
}
\]
is an equivalence.
By \cite[4.6]{Drew_verdier-quotients}, there is a canonical equivalence $\ind{\D{\mb{A}}^{\mr{b}}} \simeq \D{\ind{\mb{A}}}$ for each $\mf{U}$-small Noetherian Abelian category $\mb{A}$ such that $\on{hdim}\prns{\mb{A}} < \infty$.
Under these canonical equivalences, $\Phi$ is equivalent to $\psi \ind{\phi}$.
\end{proof}

\begin{lemma}
\label{int.6}
If $\mc{C}^{\otimes}$ is a $\mf{U}$-small stable symmetric monoidal qcategory such that each $V \in \mc{C}$ is $\otimes$-dualizable, then there is a symmetric monoidal equivalence $\mc{C}^{\mr{op},\otimes} \isom \mc{C}^{\otimes}$, where $\mc{C}^{\mr{op},\otimes}$ is the symmetric monoidal structure of \textup{\cite[2.4.2.7]{Lurie_higher-algebra}}.
\end{lemma}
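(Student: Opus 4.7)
The plan is to construct the equivalence as the dualization functor $D^{\otimes}\colon \mc{C}^{\mr{op},\otimes} \to \mc{C}^{\otimes}$, $V \mapsto V^{\vee}$. Since every object of $\mc{C}$ is $\otimes$-dualizable, $\mc{C}^{\otimes}$ is closed and the internal hom $\uhom{V}{W} \simeq V^{\vee} \otimes W$ exists in $\mc{C}$ (\cite[4.6.1.11]{Lurie_higher-algebra}). In particular, the assignment $V \mapsto V^{\vee} := \uhom{V}{\1{\mc{C}}}$ assembles into a functor $D\colon \mc{C}^{\mr{op}} \to \mc{C}$, and the canonical morphism $V \to V^{\vee\vee}$ is an equivalence for each dualizable $V$ (\cite[4.6.1.10]{Lurie_higher-algebra}), whence $D \circ D^{\mr{op}} \simeq \id_{\mc{C}}$ and $D$ is already an equivalence of the underlying qcategories.

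Next, I would promote $D$ to a symmetric monoidal functor $D^{\otimes}$. The object-wise content is the standard formula $\prns{V \otimes W}^{\vee} \simeq V^{\vee} \otimes W^{\vee}$ together with $\1{\mc{C}}^{\vee} \simeq \1{\mc{C}}$, both of which hold because the tensor product of duality data is a duality datum for the tensor product. At the operadic level, the opposite symmetric monoidal structure of \cite[2.4.2.7]{Lurie_higher-algebra} carries the same tensor operation as $\mc{C}^{\otimes}$ with the direction of $1$-morphisms reversed, so constructing $D^{\otimes}$ as a map of coCartesian fibrations over $\fin$ amounts to specifying, coherently in $V \in \mc{C}$, a duality datum $\prns{\1{\mc{C}} \to V \otimes V^{\vee},\ V^{\vee} \otimes V \to \1{\mc{C}}}$ and using these data to define the action of $D^{\otimes}$ on higher simplices. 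The space of duality data on a fixed dualizable object is contractible (\cite[4.6.1.10]{Lurie_higher-algebra}), which supplies the needed higher coherences.

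The main obstacle is making this coherent system of duality data rigorous at the qcategorical level rather than merely at the level of the homotopy $2$-category, where the preceding argument is nearly a one-line observation. Once $D^{\otimes}$ has been constructed as an honest symmetric monoidal functor, the lemma follows from \cite[2.1.3.8]{Lurie_higher-algebra}: a symmetric monoidal functor between symmetric monoidal qcategories is an equivalence if and only if its underlying functor is, and we have already verified this for $D$.
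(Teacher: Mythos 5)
You take the same route as the paper—build the duality functor $(-)^{\vee}$, argue it is symmetric monoidal because every object is dualizable, and conclude with \cite[2.1.3.8]{Lurie_higher-algebra}—but the step you yourself flag as ``the main obstacle'' is precisely the content of the lemma, and your proposal does not actually resolve it. Saying that the space of duality data on a fixed dualizable object is contractible and that this ``supplies the needed higher coherences'' is a heuristic, not a construction: contractibility of the fibers does not by itself produce a map of coCartesian fibrations over $\operatorname{\mathcal{F}in}_*$, nor the coherent comparison maps $(V\otimes W)^{\vee}\simeq V^{\vee}\otimes W^{\vee}$ as part of a morphism of $\infty$-operads. The paper closes exactly this gap by invoking Lurie's pairing machinery: applying \cite[5.2.5.27]{Lurie_higher-algebra} (in the limit $k=\infty$) yields a pairing of symmetric monoidal qcategories $\operatorname{\mathcal{D}ual}(\mc{C})^{\otimes}\to\mc{C}^{\otimes}\times_{\operatorname{\mathcal{F}in}_*}\mc{C}^{\otimes^{\mathrm{rev}}}$, and \cite[5.2.2.25]{Lurie_higher-algebra} then produces the duality functor as a morphism of $\infty$-operads $\mc{C}^{\mathrm{op},\otimes}\to\mc{C}^{\otimes^{\mathrm{rev}}}$, i.e.\ a \emph{lax} symmetric monoidal functor for free; only then does dualizability of every object enter, to show the lax structure maps $V^{\vee}\otimes^{\mathrm{rev}}W^{\vee}\to(W\otimes V)^{\vee}$ are equivalences, so the functor is strongly symmetric monoidal. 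If you want an honest proof you need either this mechanism or some equivalent device; without it your argument lives only in the homotopy $2$-category, as you concede.

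Two smaller points. First, the natural target of the duality functor is the reverse symmetric monoidal structure $\mc{C}^{\otimes^{\mathrm{rev}}}$, not $\mc{C}^{\otimes}$ itself; the paper disposes of this by noting that the reversal involution on $\mb{E}_{\infty}^{\otimes}$ of \cite[5.2.5.25]{Lurie_higher-algebra} is homotopic to the identity (since $\mb{E}_{\infty}^{\otimes}\simeq\operatorname{\mathcal{F}in}_*$ is final among $\infty$-operads), giving $\mc{C}^{\otimes^{\mathrm{rev}}}\simeq\mc{C}^{\otimes}$, and your write-up should account for this step rather than silently identify the two. Second, your construction of the underlying functor $D$ via internal homs is fine (dualizability of all objects makes the monoidal structure closed, and biduality gives that $D$ is an equivalence), and your final appeal to \cite[2.1.3.8]{Lurie_higher-algebra} matches the paper; so once the operadic construction of $(-)^{\vee,\otimes}$ is supplied, the rest of your argument goes through.
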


\begin{proof}
Let $\mc{C}^{\otimes^{\mr{rev}}}$ denote the ``reverse'' symmetric monoidal structure on $\mc{C}$ given informally by $V \otimes^{\mr{rev}} W := W \otimes V$.
Specifically, we define $\mc{C}^{\otimes^{\mr{rev}}}$ as the pullback of the coCartesian fibration $\mc{C}^{\otimes} \to \mb{E}^{\otimes}_{\infty}$ along the reversal involution $\mb{E}^{\otimes}_{\infty} \to \mb{E}^{\otimes}_{\infty}$ of \cite[5.2.5.25]{Lurie_higher-algebra}. 
This reversal involution is homotopic to the identity, since $\mb{E}^{\otimes}_{\infty} \simeq \fin$ is the final object of the qcategory of \inftyone-operads, from which we deduce that $\mc{C}^{\otimes^{\mr{rev}}}$ is equivalent to $\mc{C}^{\otimes}$.

Applying \cite[5.2.5.27]{Lurie_higher-algebra} in the case $k = \infty$, i.e., taking the limit of the functors
$\alg{\qcat^{\times}}_{\mb{E}_k} \to \alg{\on{\mc{CP}air}}_{\mb{E}_k}$ constructed therein as $k$ approaches $\infty$, we obtain a pairing of symmetric monoidal qcategories (\cite[5.2.2.21]{Lurie_higher-algebra}) $\on{\mc{D}ual}\prns{\mc{C}}^{\otimes} \to \mc{C}^{\otimes} \times_{\fin} \mc{C}^{\otimes^{\mr{rev}}}$.
By \cite[5.2.2.25]{Lurie_higher-algebra}, this pairing induces a morphism of \inftyone-operads $\prns{-}^{\vee,\otimes}: \mc{C}^{\mr{op},\otimes} \to \mc{C}^{\otimes^{\mr{rev}}}$, which we refer to as the duality functor.
The functor $\prns{-}^{\vee}: \mc{C}\op \to \mc{C}$ underlying $\prns{-}^{\vee,\otimes}$ is given informally by $V \mapsto V^{\vee} := \umor{V}{\1{\mc{C}}}_{\mc{C}}$, where $\umor{-}{-}_{\mc{C}}$ is the internal $\hom{}{}$-object bifunctor.
Since each object of $\mc{C}$ is $\otimes$-dualizable by hypothesis, $\prns{-}^{\vee,\otimes}$ is in fact a symmetric monoidal functor:
the natural morphism $V^{\vee} \otimes^{\mr{rev}} W^{\vee} \to \prns{W \otimes V}^{\vee}$ is an equivalence. 

The functor underlying $\prns{-}^{\vee,\otimes}$ is an equivalence.
Indeed, essential surjectivity follows from the observation that $V \simeq \prns{V^{\vee}}\scr{\vee}{}$ for each $V \in \mc{C}$ and full faithfulness from the fact that 
\[
\map{V}{W}_{\mc{C}\op}
\to
\map{V^{\vee}}{W^{\vee}}_{\mc{C}}
\simeq
\map{W}{V}_{\mc{C}}
\]
is an equivalence for each $\prns{V,W} \in \mc{C}^2$.
By \cite[2.1.3.8]{Lurie_higher-algebra}, as the underlying functor $\prns{-}^{\vee}$ is an equivalence, so is $\prns{-}^{\vee,\otimes}$.
Composing $\prns{-}^{\vee,\otimes}$ with the above equivalence $\mc{C}^{\otimes^{\mr{rev}}} \simeq \mc{C}^{\otimes}$ completes the proof.
\end{proof}

\begin{thm}
[{\cite[\S1]{Ayoub_operations-de-Grothendieck}}]
\label{int.7}
There is a symmetric monoidal functor $\Gamma^{\otimes}_{\mr{Betti}}: \prns{\sm[ft]{\kk}}\scr{\mr{op}, \amalg}{} \to \D{\mod{\ab}_{\bs{\Lambda}}}\scr{\otimes}{}$ such that the induced functor $\tilde{\Gamma}_{\mr{Betti}}: \prns{\sm[ft]{\kk}}\op \to \calg{\D{\mod{\ab}_{\bs{\Lambda}}}\scr{\otimes}{}}$ \textup{(\cite[2.4.3.18]{Lurie_higher-algebra})} sends each $X \in \sm[ft]{\kk}$ to an $\mb{E}_{\infty}$-algebra $\tilde{\Gamma}_{\mr{Betti}}\prns{X}$ naturally equivalent to the $\bs{\Lambda}$-linear singular cochain complex of $X\prns{\mb{C}}^{\mr{an}}$ equipped with the $\mb{E}_{\infty}$-algebra structure corresponding to the cup product.
\end{thm}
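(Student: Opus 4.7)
The plan is to realize $\Gamma^{\otimes}_{\mr{Betti}}$ as a composite of two symmetric monoidal functors, following the strategy sketched in \cite[\S 1]{Ayoub_operations-de-Grothendieck}.

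First, analytification. The assignment $X \mapsto X\prns{\mb{C}}^{\mr{an}}$ extends to a strictly product-preserving functor $\sm[ft]{\kk} \to \on{Top}$, since both base change along $\kk \hookrightarrow \mb{C}$ and the GAGA analytification preserve fiber products. Composing with the singular-simplex functor $\on{Sing}: \on{Top} \to \sset$---a product-preserving right adjoint---and inverting weak homotopy equivalences yields a symmetric monoidal functor $\mr{an}\scr{\amalg}{}: \prns{\sm[ft]{\kk}}\scr{\amalg}{} \to \spc{}\scr{\amalg}{}$ with respect to the coCartesian monoidal structures; passing to opposites gives $\mr{an}\scr{\mr{op}, \amalg}{}: \prns{\sm[ft]{\kk}}\scr{\mr{op}, \amalg}{} \to \spc{}\scr{\mr{op}, \amalg}{}$.

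Second, via the Schwede--Shipley equivalence $\D{\mod{\ab}_{\bs{\Lambda}}}\scr{\otimes}{} \simeq \mod{\spt{}}_{H\bs{\Lambda}}\scr{\otimes}{}$ \textup{(\cite[7.1.2.13]{Lurie_higher-algebra})}, the functor $T \mapsto \map{\Sigma^{\infty}_+ T}{H\bs{\Lambda}}_{\spt{}}$ defines a lax symmetric monoidal functor from $\spc{}\scr{\mr{op}, \amalg}{}$ to $\mod{\spt{}}_{H\bs{\Lambda}}\scr{\otimes}{}$, computing the $\bs{\Lambda}$-linear singular cochain complex with its canonical $\mb{E}_{\infty}$-algebra structure; the $\mb{E}_{\infty}$-multiplication arises from the commutative algebra structure on $H\bs{\Lambda}$ and the symmetric monoidality of $\Sigma^{\infty}_+$. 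For each $X \in \sm[ft]{\kk}$, the space $X\prns{\mb{C}}^{\mr{an}}$ has the homotopy type of a finite CW complex (via Hironaka's resolution of singularities together with standard triangulation theorems for real-analytic varieties), so its cochain complex is a perfect $\bs{\Lambda}$-module. The lax structure morphism---concretely, the K\"unneth map---is therefore an equivalence on the essential image of $\mr{an}$, and the cochain functor restricts to a strictly symmetric monoidal functor on that subqcategory.

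Composing the two yields the desired $\Gamma^{\otimes}_{\mr{Betti}}$. By \cite[2.4.3.18]{Lurie_higher-algebra}, the associated functor $\tilde{\Gamma}_{\mr{Betti}}$ automatically lands in $\calg{\D{\mod{\ab}_{\bs{\Lambda}}}\scr{\otimes}{}}$, with each value $\tilde{\Gamma}_{\mr{Betti}}\prns{X}$ equivalent to the singular cochain $\mb{E}_{\infty}$-algebra on $X\prns{\mb{C}}^{\mr{an}}$. The main obstacle is the $\infty$-categorical construction of the lax symmetric monoidal cochain functor from $\spc{}\scr{\mr{op}, \amalg}{}$ to $\mod{\spt{}}_{H\bs{\Lambda}}\scr{\otimes}{}$ and the identification of its underlying multiplication with the cup product; both are standard higher-algebra material, the first being formal from the symmetric monoidality of $\Sigma^{\infty}_+$ and the commutative algebra structure on $H\bs{\Lambda}$, and the second following from Mandell's essential uniqueness of $\mb{E}_{\infty}$-enhancements of the associative dga of singular cochains.
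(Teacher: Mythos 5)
Your construction is correct in outline, but it takes a genuinely different route from the paper. The paper never touches singular cochains directly: it feeds the Cartesian symmetric monoidal functor $\Sigma^{\infty}_{\mb{T}}: \prns{\sm[ft]{\kk}}^{\times} \to \DA{\kk, \bs{\Lambda}}\scr{\otimes}{}$ into the motivic category, invokes Riou's Spanier--Whitehead duality (plus resolution of singularities) to identify the $\aleph_0$-presentable objects of $\DA{\kk, \bs{\Lambda}}$ with the $\otimes$-dualizable ones, uses the duality equivalence $\mc{C}^{\mr{op},\otimes} \simeq \mc{C}^{\otimes}$ of \ref{int.6} to convert the opposite of $\Sigma^{\infty}_{\mb{T}}$ into a symmetric monoidal functor out of $\prns{\sm[ft]{\kk}}\scr{\mr{op},\amalg}{}$, and then composes with Ayoub's cocontinuous symmetric monoidal Betti realization $\varrho^{*,\otimes}$, identifying $\Gamma_{\mr{Betti}}\prns{X} \simeq f_*f^*\bs{\Lambda}_{X\prns{\mb{C}}^{\mr{an}}}$ with the cochain $\mb{E}_{\infty}$-algebra at the end. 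You instead compose analytification with the $\mb{E}_{\infty}$-cochain functor $T \mapsto \map{\Sigma^{\infty}_{+}T}{\mr{H}\bs{\Lambda}}_{\spt{}}$, and the lax-to-strong upgrade is done by K\"unneth plus finiteness of the homotopy type of $X\prns{\mb{C}}^{\mr{an}}$; note that both proofs hinge on the same kind of move at the critical point, namely a dualizability/finiteness argument turning a construction on opposites into a strongly symmetric monoidal one, with your perfectness of singular chains playing the role of Riou's duality. What your route buys is economy and transparency: no $\DA{}$, no six-functor compatibilities for $\varrho^{*}$, and the identification with singular cochains (with cup product) is essentially definitional rather than an unwinding at the end. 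What the paper's detour buys is that $\Gamma^{\otimes}_{\mr{Betti}}$ arrives already factored through a cocontinuous symmetric monoidal realization of $\DA{\kk,\bs{\Lambda}}$, which is what the surrounding program (motivic $\mb{E}_{\infty}$-ring spectra, Hodge realization via Robalo's universal property) needs, although the statement of \ref{int.7} and its use in \ref{int.9} do not require it. Two small repairs to your write-up: product preservation gives symmetric monoidality for the \emph{Cartesian} structures $\prns{\sm[ft]{\kk}}^{\times} \to \spc{}^{\times}$, and only after passing to opposites do you land in the coCartesian structures $\prns{\sm[ft]{\kk}}\scr{\mr{op},\amalg}{} \to \spc{}\scr{\mr{op},\amalg}{}$ that you actually use, so the intermediate sentence labelling $\mr{an}$ as monoidal for the coCartesian structures on $\sm[ft]{\kk}$ and $\spc{}$ should be fixed; and since $\sm[ft]{\kk}$ contains non-separated schemes, the finite-CW claim should be justified by a finite Zariski cover and Mayer--Vietoris (reducing to the separated case) rather than by triangulation alone\textemdash the coherent lax structure itself is cleanest obtained from \cite[2.4.3.18]{Lurie_higher-algebra} via the cotensoring of $\calg{\D{\mod{\ab}_{\bs{\Lambda}}}\scr{\otimes}{}}$ over $\spc{}\op$.
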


\begin{proof}
Here, $\D{\mod{\ab}_{\bs{\Lambda}}}\scr{\otimes}{}$ denotes the stable locally $\mf{U}$-presentable symmetric monoidal qcategory underlying the projective model structure on $\cpx{\mod{\ab}_{\bs{\Lambda}}}\scr{\otimes}{}$ (\cite[4.2.13]{Hovey_model-categories}).
Let $\DA{\kk, \Lambda}\scr{\otimes}{}$ denote the stable locally $\mf{U}$-presentable symmetric monoidal qcategory underlying the $\mf{U}$-combinatorial symmetric monoidal model category used to define $\mb{DA}\prns{S, \bs{\Lambda}}$ in \cite[3.3]{Ayoub_realisation-etale} for $S = \spec{\kk}$.
This is a $\bs{\Lambda}$-linear variant of the construction of the $\mb{P}^1$-stable $\mb{A}^1$-homotopy category $\SH{\kk}$ of \cite[5.7]{Voevodsky_A1-homotopy-theory}.
This category is equipped with a canonical symmetric monoidal functor $\Sigma^{\infty}_{\mb{T}}: \prns{\sm[ft]{\kk}}^{\times} \to \DA{\kk, \bs{\Lambda}}\scr{\otimes}{}$ whose essential image is spanned by $\aleph_0$-presentable objects (\cite[1.3]{Riou_dualite-de-spanier-whitehead}).
From the symmetric monoidal Quillen equivalences constructed in \cite[\S1]{Ayoub_operations-de-Grothendieck}, we deduce a $\mf{U}$-cocontinuous symmetric monoidal functor $\varrho^{*, \otimes}: \DA{\kk, \bs{\Lambda}}\scr{\otimes}{} \to \D{\mod{\ab}_{\bs{\Lambda}}}\scr{\otimes}{}$. 
By the results of \cite[\S3]{Ayoub_operations-de-Grothendieck}, $\varrho^*$ preserves $\aleph_0$-presentable objects and, at the level of underlying homotopy categories, $\varrho^*$ is compatible with Grothendieck's six operations.
By \cite[1.4, 2.2]{Riou_dualite-de-spanier-whitehead}, since $\kk$ admits resolution of singularities (\cite{Hironaka_resolution-of-singularities}), the full subqcategory $\DA{\kk, \bs{\Lambda}}\scr{}{\aleph_0} \subseteq \DA{\kk, \bs{\Lambda}}$ is equal to the full subqcategory spanned by the $\otimes$-dualizable objects. 
Combining these results with \ref{int.6}, we obtain a symmetric monoidal functor $\Gamma^{\otimes}_{\mr{Betti}}$ given by the composite
\[
\prns{\sm[ft]{\kk}}\scr{\mr{op}, \amalg}{} 
\xrightarrow{\Sigma^{\infty,\mr{op}}_{\mb{T}}}
\prns{\DA{\kk, \bs{\Lambda}}\scr{}{\aleph_0}}\scr{\mr{op}, \otimes}{}
\isom
\DA{\kappa, \bs{\Lambda}}\scr{\otimes}{}
\xrightarrow{\varrho^{*, \otimes}}
\D{\mod{\ab}_{\bs{\Lambda}}}\scr{\otimes}{}.
\]
Unwinding the constructions and using the compatibility of $\varrho^*$ with Grothendieck's six operations, one finds that $\Gamma_{\mr{Betti}}\prns{X} \simeq f_*f^*\bs{\Lambda}_{X\prns{\mb{C}}^{\mr{an}}}$, where $f:X \to \spec{\kk}$ is the structural morphism and $\bs{\Lambda}_{X\prns{\mb{C}}^{\mr{an}}}$ is the constant sheaf associated to $\bs{\Lambda}$ on $X\prns{\mb{C}}^{\mr{an}}$.
Since $f_*f^*\bs{\Lambda}_{X\prns{\mb{C}}^{\mr{an}}}$ and the $\bs{\Lambda}$-linear singular cochain complex of $X\prns{\mb{C}}^{\mr{an}}$ are naturally equivalent as $\mb{E}_{\infty}$-algebras when the latter is equipped with the cup product, the claim follows.
\end{proof}

\begin{defn}
\label{int.8}
Let $\D{\ind{\mhs^{\mr{p}}_{\bs{\Lambda}}}}\scr{\otimes}{}$ denote the fiber product
\begin{equation}
\label{int.8.1}
\D{\ind{\mhs^{\mr{p}}_{\KK}}}\scr{\otimes}{} \times_{\D{\mod{\ab}_{\KK}}\scr{\otimes}{}} \D{\mod{\ab}_{\bs{\Lambda}}}\scr{\otimes}{}
\end{equation}
in $\calg{\pr^{\mr{L}, \otimes}}$, the qcategory of commutative algebras in the symmetric monoidal qcategory of locally $\mf{U}$-presentable qcategories and $\mf{U}$-cocontinuous functors \textup{(\cite[4.8.1.14]{Lurie_higher-algebra})}.
The functor $\mc{C}^{\otimes} \mapsto \calg{\mc{C}^{\otimes}}$ preserves fiber products, so, by \ref{int.5}, the qcategory underlying $\D{\ind{\mhs^{\mr{p}}_{\bs{\Lambda}}}}\scr{\otimes}{}$ is equivalent to $\D{\ind{\mhs^{\mr{p}}_{\bs{\Lambda}}}}$.
This circuitous construction of the symmetric monoidal structure on the derived category of $\ind{\mhs^{\mr{p}}_{\bs{\Lambda}}}$ facilitates the proof of \ref{int.9}.
Given any other more direct construction of this symmetric monoidal structure, it should not be difficult to show that it is equivalent to the above using the universal property of the fiber product \eqref{int.8.1}.
\end{defn}

\begin{cor}
\label{int.9}
There is a symmetric monoidal functor 
\[
\Gamma^{\otimes}_{\mr{Hdg}, \bs{\Lambda}}: 
\prns{\sch[ft]{\kk}}\scr{\mr{op}, \amalg}{} 
\to 
\D{\ind{\mhs^{\mr{p}}_{\bs{\Lambda}}}}\scr{\otimes}{}
\]
such that, for each $X \in \sch[ft]{\kk}$:
\begin{enumerate}
\item
$\h^r\Gamma_{\mr{Hdg}, \bs{\Lambda}}\prns{X}$ is naturally isomorphic to $\mr{H}^r_{\mr{Betti}}\prns{X, \bs{\Lambda}}$ equipped with the mixed Hodge $\bs{\Lambda}$-structure of \textup{\cite[8.2.1]{Deligne_hodgeIII}} for each $r \in \mb{Z}$; and
\item
when equipped with the $\mb{E}_{\infty}$-algebra structure induced by \textup{\cite[2.4.3.18]{Lurie_higher-algebra}}, the complex of $\bs{\Lambda}$-modules underlying $\Gamma_{\mr{Hdg},\bs{\Lambda}}\prns{X}$ is naturally equivalent to the $\bs{\Lambda}$-linear singular cochain complex of $X\prns{\mb{C}}^{\mr{an}}$ with the $\mb{E}_{\infty}$-algebra structure given by the cup product.
\end{enumerate}
\end{cor}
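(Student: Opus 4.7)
The strategy is to build $\Gamma^{\otimes}_{\mr{Hdg}, \bs{\Lambda}}$ via the universal property of the fiber product \eqref{int.8.1} defining $\D{\ind{\mhs^{\mr{p}}_{\bs{\Lambda}}}}\scr{\otimes}{}$ in \ref{int.8}: I need a symmetric monoidal functor into $\D{\ind{\mhs^{\mr{p}}_{\KK}}}\scr{\otimes}{}$, a symmetric monoidal functor into $\D{\mod{\ab}_{\bs{\Lambda}}}\scr{\otimes}{}$, and a coherent symmetric monoidal equivalence between their composites to $\D{\mod{\ab}_{\KK}}\scr{\otimes}{}$. The first is $\Gamma^{\otimes}_{\mr{Hdg}}$ of \ref{rect.6}.

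For the second, I would extend $\Gamma^{\otimes}_{\mr{Betti}}$ of \ref{int.7} from $\sm[ft]{\kk}$ to $\sch[ft]{\kk}$. Its restriction to $\sm[sft]{\kk}$ sends $X$ to the $\bs{\Lambda}$-linear singular cochain complex of $X\prns{\mb{C}}^{\mr{an}}$, hence is $\mb{A}^1$-invariant (as $\mb{A}^1\prns{\mb{C}}^{\mr{an}}$ is contractible) and Nisnevich-excisive (by the standard cohomological descent argument reviewed in the proof of \ref{rect.6}). Proposition \ref{rect.5} then yields the desired symmetric monoidal extension $\overline{\Gamma}^{\otimes}_{\mr{Betti}}: \prns{\sch[ft]{\kk}}\scr{\mr{op}, \amalg}{} \to \D{\mod{\ab}_{\bs{\Lambda}}}\scr{\otimes}{}$.

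For the coherence equivalence, let $\omega^{\otimes}: \D{\ind{\mhs^{\mr{p}}_{\KK}}}\scr{\otimes}{} \to \D{\mod{\ab}_{\KK}}\scr{\otimes}{}$ be the symmetric monoidal ``underlying $\KK$-module complex'' functor and $\beta^{\otimes}: \D{\mod{\ab}_{\bs{\Lambda}}}\scr{\otimes}{} \to \D{\mod{\ab}_{\KK}}\scr{\otimes}{}$ scalar extension. Both composites $\omega^{\otimes} \Gamma^{\otimes}_{\mr{Hdg}}$ and $\beta^{\otimes} \overline{\Gamma}^{\otimes}_{\mr{Betti}}$ are symmetric monoidal, $\mb{A}^1$-invariant, and $\mr{cdh}$-excisive; on $\sm[sft]{\kk}$ both compute the $\KK$-linear singular cochain complex of $X\prns{\mb{C}}^{\mr{an}}$, by tracing through the Navarro-Aznar construction underlying \ref{rect.2} in the first case and through \ref{int.7} in the second. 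By the essential uniqueness of the extension in \ref{rect.5} (obtained via the equivalence $\iota^*: \H{\sch[ft]{\kk}}_{\mr{cdh}} \isom \H{\sm[sft]{\kk}}_{\mr{scdh}}$), this natural symmetric monoidal equivalence on $\sm[sft]{\kk}$ propagates to $\sch[ft]{\kk}$. The universal property of the fiber product in $\calg{\pr^{\mr{L}, \otimes}}$ then produces $\Gamma^{\otimes}_{\mr{Hdg}, \bs{\Lambda}}$.

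Claim (ii) is immediate from the construction: the projection of $\Gamma^{\otimes}_{\mr{Hdg}, \bs{\Lambda}}$ to $\D{\mod{\ab}_{\bs{\Lambda}}}\scr{\otimes}{}$ is $\overline{\Gamma}^{\otimes}_{\mr{Betti}}$, and its value on any $X$ is equivalent as an $\mb{E}_{\infty}$-algebra to the $\bs{\Lambda}$-linear singular cochain complex of $X\prns{\mb{C}}^{\mr{an}}$ by \ref{int.7} and the Nisnevich/cdh-descent argument. For (i), \ref{int.5}$(ii)$ combined with \ref{int.3} and \ref{int.4} shows that the natural $\trun$-structure on $\D{\ind{\mhs^{\mr{p}}_{\bs{\Lambda}}}}$ agrees with the one transferred from the fiber product and has heart equivalent to $\mhs^{\mr{p}}_{\KK} \times_{\mod{\ab}_{\KK}\scr{}{\aleph_0}} \mod{\ab}_{\bs{\Lambda}}\scr{}{\aleph_0}$; hence $\h^r \Gamma_{\mr{Hdg}, \bs{\Lambda}}\prns{X}$ is determined by Deligne's mixed Hodge $\KK$-structure on $\mr{H}^r_{\mr{Betti}}\prns{X, \KK}$ (from \ref{rect.6}), the $\bs{\Lambda}$-module $\mr{H}^r_{\mr{Betti}}\prns{X, \bs{\Lambda}}$, and the canonical comparison after tensoring with $\KK$, which together constitute Deligne's mixed Hodge $\bs{\Lambda}$-structure. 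The principal obstacle is the coherence argument of paragraph three: one must compatibly identify two symmetric monoidal extensions of $\KK$-linear singular cohomology built by quite different routes—the Hodge route through Navarro-Aznar's construction, Beilinson's equivalence and the rectification of \ref{tannakian.10}, versus the motivic route through Ayoub's Betti realization of $\DA{\kk, \bs{\Lambda}}$.
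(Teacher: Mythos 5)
Your proposal is correct and takes essentially the same route as the paper: extend $\Gamma^{\otimes}_{\mr{Betti}}$ of \ref{int.7} to $\prns{\sch[ft]{\kk}}\scr{\mr{op},\amalg}{}$ via \ref{rect.5}, identify its composite to $\D{\mod{\ab}_{\KK}}\scr{\otimes}{}$ with that of $\Gamma^{\otimes}_{\mr{Hdg}}$ from \ref{rect.6} (both compute $\KK$-linear Betti cohomology), and conclude by the universal property of the fiber product \eqref{int.8.1}. Your extra detail on the uniqueness-of-extension coherence argument and on deducing claim $(i)$ from \ref{int.3}, \ref{int.4} and \ref{int.5} just makes explicit what the paper leaves implicit.
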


\begin{proof}
By \ref{rect.5} and the basic properties of Betti cohomology cited in the proof of \ref{rect.6}, the symmetric monoidal functor $\Gamma^{\otimes}_{\mr{Betti}}$ of \ref{int.7} extends to a symmetric monoidal functor $\prns{\sch[ft]{\kk}}\scr{\mr{op}, \amalg}{} \to \D{\mod{\ab}_{\bs{\Lambda}}}\scr{\otimes}{}$, abusively denoted by $\Gamma^{\otimes}_{\mr{Betti}}$, computing the $\bs{\Lambda}$-linear Betti cohomology of each $X \in \sch[ft]{\kk}$.
After composition with the evident symmetric monoidal functors $\D{\ind{\mhs^{\mr{p}}_{\KK}}}\scr{\otimes}{} \to \D{\mod{\ab}_{\KK}}\scr{\otimes}{}$ and $\D{\mod{\ab}_{\bs{\Lambda}}}\scr{\otimes}{} \to \D{\mod{\ab}_{\KK}}\scr{\otimes}{}$, respectively, the symmetric monoidal functors $\Gamma^{\otimes}_{\mr{Hdg}}$ of \ref{rect.6} and $\Gamma^{\otimes}_{\mr{Betti}}$ become equivalent: both compute $\KK$-linear Betti cohomology.
The claim now follows from the universal property of the fiber product \eqref{int.8.1}.
\end{proof}

\begin{rmk}
\label{int.10}
As mentioned in the introduction, from the symmetric monoidal functor $\Gamma^{\otimes}_{\mr{Hdg},\bs{\Lambda}}$, one may construct a motivic $\mb{E}_{\infty}$-ring spectrum representing $\bs{\Lambda}$-linear absolute Hodge cohomology (\cite[\S5]{Beilinson_absolute-hodge}).
This observation will be exploited and explored further in a forthcoming preprint. 
\end{rmk}

\allsectionsfont{\center\fontseries{m}\scshape}
\bibliographystyle{alpha}
\bibliography{all}
\end{document}